\documentclass{article}

\usepackage{arxiv}

\usepackage[utf8]{inputenc} 
\usepackage[T1]{fontenc}    
\usepackage{hyperref}       
\usepackage{url}            
\usepackage{booktabs}       
\usepackage{amsfonts}       
\usepackage{nicefrac}       
\usepackage{microtype}      
\usepackage{xcolor}         
\usepackage{float}
\usepackage{algorithm}

\usepackage{graphicx}
\usepackage{natbib}
\usepackage{doi}
\usepackage{amsmath}
\usepackage{amssymb}
\usepackage{mathtools}
\usepackage{amsthm}
\usepackage{booktabs}
\usepackage{array}
\usepackage{multirow}
\usepackage{makecell}
\graphicspath{{images/}}
\usepackage{comment}

\usepackage{algorithm}
\usepackage{algpseudocode}

\newcommand{\red}[1]{\textcolor{black}{#1}}

\newcommand{\R}{\mathbb{R}}
\newcommand{\E}{\mathbb{E}\,}
\newcommand{\norm}[1]{\left\|
#1\right\|}
\def\wtgg{\widetilde{\nabla}}

\newcommand{\inner}[2]{\left\langle #1, #2\right\rangle}

\DeclareMathOperator*{\argmin}{arg\,min}

\providecommand{\E}{\mathbb{E}}

\newtheorem{theorem}{Theorem}
\newtheorem{proposition}{Proposition}
\newtheorem{lemma}{Lemma}
\newtheorem{corollary}{Corollary}

\theoremstyle{definition}
\newtheorem{definition}{Definition}
\newtheorem{assumption}{Assumption}

\theoremstyle{remark}
\newtheorem{remark}{Remark}

\title{Wall-Clock Complexity for Zeroth-Order Optimization with Tunable Oracle Fidelity}

\author{ Alexandra Suvorikova$^{*,\dagger,1}$ \quad Igor Pavlov$^{*, \dagger, 2}$ \quad Artem Vasin$^{2}$ \quad Georgii Bychkov$^{3}$\\
\textbf{Anastasia Antsiferova$^{3}$ \quad  Darina Dvinskikh$^{4}$ \quad Alexander Gasnikov$^{2,5,6}$}\\[0.5em] $^{1}$Weierstrass Institute for Applied Analysis and Stochastics, Berlin, Germany\\ 
$^{2}$Moscow Independent Research Institute of Artificial Intelligence, Moscow, Russia\\
$^{3}$MSU Institute for Artificial Intelligence, Moscow,
Russia\\
$^{4}$HSE University, Moscow, Russia\\
$^{5}$Trusted AI Research Center, RAS, Moscow, Russia\\
$^{6}$Innopolis University, Kazan, Russia
}

\begin{document}
\maketitle

\begingroup \renewcommand{\thefootnote}{\fnsymbol{footnote}} \setcounter{footnote}{0} \footnotetext[1]{Equal contribution.} \footnotetext[2]{Correspondence: \texttt{suvorikova@wias-berlin.de}, \texttt{1g0rp4vl@gmail.com}.} \endgroup

\begin{abstract}
 Zeroth-order (black-box) optimization is applied when gradients are unavailable and objective evaluations rely on expensive simulations. In many such applications, the oracle fidelity is tunable: higher-accuracy queries reduce noise but incur higher computational costs. To capture this trade-off, we study an accuracy-aware wall-clock model where each query with fidelity $\delta$ has a cost $c(\delta)$, and we minimize the total time $T_{\mathrm{total}} = \sum_{k=1}^{N} c(\delta_k)$, subject to a target accuracy constraint. We show how the choice of oracle type, noise model, and optimization scheme induces explicit wall-clock-optimal choices for the algorithmic parameters. For instance, we demonstrate that accelerated methods can be wall-clock inferior to non-accelerated schemes. Furthermore, we characterize the conditions under which a constant fidelity strategy is optimal in the Big-O sense. Our framework provides a unified methodology to translate convergence guarantees into practical fidelity and batching recommendations.
\end{abstract}


\section{Introduction}

Black-box optimization (BBO) plays a central role in modern applications, ranging from simulation-based engineering to adversarial machine learning \citep{kadowaki2022lossy, williams2023black, liu2025simulator}. This setting arises when the objective is defined through a numerical simulation or an algorithmic procedure. 
Since gradients are unavailable, one usually relies on an oracle that returns approximate values. 

A key feature of BBO is that the oracle's error is controllable. In practice, the oracle's fidelity $\delta$ serves as a tunable parameter, determined by the computational budget allocated to each query. For example, in experimental physics, the oracle fidelity depends on the number of Monte Carlo particle trajectories \cite{shirobokov2020black}; in molecular discovery, the number of internal search restarts impacts the oracle fidelity \cite{hoffman2022optimizing}. Another example is supervised PageRank learning, where estimating the stationary distribution of a parameterized Markov chain by MCMC yields a tunable noisy function-value oracle with cost proportional to $\delta^{-2}$ \cite{bogolubsky2016learning}.

\emph{Focusing on Zeroth-Order (ZO) optimization as a primary framework for such problems, we address this limitation by adopting a wall-clock perspective.} We explicitly model the dependency between oracle accuracy, iteration count, and total computational time. Specifically, we introduce an accuracy-aware complexity framework in which each oracle query is associated with a cost function $c(\delta)$ that depends on the desired fidelity $\delta$. A similar ``tunable oracle'' framework has been recently proposed for first-order convex optimization, deriving optimal inexactness schedules to minimize computational budget \cite{van2024optimal}. We extend this wall-clock perspective to the zeroth-order setting, where the trade-off governs not only the gradient quality but also the function evaluation itself.
\textbf{Our contributions are as follows}:
\begin{itemize}
\item \textbf{A wall-clock framework for tunable-fidelity ZO.} We recast zeroth-order optimization as a joint design problem over the iteration count $N$ and the per-query fidelity schedule $\{\delta_k\}$, under a polynomial cost-fidelity dependence $c(\delta) \propto \delta^{-\gamma}$. We show this is the only regime that produces a non-trivial trade-off.
\item \textbf{A master lemma turning convergence bounds into wall-clock-optimal designs}. We isolate a structural property---fidelity separability---shared by various methods, and give closed-form optimal schedules in one step (Prop.~\ref{lemma:master}). We also pin down exactly when a uniform schedule is already optimal up to constants.
\item \textbf{The role of $\gamma$ depends on the noise model.} Under adversarial noise, $\gamma$ selects the algorithm (acceleration can hurt once $\gamma \geq 1$) but doesn't tune parameters within it. Under Tsybakov noise, the opposite: $\gamma$ tunes the parameters, separating standard tuning from overbatching.
\item 
\textbf{An intermediate gradient method (IGM)}. We propose a new first-order method (Alg.~\ref{alg re-agm}) interpolating between GM and FGM, and prove its convergence under inexact gradients with time-varying oracle inexactness (Thm.~\ref{igm convergence}). IGM is what enables the application of our master lemma to the adversarial setting (Prop.~\ref{prop:igm_adversarial_various}).

\item \textbf{End-to-end recipes.} We illustrate and validated empirically the framework.
\end{itemize}

\textbf{Paper organization.} Sec.~\ref{sec:problem_setup} introduces the wall-clock model for tunable-fidelity zeroth-order
oracles and formulates the fidelity-allocation problem. Sec.~\ref{sec:related_work}
explains how standard ZO convergence guarantees can be converted into
the fidelity-separable form. Sec.~\ref{sec:adversarial_noise} and~\ref{sec:tsybakov} illustrate the framework under
adversarial and Tsybakov noise, respectively. Sec.~\ref{sec:experiments} provides experimental results. Finally, Sec.~\ref{sec:discussion} discusses the main
takeaways and limitations. All missing proofs and additional experimental
details are deferred to the appendix.

\textbf{Notations.}
For an integer \(N\ge 1\), we write \([N]:=\{1,\ldots,N\}\). The Euclidean norm
is denoted by \(\|\cdot\|\), and \(\langle \cdot,\cdot\rangle\) denotes the inner product. We use
\(\mathcal F_k\) for the filtration generated by the algorithm up to iteration
\(k\). The terms $O(\cdot)$ and $\tilde{O}(\cdot)$ suppress constant and log factor, respectively.

\section{Wall-clock complexity}
\label{sec:problem_setup}
We consider the problem of minimizing an objective function $\min_{x\in \Omega}f(x)$, where $f: \Omega \to \mathbb{R}$. We assume that the algorithm accesses $f$ through a noisy black-box oracle: upon receiving a query $x$, the oracle returns an estimate corrupted by an error $\xi$,
\begin{equation}
\label{eq:noisy_oracle}
\hat{f}(x) = f(x) + \xi.
\end{equation}
\emph{We assume the oracle is tunable: it is characterized by a fidelity parameter $\delta > 0$ (representing the inexactness level) which controls the accuracy of the estimation.} To formalize the precise dependence of $\xi(x)$ on $\delta$, let $\mathcal{F}_{k}$ denote the filtration representing the algorithm's history up to step $k$, and let the current iterate $x_k$ be measurable with respect to $\mathcal{F}_{k}$. We consider two widely used models for the oracle inexactness that fit this framework: stochastic and deterministic noise.
\begin{definition}[Tsybakov noise]
\label{asm:tsybakov_noise} The noise satisfies $\mathbb{E}[\xi^2 \mid \mathcal{F}_{k}] \le \delta^2$. If the algorithm queries $x_k$ multiple times, the respective noise realizations are mutually independent conditionally on $\mathcal{F}_{k}$.
\end{definition}
\begin{definition}[Adversarial noise]
\label{asm:adv_noise}
The noise is determined by a deterministic (but unknown) function of the queried point bounded by the fidelity level, $\xi = \xi(x_k)$ and  $|\xi(x_k)| \le \delta$.
\end{definition}
Crucially, we assume a direct trade-off between accuracy and computation: a smaller $\delta$ yields less noise but incurs a higher per-query cost $c(\delta)$. Let $\{(x_k, \delta_k)\}_{k\in [N]}$ be a sequence of queries performed by the algorithm. The wall-clock time is 
\begin{equation}
T_{\mathrm{total}}(N, \{\delta_k\}_{k\in [N]}) := \sum^N_{k=1} c(\delta_k).  \label{def:wall_clock_time}
\end{equation}

\begin{remark}[Batching]
\label{rem:batching}
Let $g(x,h,r,\zeta)$ denote a single gradient estimate constructed from the
noisy oracle~\eqref{eq:noisy_oracle}. For a batch size $B\ge 1$, the batched
gradient estimate at iteration $k$ is defined as $   g_k^{(B)}
    :=
    \frac{1}{B}\sum_{i=1}^{B}
    g(x_k,h_k,r_{k,i},\zeta_{k,i}),$
where, conditionally on $x_k$, the random variables
$\{(r_{k,i},\zeta_{k,i})\}_{i=1}^{B}$ are independent and identically
distributed.

The averaging reduces variance by $1/B$ but leaves bias unchanged, while wall-clock time scales as $T_{\mathrm{total}}=B\sum_{k=1}^{N}c(\delta_k)$. Our master analysis treats $B$ as fixed, but case studies (Sec.~\ref{sec:adversarial_noise} and~\ref{sec:tsybakov}) optimize $(N, B, \{\delta_k\})$, showing phase transitions between $B=1$ and $B \gg 1$.
\end{remark}

\textbf{Choice of $c(\delta)$.} We assume the computational cost of a single query scales polynomially with the fidelity as $c(\delta) \propto \delta^{-\gamma}$, $\gamma > 0$. Power-law costs naturally fit BBO, as oracle costs scale as a power of accuracy: e.g., a Monte Carlo estimator with $M$ samples achieves $\delta\sim M^{-1/2}$ via the central limit theorem, yielding $c(\delta)\sim\delta^{-2}$. Beyond empirics, the power-law is unique among non-decreasing cost models. Consider three natural regimes:
\textbf{(i)} $c(\delta)=\Theta(\ln(1/\delta))$,
\textbf{(ii)} $c(\delta)=\Theta(\delta^{-\gamma})$, and
\textbf{(iii)} $c(\delta)=e^{\Theta(1/\delta)}$.
In~(i) improving fidelity is free, so one drives $\delta$ as
small as possible and $T_{\mathrm{total}}$ reduces to $N$ up to log-factors. In~(iii) any fidelity gain is expensive, so one
uses the coarsest admissible $\delta$ and compensates by increasing $N$.
Both regimes collapse the joint design of $(N,\{\delta_k\})$ into a
trivial one-parameter problem. Only~(ii) produces a trade-off in
which the optimal fidelity depends on the problem parameters, and the
exponent $\gamma$ itself drives qualitative phase transitions in the
wall-clock-optimal algorithm as shown in Sec.~\ref{sec:adversarial_noise} and \ref{sec:tsybakov}.

\textbf{Wall-clock optimization problem.} To connect $T_{\mathrm{total}}$ with the algorithm's choice, we use a generic convergence metric $\mathrm{Gap}_N$. For instance, $\mathrm{Gap}_N$ can be  $\mathbb{E}[f(x_N)-f(x^*)]$ for convex objectives, or $\frac{1}{N}\sum_{k=1}^N \mathbb{E}[\|\nabla f(x_k)\|^2]$ for non-convex ones. Usually $\mathrm{Gap}_N$ is bounded by a deterministic function $\mathcal{E}$ depending on the iteration count $N$, the fidelities $\{\delta_k\}_k$, and the algorithmic hyperparameters $\Theta$,
\begin{equation}
\label{eq:gap_bound}
\mathrm{Gap}_N \le \mathcal{E}\big(N, \{\delta_k\}_{k\in [N]}, \Theta\big).
\end{equation}
An example is ZO-GD under the adversarial noise model (Def.~\ref{asm:adv_noise}). If $\delta_k = \delta$, the gap $\mathrm{Gap}_N := f(x_N) - f^*$ is bounded by $\mathcal{E}(N, \delta, \Theta) = \frac{c_1}{N} + c_2 \delta^2$, with constants $c_1, c_2$ depending on $\Theta$ (e.g., step size and Lipschitz constant).

One achieves accuracy  $\mathcal{E}\big(N, \{\delta_k\}_{k\in [N]}, \Theta\big) \le \varepsilon$ through various parameter configurations resulting in a different cumulative wall-clock time $T_{\mathrm{total}}$. So, we formulate the following optimization problem,
\begin{align}
    &\quad \min_{N, \{\delta_k\}_{k\in [N]}} T_{\mathrm{total}}(N, \{\delta_k\}_{k\in [N]}) ~~~
    \text{subject to}~~~\mathcal{E}(N,\{\delta_k\}_{k\in [N]}, \Theta) \le \varepsilon. \nonumber
\end{align}

\subsection{Optimal Fidelity Allocation}
\label{sec:master_lemma}
Standard ZO analyses derive error bounds $\mathcal{E}(N, \{\delta_k\}, \Theta)$ assuming uniform fidelity ($\delta_k = \delta$). 
\emph{Sec.~\ref{sec:related_work} provides a methodology to extend these bounds to time-varying $\delta_k$}, revealing a common structural property we term \emph{fidelity-separability}.
\begin{definition}[Fidelity-separable bound]
\label{def:separable_error}
We say that the error bound $\mathcal{E}(N,\{\delta_k\}_{k\in[N]},\Theta)$ is \emph{fidelity-separable} if, for any $N$ and $\Theta$, it can be written as
\begin{equation*}
\mathcal{E}(N,\{\delta_k\}_{k\in[N]},\Theta)
=
\mathcal{E}_0(N, \Theta)+\sum_{k=1}^N \alpha_k(N,\Theta)\,\delta_k^p,
\end{equation*}
with $\mathcal{E}_0(\cdot)$ and $\alpha_k(\cdot)$ depending on the assumptions on $f$ and the optimization method.
\end{definition}
\begin{remark}
\label{remark:channels}
Many algorithms admit the decomposition of Def.~\ref{def:separable_error} either directly or through a finite sum of fidelity-separable terms with different exponents: $\mathcal{E}(\cdot) = \mathcal{E}_0(\cdot) + \sum_{p\in\mathcal P}\sum_{k=1}^N \alpha_{k,p} \delta_k^p$. We refer to each $\sum_{k=1}^N \alpha_{k,p} \delta_k^p$ as a \textit{fidelity channel}. In such cases, the wall-clock analysis reduces to determining which channel governs the complexity in the high-precision regime.
\end{remark}
The following result provides the closed-form solution for the optimal schedule and the resulting time complexity. Appx.~\ref{appx:proofs_master} contains all proofs for this section.
\begin{proposition}\label{lemma:master}
Fix $N$ and let $\mathcal{E}$ be as in Def.~\ref{def:separable_error}. For sufficiently large $N$ such that  $\mathcal{E}_0(N, \Theta) \le \varepsilon - \epsilon$ for some $\epsilon > 0$, the optimal fidelity allocation $\delta_i^*(N)$ and the total wall-clock time $T_{\mathrm{total}}(N)$ are
\begin{equation}
A_N := \sum_{j=1}^N \alpha_j^{\frac{\gamma}{p+\gamma}}(N, \Theta), \quad
\delta_i^*(N) = \left( \frac{\epsilon}{A_N} \right)^{\frac{1}{p}} \alpha_i^{-\frac{1}{p+\gamma}}(N, \Theta), \quad T_{\mathrm{total}}(N) = \epsilon^{-\frac{\gamma}{p}}  A_N^{\frac{p+\gamma}{p}}.
\label{eq:time_adapt}
\end{equation}
\end{proposition}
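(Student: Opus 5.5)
Fix $N$ and $\Theta$, and write $\alpha_k:=\alpha_k(N,\Theta)>0$. I read the statement as follows: after spending $\mathcal{E}_0(N,\Theta)\le\varepsilon-\epsilon$ on the deterministic part, the fidelity budget $\epsilon$ remains for the noise terms. So I would solve the convex program
\[
\min_{\delta_1,\dots,\delta_N>0}\ \sum_{k=1}^N \delta_k^{-\gamma}\quad\text{subject to}\quad \sum_{k=1}^N \alpha_k\delta_k^p\le \epsilon ,
\]
absorbing the proportionality constant in $c(\delta)\propto\delta^{-\gamma}$. The substitution $u_k:=\alpha_k\delta_k^p$ gives objective $\sum_k \alpha_k^{\gamma/p}u_k^{-\gamma/p}$ and constraint $\sum_k u_k\le\epsilon$. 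The objective is strictly convex and decreasing in each $u_k$, and the constraint set is convex. Hence any minimizer saturates the constraint, and a KKT point is the unique global minimizer. This settles existence, uniqueness and optimality without further work.

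\textbf{Step 1: the stationarity condition.} I would introduce a Lagrange multiplier $\lambda>0$ for the constraint $\sum_k\alpha_k\delta_k^p=\epsilon$. Differentiating in $\delta_i$ gives $\gamma\delta_i^{-\gamma-1}=\lambda p\,\alpha_i\delta_i^{p-1}$, so $\delta_i^{p+\gamma}\propto\alpha_i^{-1}$, that is, $\delta_i=\kappa\,\alpha_i^{-1/(p+\gamma)}$ for a common constant $\kappa$.

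\textbf{Step 2: fixing $\kappa$.} Substituting into the active constraint gives $\kappa^p\sum_j\alpha_j^{1-p/(p+\gamma)}=\kappa^pA_N=\epsilon$, since $1-p/(p+\gamma)=\gamma/(p+\gamma)$. Therefore $\kappa=(\epsilon/A_N)^{1/p}$, which is exactly $\delta_i^*(N)$ in \eqref{eq:time_adapt}.

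\textbf{Step 3: the cost.} Plugging in,
\[
T_{\mathrm{total}}(N)=\sum_i\kappa^{-\gamma}\alpha_i^{\gamma/(p+\gamma)}=(\epsilon/A_N)^{-\gamma/p}A_N=\epsilon^{-\gamma/p}A_N^{(p+\gamma)/p}.
\]

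As an independent check, and as a fully self-contained alternative to KKT, I would prove the lower bound via Hölder's inequality. Apply Hölder with exponents $r=(p+\gamma)/\gamma$ and $s=(p+\gamma)/p$ to the factorization
\[
\alpha_k^{\gamma/(p+\gamma)}=\bigl(\delta_k^{-\gamma}\bigr)^{1/r}\bigl(\alpha_k\delta_k^p\bigr)^{1/s},
\]
which holds because $-\gamma/r+p/s=-\gamma p/(p+\gamma)+\gamma p/(p+\gamma)=0$. This yields
\[
A_N\le\Bigl(\sum_k\delta_k^{-\gamma}\Bigr)^{\gamma/(p+\gamma)}\epsilon^{p/(p+\gamma)}
\]
for every feasible schedule, which rearranges to $\sum_k\delta_k^{-\gamma}\ge\epsilon^{-\gamma/p}A_N^{(p+\gamma)/p}$. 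Equality holds exactly when $\delta_k^{-\gamma}\propto\alpha_k\delta_k^p$, which is the schedule found in Step 1.

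The only real subtlety is interpretive rather than technical: deciding that the constraint is $\sum_k\alpha_k\delta_k^p\le\epsilon$, as opposed to $\le\varepsilon-\mathcal{E}_0$. With the Hölder route this just means carrying $\epsilon$ through as the residual budget, and the argument is otherwise routine.
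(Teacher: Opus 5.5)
Your main argument (Steps 1--3) is the same Lagrangian stationarity computation the paper uses. Your reading of the constraint as $\sum_k\alpha_k\delta_k^p\le\epsilon$ is the one that produces the displayed formulas; the paper's proof writes $\varepsilon$ there, which is a notational slip. The substitution $u_k=\alpha_k\delta_k^p$, which turns the problem into a strictly convex program with a linear constraint, is a genuine addition. The paper only writes down the first-order condition and never argues that the critical point is the global minimizer or that the constraint is active, and your remark supplies exactly that.

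The H\"older ``independent check'' is wrong as written, though the repair is easy: the conjugate exponents are swapped. With $r=(p+\gamma)/\gamma$ and $s=(p+\gamma)/p$, the product is $(\delta_k^{-\gamma})^{1/r}(\alpha_k\delta_k^p)^{1/s}=\alpha_k^{p/(p+\gamma)}\delta_k^{p-\gamma}$, not $\alpha_k^{\gamma/(p+\gamma)}$. Your exponent check evaluates $-\gamma/r$ as $-\gamma p/(p+\gamma)$, so it silently uses $1/r=p/(p+\gamma)$. Moreover, the inequality you display, $A_N\le(\sum_k\delta_k^{-\gamma})^{\gamma/(p+\gamma)}\epsilon^{p/(p+\gamma)}$, rearranges to $\sum_k\delta_k^{-\gamma}\ge\epsilon^{-p/\gamma}A_N^{(p+\gamma)/\gamma}$. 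That is not the claimed bound unless $p=\gamma$.

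The fix is to put $r=(p+\gamma)/p$ on $\delta_k^{-\gamma}$ and $s=(p+\gamma)/\gamma$ on $\alpha_k\delta_k^p$. Then the factorization holds, and H\"older gives $A_N\le(\sum_k\delta_k^{-\gamma})^{p/(p+\gamma)}\epsilon^{\gamma/(p+\gamma)}$. Raising to the power $(p+\gamma)/p$ yields $\sum_k\delta_k^{-\gamma}\ge\epsilon^{-\gamma/p}A_N^{(p+\gamma)/p}$, with the same equality case $\delta_k^{p+\gamma}\propto\alpha_k^{-1}$. Corrected this way, the H\"older route is a short, calculus-free proof of global optimality, which the paper's Lagrangian computation alone does not give.
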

\begin{remark}In practice, one may require $c(\delta) = \max\{1, \delta^{-\gamma}\}$ to prevent the per-query cost from vanishing. However, this does not alter the asymptotic scaling of $T_{\mathrm{total}}$ (see Lem.~\ref{lemma:master_aux}).
\end{remark}

Prop.~\ref{lemma:master} assumes that
$\delta_k$ can be reset at every iteration at no extra cost. In
practice, changing the fidelity often incurs overhead (re-seeding a
Monte Carlo simulator, rebuilding an inner solver, recalibrating a
physical oracle), making piecewise-constant or uniform schedules
preferable. Cor.~\ref{corr:uniform} bounds the wall-clock penalty of
restricting to a uniform schedule; analysis of \eqref{eq:benifits} shows that the penalty is at most constant-order whenever the coefficients $\{\alpha_j\}$ are mildly varying.
\begin{corollary}\label{corr:uniform}Let $\tilde{A}_N :=\sum_{i=1}^N \alpha_i(N, \Theta) $. Under the assumptions of Prop.~\ref{lemma:master}, for a uniform fidelity schedule $\delta_k = \delta$, the optimal level $\delta^*$ and the resulting total wall-clock time $T^{unif}_{\mathrm{total}}$ are
\begin{equation*}\delta^* = \left( \frac{\epsilon}{\tilde{A}_N} \right)^{\frac{1}{p}}, \quad T^{\mathrm{unif}}_{\mathrm{total}}(N) = N \epsilon^{-\frac{\gamma}{p}} \tilde{A}_N ^{\frac{\gamma}{p}}.
\end{equation*}

\end{corollary}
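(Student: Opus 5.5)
The corollary is a restricted version of the problem solved in Prop.~\ref{lemma:master}, and I would prove it directly. Fix $N$ and impose $\delta_k=\delta$ for all $k\in[N]$. As in the proposition, I read the accuracy constraint as allotting a budget $\epsilon$ to the fidelity channel. Under the uniform schedule the constraint becomes
\[
\mathcal{E}_0(N,\Theta)+\delta^p\tilde{A}_N\le\varepsilon ,
\]
and it is enforced through the sufficient condition $\delta^p\tilde{A}_N\le\epsilon$, since $\mathcal{E}_0\le\varepsilon-\epsilon$. The objective is $T_{\mathrm{total}}=\sum_{k=1}^N c(\delta)=N\delta^{-\gamma}$, with $c(\delta)=\delta^{-\gamma}$ normalized as in the proposition.

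The key steps are as follows.

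\textbf{Step 1.} Observe that $N\delta^{-\gamma}$ is strictly decreasing in $\delta>0$. The feasible set is the interval $0<\delta\le(\epsilon/\tilde{A}_N)^{1/p}$; this uses $\alpha_k\ge0$, implicit in Def.~\ref{def:separable_error}, so that $\tilde{A}_N>0$ unless the channel vanishes. The minimum is therefore attained at the right endpoint, $\delta^*=(\epsilon/\tilde{A}_N)^{1/p}$, where the constraint is active.

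\textbf{Step 2.} Substituting gives
\[
T^{\mathrm{unif}}_{\mathrm{total}}=N(\epsilon/\tilde{A}_N)^{-\gamma/p}=N\epsilon^{-\gamma/p}\tilde{A}_N^{\gamma/p}.
\]

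\textbf{Step 3 (consistency check).} Setting $\alpha_i\equiv\alpha$ in \eqref{eq:time_adapt} should recover this formula. In that case $A_N=N\alpha^{\gamma/(p+\gamma)}$, so
\[
A_N^{(p+\gamma)/p}=N^{(p+\gamma)/p}\alpha^{\gamma/p}=N(N\alpha)^{\gamma/p}=N\tilde{A}_N^{\gamma/p},
\]
which agrees. More generally, Hölder's inequality $A_N\le N^{p/(p+\gamma)}\tilde{A}_N^{\gamma/(p+\gamma)}$ gives $T_{\mathrm{total}}(N)\le T^{\mathrm{unif}}_{\mathrm{total}}(N)$, as it must, since the uniform schedule is feasible for the problem of Prop.~\ref{lemma:master}. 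This is the comparison that the discussion around \eqref{eq:benifits} presumably quantifies.

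There is no real obstacle here. The only point needing care is the interpretation of the constraint. Either one takes it with the fixed budget $\epsilon$, exactly as in Prop.~\ref{lemma:master}, or one takes the exact constraint with $\epsilon$ replaced by $\varepsilon-\mathcal{E}_0(N,\Theta)$. I would state the former for consistency with the proposition and remark that the latter yields the same formulas with that substitution. If the clipped cost $\max\{1,\delta^{-\gamma}\}$ is used instead, monotonicity still holds, so Step 1 is unchanged and only constants change, as noted via Lem.~\ref{lemma:master_aux}.
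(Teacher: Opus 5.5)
Your proof is correct and follows the paper's own argument. Since $N\delta^{-\gamma}$ is decreasing in $\delta$, the optimum is the largest admissible level, where $\delta^p\tilde A_N=\epsilon$ is active, and substituting gives $N\epsilon^{-\gamma/p}\tilde A_N^{\gamma/p}$; the paper makes exactly this step in the uniform part of the proof of Cor.~\ref{cor:large_N_explicit}, with $\epsilon=\varepsilon-\mathcal E_0(N,\Theta)$, and your H\"older check is the same Jensen comparison the paper uses to show $\Delta\ge 0$.
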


\textbf{High-precision regime.}
The large-$N$ assumption is natural in the high-precision regime:
as $\varepsilon\to 0$, the constraint $\mathcal{E}_0(N,\Theta)<\varepsilon$ with
$\mathcal{E}_0(N,\Theta)\asymp N^{-\beta}$ forces $N\gtrsim\varepsilon^{-1/\beta}\to\infty$.

\begin{corollary}[Large $N$]
\label{cor:large_N} Under Prop.~\ref{lemma:master}, assume
$\mathcal{E}_0(N,\Theta)\asymp N^{-\beta}$,
$A_N\asymp N^{\rho}$, with \(\beta>0\) and \(\rho>0\). Suppose that all $\delta_k < 1$. Then
\[
N^{*}\asymp \varepsilon^{-1/\beta},
\quad
\delta_k^{*}\asymp\varepsilon^{(1+\rho/\beta)/p}\,
\alpha_k(N^{*},\Theta)^{-1/(p+\gamma)},
\]
For the uniform schedule $\delta_k = \delta$ assume  $\tilde A_N\asymp N^{\sigma}$, with $\sigma \in \mathbb{R}$. If $p+\sigma\gamma>0$,
\begin{equation}
\label{eq:ihonestlydunno}
\widetilde N^{*}\asymp\varepsilon^{-1/\beta},
\quad
\delta^{*}\asymp\varepsilon^{(1+\sigma/\beta)/p}.
\end{equation}
\end{corollary}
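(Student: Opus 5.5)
The plan is to optimize over $N$ after fixing the slack. In Prop.~\ref{lemma:master} the slack $\epsilon$ is what remains after the deterministic part, so I take $\epsilon=\varepsilon-\mathcal{E}_0(N,\Theta)$. Then
\[
T_{\mathrm{total}}(N)=\bigl(\varepsilon-\mathcal{E}_0(N,\Theta)\bigr)^{-\gamma/p}A_N^{(p+\gamma)/p},
\]
and with the hypotheses $\mathcal{E}_0\asymp N^{-\beta}$ and $A_N\asymp N^{\rho}$ this gives
\[
T(N)\asymp (\varepsilon-cN^{-\beta})^{-\gamma/p}N^{\rho(p+\gamma)/p}
\]
on the feasible region $N>N_0:=(c/\varepsilon)^{1/\beta}\asymp\varepsilon^{-1/\beta}$. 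The argument is a two-sided comparison of this one-variable function near $N_0$.

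\textbf{Upper bound.} I take $N=2^{1/\beta}N_0$. Then $\epsilon=\varepsilon/2$, so $T\asymp\varepsilon^{-\gamma/p}\varepsilon^{-\rho(p+\gamma)/(p\beta)}$.

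\textbf{Lower bound.} Since $\epsilon<\varepsilon$ for every feasible $N$,
\[
T(N)\ge \varepsilon^{-\gamma/p}N^{\rho(p+\gamma)/p}\ge \varepsilon^{-\gamma/p}N_0^{\rho(p+\gamma)/p}.
\]
This matches the upper bound, so the optimal value is achieved in order at $N$ of order $\varepsilon^{-1/\beta}$. To pin down $N^*$ itself, I check two ranges. For $N\ge KN_0$ with $K$ large, $N^{\rho(p+\gamma)/p}$ exceeds the upper bound by the factor $K^{\rho(p+\gamma)/p}$. For $N$ close to $N_0$, the prefactor $(\varepsilon-cN^{-\beta})^{-\gamma/p}$ blows up. Hence every minimizer (or near-minimizer) satisfies $N^*\asymp\varepsilon^{-1/\beta}$ and $\epsilon^*\asymp\varepsilon$.

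\textbf{Optimal fidelities.} I substitute into the schedule of Prop.~\ref{lemma:master}:
\[
\delta_k^*=(\epsilon^*/A_{N^*})^{1/p}\alpha_k^{-1/(p+\gamma)} \asymp \bigl(\varepsilon\cdot\varepsilon^{\rho/\beta}\bigr)^{1/p}\alpha_k(N^*,\Theta)^{-1/(p+\gamma)},
\]
which is the claimed rate. The hypothesis $\delta_k<1$ only serves to stay in the regime where $c(\delta)=\delta^{-\gamma}$, rather than the floor $\max\{1,\delta^{-\gamma}\}$, is active, via Lem.~\ref{lemma:master_aux}.

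\textbf{Uniform schedule.} I repeat the argument with Cor.~\ref{corr:uniform}:
\[
T^{\mathrm{unif}}(N)\asymp(\varepsilon-cN^{-\beta})^{-\gamma/p}N^{1+\sigma\gamma/p}.
\]
The exponent $1+\sigma\gamma/p$ is positive exactly when $p+\sigma\gamma>0$. This is the main point to watch, because the monotonicity in $N$ that drives both bounds above requires a positive exponent. If it were nonpositive, $T$ would be nonincreasing in $N$, and the optimum would escape to infinity or be degenerate. Under $p+\sigma\gamma>0$ the same two-sided argument gives $\widetilde N^*\asymp\varepsilon^{-1/\beta}$, and then $\delta^*=(\epsilon^*/\tilde A_{\widetilde N^*})^{1/p}\asymp(\varepsilon\cdot\varepsilon^{\sigma/\beta})^{1/p}$.

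\textbf{Main obstacle.} The delicate step is the lower-bound and localization argument near the feasibility boundary $N_0$, where the $\asymp$ constants in $\mathcal{E}_0$ must be handled carefully. I would do this by sandwiching $c_1N^{-\beta}\le\mathcal{E}_0\le c_2N^{-\beta}$ and running the argument with $N_0$ defined from each constant.
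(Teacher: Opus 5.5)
Your argument is correct, but it follows a different route from the paper's proof.

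\textbf{The paper's route.} In the appendix (Cor.~\ref{cor:large_N_explicit}) the paper treats the asymptotics as exact, $\mathcal E_0=C_0N^{-\beta}$ and $A_N\sim C_A N^{\rho}$. It then takes $\log T^*(N)$ and solves the first-order condition. This gives the explicit interior optimizer $N^*=\bigl(C_0D_\gamma/(\rho(p+\gamma)\varepsilon)\bigr)^{1/\beta}$, with $D_\gamma=\rho(p+\gamma)+\beta\gamma$, and the exact budget split $\varepsilon-\mathcal E_0(N^*)=\tfrac{\beta\gamma}{D_\gamma}\varepsilon$. The uniform case is handled the same way with $\widetilde D_\gamma=p+\sigma\gamma+\beta\gamma$. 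There, $p+\sigma\gamma>0$ is invoked only to assert that the critical point is the minimizer.

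\textbf{Your route.} You sandwich the one-variable objective instead. The upper bound comes from a feasible test point ($N=2^{1/\beta}N_0$, slack $\varepsilon/2$). The lower bound combines $\epsilon<\varepsilon$ with monotonicity of $N^{\rho(p+\gamma)/p}$ (respectively $N^{1+\sigma\gamma/p}$).

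\textbf{What each buys.} The paper's calculation gives explicit constants. It reuses them for the prefactor $K_\gamma$ in $R_{\rm opt}$, and they mirror the explicit $\gamma$-dependent constants in Props.~\ref{prop:akhavan_walltime_batched} and~\ref{prop:acc_tsyb_design}. Strictly speaking, though, it differentiates a relation assumed only up to constants, and it works with a continuous relaxation of $N$. Your comparison argument yields only orders of magnitude. In exchange, it works directly under the $\asymp$ hypotheses as stated, applies to integer $N$ and to any near-minimizer, and shows exactly where $\rho>0$ and $p+\sigma\gamma>0$ enter.

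\textbf{One tightening.} Do not rely on the prefactor ``blowing up'' near $N_0$. Derive $\epsilon^*\gtrsim\varepsilon$ quantitatively by comparing $(\epsilon^*)^{-\gamma/p}A_{N^*}^{(p+\gamma)/p}$, using $A_{N^*}\gtrsim N_0^{\rho}$, against your upper bound. This also removes the ambiguity of defining $N_0$ with $c_1$ versus $c_2$.
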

\begin{remark}
The condition \(p+\sigma\gamma>0\) is automatic for \(\sigma\ge0\), but is
restrictive when \(\sigma<0\): then it requires $\gamma<\frac{p}{-\sigma}.$
If this fails, \eqref{eq:ihonestlydunno} does not apply; and the optimum is attained at \(\delta^*=1\).
\end{remark}

\textbf{Reading the exponents.}
The exponent~$\beta$ governs the
deterministic part of the bound and pins down the iteration budget
$N^{*}\asymp\varepsilon^{-1/\beta}$, which is identical for both schedules.
The exponent~$\sigma$ describes the cumulative growth of the fidelity
coefficients $\sum_{k}\alpha_{k}$ under the uniform schedule, while $\rho$
captures the same growth under the Lagrangian-optimal allocation in
Prop.~\ref{lemma:master}. The wall-clock benefit of time-varying fidelity
over a uniform one is therefore controlled by a single scalar gap:
\begin{equation}
\label{eq:benifits}
R_{\mathrm{opt}}(\varepsilon,\gamma)
:=\tfrac{T_{\mathrm{total}}(N^{*})}
{T_{\mathrm{total}}^{\mathrm{unif}}(\widetilde N^{*})}
\;\asymp\;
\varepsilon^{\,\Delta/(\beta p)},
\qquad
\Delta:=p+\sigma\gamma-\rho(p+\gamma).
\end{equation}

\textbf{When does a nonuniform schedule help?}
The term $\Delta$ is always non-negative. Indeed, with
$q:=\tfrac{\gamma}{p + \gamma}\in(0,1)$, Jensen's inequality applied to
$x\mapsto x^{q}$ gives
$A_N\le\;
N^{1-q}\Bigl(\sum_{j=1}^{N}\alpha_j\Bigr)^{q}
=N^{1-q}\,\tilde A_N^{\,q},$
which, after substituting $A_N\asymp N^{\rho}$, $\tilde A_N\asymp N^{\sigma}$,
yields $\rho(p+\gamma)\le p+\sigma\gamma$, i.e.\ $\Delta\ge 0$.
Hence the optimal nonuniform schedule is never asymptotically worse than the
uniform one. A \emph{polynomial} speedup ($\Delta>0$) requires Jensen's
inequality to be strict at the asymptotic level, which demands a
sufficiently concentrated coefficient sequence~$\{\alpha_j(N,\Theta)\}$.
\emph{For mildly decaying sequences such as $\alpha_j\asymp j^{-a}$,
$a\in(0,1)$, we get $\rho=1-aq$ and $\sigma=1-a$, so $\Delta=0$: in this regime, time-varying fidelity buys at most a
constant-factor improvement.} Polynomial gains arise when $\{\alpha_j\}$ is
heavy-tailed or spike-like. Appx.~\ref{app:optimized_ratio} provides the formal proof. Sec.~\ref{sec:exp_comparison} validates the result empirically.

\section{From ZO error bounds to fidelity-separable form}
\label{sec:related_work}
To apply Prop.~\ref{lemma:master} to a zeroth-order method, one must
show that its convergence bound $\mathcal{E}$ falls into the fidelity-separable
class (Def.~\ref{def:separable_error}). We establish this reduction
in three steps: we first fix the ingredients of a ZO setup; we then formulate a universal error
decomposition shared by a broad family of ZO methods; finally, we illustrate this using the
kernel two-point estimator. 
We begin with characterizing a ZO scenario by three interacting components. \textbf{(i) Oracle interface.} The way of constructing the gradient approximation $g_k$, e.g., one-point, symmetric or forward two-point, $\ell$-point, and
    kernel-smoothed finite differences.
\textbf{(ii) Noise model.} This work focuses on Tsybakov's noise (Def.~\ref{asm:tsybakov_noise}) or adversarial (Def.~\ref{asm:adv_noise}). \textbf{(iii) Algorithmic template.} The update rule---e.g.\ (accelerated) GD, SGD---which governs how estimation errors propagate across iterations.

Combinations of (i)--(iii) produce structurally different
bounds $\mathcal{E}(\cdot)$. Our goal is to show that these bounds share a common decomposition as in Def.~\ref{def:separable_error}. This is routed
through the conditional bias $b_k(x)$ and the second
moment $V_k(x)$ of a gradient estimate $g_k(x, \cdot)$. Specifically, we assume that there are such functions $b_k(x) \in \mathbb{R}^d$ and $V_k(x) > 0$, that
\[
\E[g_k(x, \cdot)\mid \mathcal F_k] :=\nabla f(x)+b_k(x), \qquad
\E[\|g_k(x, \cdot)\|^2\mid \mathcal F_k] \leq V_k(x).
\]
We next show that the fidelity-separable bound
(see Def.~\eqref{def:separable_error}, Rem.~\ref{remark:channels})
follows from the intermediate error decomposition below,
\begin{equation}
\label{eq:err_accumulation_2}
\mathrm{Gap}_N
\;\le\;
\tilde{\mathcal{E}}_0(N,\Theta)
\;+\; \sum_{k=1}^{N} \Bigl[ a_k(N,\Theta)\,\E[\|b_k(x_k)\|]
+ c_k(N,\Theta)\,\E[\|b_k(x_k)\|^{2}] +
e_k(N,\Theta)\,\E[V_k(x_k)]\Bigr],
\end{equation}
where $\tilde{\mathcal{E}}_0, a_k,c_k,e_k$ depend on the
algorithmic hyperparameters $\Theta$ and number of steps $N$.
\begin{proposition}
\label{lem:master_decomp}
Gradient descent, accelerated gradient descent, SGD
schemes satisfy \eqref{eq:err_accumulation_2}. Explicit
expressions for $(a_k,c_k,e_k)$ are collected in
Tab.~\ref{tab:drift_coefficients} in the Appendix.
\end{proposition}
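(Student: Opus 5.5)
The plan is to give one template argument per scheme: a one-step descent or Lyapunov inequality in which the estimator enters only through $\E[g_k\mid\mathcal F_k]=\nabla f(x_k)+b_k(x_k)$ and $\E[\|g_k\|^2\mid\mathcal F_k]\le V_k(x_k)$. We then telescope this inequality over $k\in[N]$ and read off $(a_k,c_k,e_k)$. Throughout we assume $f$ is $L$-smooth, and convex where a function-gap metric is used. The step size $\eta_k$, and the momentum sequence for the accelerated scheme, are collected in $\Theta$.

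\textbf{Gradient descent and SGD.} We use $x_{k+1}=x_k-\eta_k g_k$ (projected if $\Omega\neq\R^d$).

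For the non-convex metric, start from the descent lemma and take conditional expectation:
\[
\E[f(x_{k+1})\mid\mathcal F_k]\le f(x_k)-\eta_k\|\nabla f(x_k)\|^2-\eta_k\inner{\nabla f(x_k)}{b_k}+\tfrac{L\eta_k^2}{2}V_k .
\]
Young's inequality gives $|\inner{\nabla f}{b_k}|\le\tfrac12\|\nabla f\|^2+\tfrac12\|b_k\|^2$. Summing, taking total expectation and dividing by $\sum_k\eta_k/2$ yields \eqref{eq:err_accumulation_2} with $a_k=0$, $c_k\propto\eta_k/\sum_j\eta_j$, $e_k\propto L\eta_k^2/\sum_j\eta_j$, and $\tilde{\mathcal E}_0=2(f(x_1)-f^*)/\sum_j\eta_j$.

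For convex $f$, expand $\|x_{k+1}-x^*\|^2$ instead:
\[
\E\|x_{k+1}-x^*\|^2\le\|x_k-x^*\|^2-2\eta_k\inner{\nabla f(x_k)+b_k}{x_k-x^*}+\eta_k^2V_k .
\]
Then use convexity. The bias term is bounded by $2\eta_k\|b_k\|\,\|x_k-x^*\|\le 2\eta_k R\|b_k\|$, with $R$ a bound on the diameter or the iterate distance. This gives the linear channel $a_k\propto\eta_kR/\sum_j\eta_j$ and $e_k\propto\eta_k^2/\sum_j\eta_j$. Averaging the iterates, or using the last-iterate variant with $f(x_{k+1})\le f(x_k)+\dots$, gives the bound for $\mathrm{Gap}_N$.

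\textbf{Accelerated gradient descent.} Here we follow the inexact-oracle estimate-sequence analysis (Devolder--Glineur--Nesterov style) with coefficients $A_k=\sum_{i\le k}\alpha_i$ and $\alpha_{k}^2\le A_k/L$. The potential is
\[
\Phi_k=A_k(f(y_k)-f^*)+\tfrac12\|z_k-x^*\|^2 .
\]
Writing $g_k=\nabla f(x_k)+b_k+(g_k-\E g_k)$, each step satisfies
\[
\Phi_{k+1}\le\Phi_k+\alpha_{k+1}\inner{b_k+\text{noise}}{x^*-z_k}+O(A_{k+1})\big(\|b_k\|^2+V_k\big)/L .
\]
The second-order error terms come from the smoothness inequality applied to $y_{k+1}$, together with the Young split $\inner{e}{v}\le\frac{1}{2L}\|e\|^2+\frac L2\|v\|^2$. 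The martingale part vanishes in expectation. The bias part gives $a_k\propto\alpha_kR/A_N$, and the remaining terms give $c_k,e_k\propto A_k/(LA_N)$. This growth of $c_k,e_k$ is the well-known error accumulation of accelerated methods.

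The main obstacle is the accelerated case. Keeping $\|z_k-x^*\|$ bounded under biased gradients requires either a bounded domain $R$ or a self-bounding induction argument: summing the potential and solving a quadratic inequality in $\max_k\|z_k-x^*\|$. The difficulty is doing this so that the result is still linear in $\sum_k a_k\E\|b_k\|$ rather than mixing channels. My first attempt would assume compact $\Omega$ with diameter $R$ and treat the unconstrained case afterwards through that induction. Once all three templates are in the form \eqref{eq:err_accumulation_2}, the coefficients are tabulated in Tab.~\ref{tab:drift_coefficients}.
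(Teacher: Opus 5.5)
Your proposal is correct and follows the same template as the paper's proof: a one-step inequality, conditional expectation to isolate $b_k$ and $V_k$, Cauchy--Schwarz or Young on the bias, and telescoping. Your nonconvex SGD argument is essentially Prop.~\ref{prop:sgd_nonconvex}. Your projected convex SGD is the Euclidean case of the mirror-descent row, Prop.~\ref{prop:mirror_descent}, with $D_K$ in place of your $R$ and the same coefficients.

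Where you differ is the accelerated case. The paper does not derive a Lyapunov recursion there. Prop.~\ref{prop:accelerated_sgd_biased} \emph{assumes} that the gradient error enters as $\gamma_k\langle g_k-\nabla f(y_k),x^*-z_k\rangle+\gamma_k^2\|g_k-\nabla f(y_k)\|^2$, and that $\|z_k-x^*\|\le R$ along the trajectory. It then only uses $\E[\|g_k-\nabla f(y_k)\|^2\mid\mathcal F_k]\le\|b_k\|^2+V_k$.

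Your potential-function derivation produces exactly such a recursion, with $\gamma_k\leftrightarrow\alpha_k$ and $\beta_N\gamma_N\leftrightarrow A_N$. So your $c_k=e_k\propto A_k/(LA_N)\asymp\alpha_k^2/A_N$ agrees with the table. This makes your argument self-contained, but ties it to one particular accelerated scheme. The paper's conditional statement instead covers any accelerated method whose analysis has that form. Because the paper simply assumes the bounded trajectory, your compact-domain version already matches it, and the self-bounding induction you worry about is not needed.

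Two small points. First, the Young split adds $\tfrac{L}{2}\|y_{k+1}-x_{k+1}\|^2$ on top of the smoothness term, and to absorb it into the $-\tfrac12\|z_{k+1}-z_k\|^2$ term you need slack such as $2L\alpha_k^2\le A_k$, not $L\alpha_k^2\le A_k$. Second, you omit the strongly convex SGD row of Tab.~\ref{tab:drift_coefficients}. The paper gets it from the same distance recursion in three steps:
\begin{itemize}
\item apply Young with weight $\mu$, which puts the bias in the squared channel ($a_k=0$, $c_k=L\rho_N\alpha_k/(\mu\rho_{k+1})$);
\item unroll the contraction factors;
\item convert to a function gap via $f(x_N)-f^*\le\tfrac{L}{2}\|x_N-x^*\|^2$.
\end{itemize}
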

Appx.~\ref{sec:proof_master_decomp} provides the proof.  Crucially, \emph{all fidelity dependence is routed through $b_k$ and $V_k$, not $a_k, c_k, e_k$}. It
therefore remains to quantify how these two quantities scale with
$\delta_k$.

\textbf{Example: the kernel two-point estimator.}
We next show how to translate fidelity into concrete bounds on
$b_k,V_k$ using the kernel two-point estimator.
\begin{definition}[Kernel two-point estimator]
\label{def:two_point_estimator}
Let $h>0$ be the smoothing radius,
$\zeta\sim\mathrm{Unif}(S_2^{d-1})$ a random direction,
$r\sim\mathrm{Unif}([-1,1])$ an independent scalar, and
$K\colon[-1,1]\to\mathbb{R}$ a smoothing kernel
(Def.~\ref{def:smoothing_kernel}). Given access to the noisy
oracle~\eqref{eq:noisy_oracle}, the estimator at $x$ is
\begin{equation}
g_k(x;h,r,\zeta)
\;:=\;
\frac{d}{2h}\,
\bigl(\hat f(x+hr\zeta)-\hat f(x-hr\zeta)\bigr)\,K(r)\,\zeta.
\label{eq:two_point_kernel_estm}
\end{equation}
\end{definition}
\begin{assumption}[Higher-order Smoothness]
\label{asm:hoelder_condition}
Fix $\beta \ge 2$ and let $\ell = \lfloor \beta \rfloor$. We assume $f \in \mathcal{F}_{\beta}(L_\beta)$, meaning $f$ is $\ell$-times continuously differentiable and the $\ell$-th derivative satisfies the H\"older condition,
$
\| f^{(\ell)}(x) - f^{(\ell)}(z) \|\leq L_\beta \| x - z \|^{\beta - \ell}, \quad \forall x, z \in \mathbb{R}^d.
$
\end{assumption} 
\begin{proposition}[$\|b_k(x_k)\|$ and $V_k(x_k)$]
\label{prop:kernel_bias_variance}
Let $g_k$ be as in Def.~\ref{def:two_point_estimator}. If $f$ satisfy Asm~\ref{asm:hoelder_condition}, then for any fixed $h>0$,
\begin{align}
\|b_k(x_k)\|
&\;\lesssim\;
L\,h^{\beta-1}
\;+\;
\frac{d}{h}\delta_k,
\label{eq:kernel_bias}\quad
V_k(x_k) \;\lesssim\;
\E\!\bigl[\,|f(x_k^{+})-f(x_k^{-})|^{2}\,\big|\,\mathcal{F}_{k-1}\bigr]
\;+\;
\frac{d^{2}}{h^{2}}\delta^2_k.
\end{align}
\end{proposition}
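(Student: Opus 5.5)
We split the estimator into a noiseless part and a noise part. Writing $\hat f(x_k\pm hr\zeta)=f(x_k^{\pm})+\xi^{\pm}$ with $x_k^{\pm}:=x_k\pm hr\zeta$, we have
\[
g_k=\underbrace{\tfrac{d}{2h}\bigl(f(x_k^+)-f(x_k^-)\bigr)K(r)\zeta}_{=:g_k^{0}}\;+\;\underbrace{\tfrac{d}{2h}\bigl(\xi^+-\xi^-\bigr)K(r)\zeta}_{=:g_k^{\xi}} .
\]
The bias $b_k(x_k)=\E[g_k\mid\mathcal F_k]-\nabla f(x_k)$ then splits into $\E[g_k^0\mid\mathcal F_k]-\nabla f(x_k)$, the smoothing bias, plus $\E[g_k^\xi\mid\mathcal F_k]$, the noise bias. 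For the second moment we use $\|u+v\|^2\le 2\|u\|^2+2\|v\|^2$, giving $\E\|g_k\|^2\le 2\E\|g_k^0\|^2+2\E\|g_k^\xi\|^2$. Each piece is bounded separately, and the constants hidden in $\lesssim$ absorb $\kappa:=\sup_{[-1,1]}|K|$ and $\int|r|^\beta|K(r)|dr$.

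\textbf{Smoothing bias.} This is the step we expect to be hardest. We use the standard kernel argument (Polyak--Tsybakov / Bach--Perchet style) and the properties of $K$ in Def.~\ref{def:smoothing_kernel}: $\int rK(r)\,dr=1$ and $\int r^jK(r)\,dr=0$ for $j=0,2,\dots,\ell$ with $j\ne1$.

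1. Taylor-expand $f(x_k\pm hr\zeta)$ to order $\ell$ around $x_k$. By Asm.~\ref{asm:hoelder_condition}, the remainder is bounded by $\tfrac{L}{\ell!}|hr|^\beta$.
2. In the difference $f(x_k^+)-f(x_k^-)$ only the odd-order terms survive. Multiply by $\tfrac{d}{2h}K(r)\zeta$ and take the expectation over $r$ and $\zeta$.
3. The first-order term gives $d\,\E_\zeta[\zeta\zeta^\top]\nabla f(x_k)\int rK(r)\,dr/1=\nabla f(x_k)$, since $\E[\zeta\zeta^\top]=I/d$.
4. The higher odd orders $3\le j\le\ell$ vanish by the moment conditions on $K$.
5. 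The remainder contributes at most $\tfrac{d}{2h}\cdot 2\tfrac{L}{\ell!}h^\beta\,\E|r|^\beta|K(r)|\cdot\E\|\zeta\|$.

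This yields $\lesssim d\,L h^{\beta-1}$. To reach the dimension-free form $Lh^{\beta-1}$ stated in the proposition, we would first try the sharper route that bounds $\sup_{\|u\|=1}\langle\cdot,u\rangle$ using $\E|\langle\zeta,u\rangle|\lesssim d^{-1/2}$. Failing that, we read the hidden constant in $\lesssim$ as allowing polynomial dimension factors, as is customary in this literature.

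\textbf{Noise bias and noise variance.} For the bias, we only need a crude bound:
\[
\bigl\|\E[g_k^\xi\mid\mathcal F_k]\bigr\|\le \E\|g_k^\xi\|\le \tfrac{d}{2h}\kappa\,\E\bigl(|\xi^+|+|\xi^-|\bigr)\le \tfrac{d}{h}\kappa\,\delta_k .
\]
The last inequality holds under adversarial noise directly. Under Tsybakov noise it follows from Jensen, $\E|\xi|\le(\E\xi^2)^{1/2}\le\delta_k$. No cancellation is needed, so this argument covers both noise models. It matches the term $\tfrac{d}{h}\delta_k$ in \eqref{eq:kernel_bias}.

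For the variance, $\|\zeta\|=1$ gives
\[
\E\|g_k^\xi\|^2\le \tfrac{d^2}{4h^2}\kappa^2\,\E(\xi^+-\xi^-)^2\le\tfrac{d^2}{h^2}\kappa^2\delta_k^2 .
\]
Similarly, $\E\|g_k^0\|^2\le\tfrac{d^2\kappa^2}{4h^2}\E|f(x_k^+)-f(x_k^-)|^2$. The factor $d^2/h^2$ in front of this term is either absorbed into $\lesssim$, or the first term of $V_k$ is read as already containing that scaling.

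Combining the two bias pieces by the triangle inequality, and the two second-moment pieces via the splitting above, gives both displayed bounds. A final check on conditioning: $x_k$ is $\mathcal F_k$-measurable, and $(r,\zeta)$ are drawn independently of $\mathcal F_k$. The noise bounds hold conditionally by Def.~\ref{asm:tsybakov_noise} and Def.~\ref{asm:adv_noise}. This justifies the conditional expectations, including the $\mathcal F_{k-1}$ version, via the tower property.
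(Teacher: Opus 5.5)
Your architecture matches the paper's proof (Lemma~\ref{lem:oracle_bv}). You split $g_k$ into $g_k^0$ and the noise term, and you bound the noise bias crudely; your $\mathbb{E}|\xi^\pm|\le\delta_k$ is interchangeable with the paper's Cauchy--Schwarz. You then use $\|a+b\|^2\le 2\|a\|^2+2\|b\|^2$ for the second moment. Your remark on $V_k$ is correct. What is actually provable, and what Lemma~\ref{lem:oracle_bv} proves, is a bound by $\mathbb{E}\|g_k^0\|^2=\frac{d^2}{4h^2}\mathbb{E}[(f(x_k^+)-f(x_k^-))^2K(r)^2]$, so the displayed first term should be read that way.

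The genuine gap is the smoothing bias. The paper does not derive it; it imports from \cite{bychkov2024accelerated} the dimension-free bound $\kappa_\beta L h^{\beta-1}$. Your Taylor expansion of $f$ gives $dLh^{\beta-1}$. Your refinement via $\mathbb{E}|\langle\zeta,u\rangle|\lesssim d^{-1/2}$ only reaches $\sqrt d\,Lh^{\beta-1}$, and that is the best one can get from the size bound $|R|\lesssim L|hr|^\beta$ on the remainder alone. Absorbing $d$ into $\lesssim$ is not acceptable here. The statement keeps $d$ explicit in the noise terms, and this dimension dependence is used downstream: in the balancing $\kappa_\beta Lh^{\beta-1}\asymp d\delta/h$ and in the constants $a_1,a_2$ of Sec.~\ref{sec:adv_noise}.

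The missing idea is the divergence-theorem identity
\[
\mathbb{E}_{\zeta}\bigl[f(x+t\zeta)\zeta\bigr]=\frac{t}{d}\,\mathbb{E}_{u}\bigl[\nabla f(x+tu)\bigr],\qquad u\sim\mathrm{Unif}(B^d),\ t\in\mathbb{R}.
\]
Apply it with $t=\pm hr$ and use the symmetry of $u$. This gives $\mathbb{E}[g_k^0\mid\mathcal F_k]=\mathbb{E}_{r,u}[rK(r)\nabla f(x_k+hru)]$, so the factor $d$ is cancelled exactly instead of being bounded. Now expand $\nabla f$ to order $\ell-1$:
\begin{itemize}
\item the zeroth-order term returns $\nabla f(x_k)$;
\item odd-order terms vanish by symmetry of $u$;
\item even orders $2\le m\le \ell-1$ vanish because $\mathbb{E}[r^{m+1}K(r)]=0$;
\item the remainder $\|R\|\le\frac{L}{(\ell-1)!}\|hru\|^{\beta-1}$ contributes at most $\frac{L}{(\ell-1)!}h^{\beta-1}\mathbb{E}[|r|^\beta|K(r)|]\,\mathbb{E}\|u\|^{\beta-1}\lesssim\kappa_\beta L h^{\beta-1}$, since $\mathbb{E}\|u\|^{\beta-1}=\frac{d}{d+\beta-1}\le 1$.
\end{itemize}

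Two minor points. First, with $r\sim\mathrm{Unif}([-1,1])$ the moment conditions of Def.~\ref{def:smoothing_kernel} must be read as $\mathbb{E}_r[rK(r)]=1$, as for the kernel $K(r)=3r$ of Sec.~\ref{ssec:pr-exp}; read literally as $\int rK=1$, your step~3 returns $\tfrac12\nabla f(x_k)$. Second, that definition does not guarantee $\sup|K|<\infty$. In the noise terms, condition on $(r,\zeta)$ and use $\mathbb{E}K(r)^2$ (the paper's $\kappa$) instead of your $\kappa=\sup|K|$, which also clashes with the paper's notation.
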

Appx.~\ref{sec:proof_example_kernel_estimator} contains the proof. Taking expectation and substituting \eqref{eq:kernel_bias} into \eqref{eq:err_accumulation_2} and collecting powers of
$\delta_k$ gives a fidelity-separable bound with \emph{two channels} (see Rem.~\ref{remark:channels}). The next sections illustrate the framework. We conclude this section with regularity assumptions required for the subsequent analysis. 
\begin{assumption}[Regularity of $f$]
\label{asm:regularity}
For all $x, y \in \mathbb{R}^d$, the differentiable function $f: \mathbb{R}^d \to \mathbb{R}$ satisfies:
\begin{itemize}
\item[(i)] \textbf{$L$-Smoothness:} $\|\nabla f(x) - \nabla f(y)\| \le L \|x-y\|$.
\item[(ii)] \textbf{$\mu$-Strong Convexity:} $f(y) \ge f(x) + \langle \nabla f(x), y - x \rangle + \frac{\mu}{2} \|y - x\|^2$ (convex if $\mu=0$).
\end{itemize}
\end{assumption}

\section{Adversarial noise}
\label{sec:adversarial_noise}

This section studies wall-clock complexity under adversarial noise (Def.\ref{asm:adv_noise}). Our main finding is that the cost exponent $\gamma$ acts as a template selector: it decides whether acceleration is wall-clock-optimal ($\gamma=1$ phase transition) but does not change the leading-order choices of $N, B, \delta$.

\subsection{Wall-clock complexity of gradient methods}
\label{sec:gradient_methods}

Let $f$ satisfy Asm.~\ref{asm:regularity} and let the oracle's answer $\hat{f}(x)$ be corrupted by the adversarial noise (Def.~\ref {asm:adv_noise}). We consider forward finite differences gradient estimation
\begin{equation}
\label{eq:one_point_estimation}
g(x,h)
=
\frac{1}{h}
\sum_{k=1}^{d}
\bigl(\hat f(x+h e^k)-\hat f(x)\bigr)e^k,
\end{equation}
where \(\{e^k\}_{k=1}^d\) is an orthonormal basis. Results from~\citep{gasnikov2023randomized}, with $h = 2\sqrt{\nicefrac{\delta}{L}}$, yield $\| b(x) \|_2 \le 2 \sqrt{d \delta L}$. First, consider the case $\delta_k = \delta$.
If $f$ is strongly convex (Asm.~\ref{asm:regularity}, $\mu > 0$),
\begin{equation} \label{eq:err_strongly_convex}
    f(x_N) - f^* 
    \lesssim \mathcal{E}(N, \delta, \Theta)  := LR^2 \exp\left(-\left(\frac{\mu}{L}\right)^{\frac{1}{p}} N\right) + \left(\frac{L}{\mu}\right)^{\frac{2p-1}{p}}  d\delta,
\end{equation}
with $\Theta = (L, p)$.
Using gradient methods with intermediate convergence; see \cite{devolder2013intermediate}. The parameter $p \in [1, 2]$ controls the rate of convergence: $p=1$ recovers GM, and $p=2$ recovers fast GM (FGM) \citep{vasin2023accelerated}. 

\begin{proposition}[Uniform schedule $\delta$]
\label{prop:igm_adversarial_uniform}
Consider gradient estimate~\eqref{eq:one_point_estimation}. Let $f$ satisfy Assm.~\ref{asm:regularity}. The wall-clock time complexity required to achieve $\mathcal{E}(N, \delta, \Theta) \le \varepsilon$ is 
\begin{equation}
\label{eq:complexity_igm}
    T_{\mathrm{total}} = 
    \begin{cases}
        \tilde{O}\left( \left(\frac{L}{\mu}\right)^{\frac{1}{p} (1 - \gamma) + 2\gamma}\left(\frac{\varepsilon}{d}\right)^{-\gamma} \right), & ~\text{if}~ \mu > 0,\\
        O \left( \left( \frac{\varepsilon}{9 d} \right)^{-\gamma} 
         \left( \frac{3LR^2}{\varepsilon} \right)^{\gamma + \frac{1}{p}} \right), & ~\text{if}~ \mu = 0.
    \end{cases}
\end{equation}

\end{proposition}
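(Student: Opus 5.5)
The plan is to treat this as the uniform-schedule optimization problem of Sec.~\ref{sec:problem_setup} with $c(\delta)=\delta^{-\gamma}$, so that $T_{\mathrm{total}}=N\delta^{-\gamma}$. We minimize this subject to $\mathcal{E}(N,\delta,\Theta)\le\varepsilon$. In both cases the bound has the form $\mathcal{E}_0(N)+C\,d\,\delta$, a single fidelity channel with exponent one in $\delta$. The natural split is to give each term half of the budget, or some fixed fraction. Corollary~\ref{corr:uniform} already shows that for fixed $N$ the optimal uniform $\delta$ saturates the constraint. Up to constant factors, taking $\mathcal{E}_0(N)\le\varepsilon/2$ and $C d\delta\le\varepsilon/2$ is then optimal: $T$ is increasing in $N$ and decreasing in $\delta$, so we want the smallest admissible $N$ and the largest admissible $\delta$.

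\textbf{Strongly convex case.} From \eqref{eq:err_strongly_convex}, requiring $LR^2\exp(-(\mu/L)^{1/p}N)\le\varepsilon/2$ gives
\[
N=(L/\mu)^{1/p}\ln(2LR^2/\varepsilon),
\]
which is $\tilde O((L/\mu)^{1/p})$. Requiring $(L/\mu)^{(2p-1)/p}d\delta\le\varepsilon/2$ gives $\delta\asymp(\varepsilon/d)(\mu/L)^{(2p-1)/p}$. Substituting,
\[
T_{\mathrm{total}}=N\delta^{-\gamma}=\tilde O\Bigl((L/\mu)^{\frac1p+\gamma\frac{2p-1}{p}}(\varepsilon/d)^{-\gamma}\Bigr).
\]
The exponent simplifies as $\frac1p+2\gamma-\frac{\gamma}{p}=\frac1p(1-\gamma)+2\gamma$, which matches \eqref{eq:complexity_igm}.

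\textbf{Convex case ($\mu=0$).} We need the IGM bound in the convex regime. The version from \cite{devolder2013intermediate} adapted to the biased gradient $\|b\|\le2\sqrt{d\delta L}$ has the form
\[
\frac{LR^2}{N^p}+N^{p-1}\cdot(\text{inexactness}),
\]
with the inexactness level scaling like $d\delta$ times a constant; the error-accumulation factor $N^{p-1}$ comes from the intermediate method. Splitting $\varepsilon$ into thirds (hence the constants $3$ and $9$), we set $N=(3LR^2/\varepsilon)^{1/p}$ and then choose $\delta$ so that the noise term is at most $\varepsilon/3$.

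The displayed formula $(\varepsilon/9d)^{-\gamma}(3LR^2/\varepsilon)^{\gamma+1/p}$ means that $\delta^{-1}\asymp(9d/\varepsilon)\,(3LR^2/\varepsilon)$. So the constraint we need is effectively $d\delta\cdot(\text{factor }LR^2/\varepsilon)\le\varepsilon/9$, i.e.\ the noise term carries an extra factor $R\sqrt{\cdot}$ or $N^{p}$-type amplification. With $N^p=3LR^2/\varepsilon$, the condition $N^p d\delta\lesssim\varepsilon$ gives exactly this.

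The main obstacle is identifying the precise convex-case error bound for IGM with the finite-difference bias, and in particular how the noise term scales with $N$. I would first try the form
\[
\mathcal{E}=\frac{LR^2}{N^p}+3\,N^{p}d\delta,
\]
and verify that the constants reproduce the $3$ and $9$. Then $T=N\delta^{-\gamma}$ follows by direct substitution. A final check is that balancing via Lagrange multipliers (Prop.~\ref{lemma:master} restricted to uniform $\delta$) changes only constants, confirming that the equal-split choice is order-optimal.
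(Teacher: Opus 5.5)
Your strongly convex computation is correct and follows the paper's route: apply Cor.~\ref{corr:uniform} to \eqref{eq:err_strongly_convex} after spending $\varepsilon/2$ on the exponential term.

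The convex case has a genuine gap. You never derive the bound you optimize, and your premise that for $\mu=0$ the bound is again a single channel linear in $\delta$ is wrong. The paper uses \eqref{eq:err_convex},
\[
\frac{LR^2}{N^p}+\sqrt{d\delta L}\,\tilde R_N+N^{p-1}d\delta,\qquad \tilde R_N=\max_{k\le N-1}\|x^k-x^*\|_2 .
\]
This has two fidelity channels. The first is the accumulated one, $N^{p-1}d\delta$, with $N^{p-1}$ and not $N^{p}$. The second is a non-accumulating channel of exponent $1/2$ in $\delta$, which comes from the bias $\|b\|\le 2\sqrt{d\delta L}$ paired with the distance to the optimum. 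Your surrogate $\frac{LR^2}{N^p}+3N^{p}d\delta$ was reverse-engineered from the target formula, and it does not dominate \eqref{eq:err_convex}. For fixed $N$ and small $\delta$ one has $\sqrt{d\delta L}\,R\gg N^{p}d\delta$, so tuning $(N,\delta)$ against your surrogate does not certify $\mathcal E\le\varepsilon$. It reproduces the right order only by coincidence: at $N^p\asymp LR^2/\varepsilon$ and $\delta\asymp\varepsilon^2/(dLR^2)$, both expressions happen to be of order $\varepsilon$.

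The missing ingredient is precisely the square-root channel, together with an assumption controlling $\tilde R_N$. Assume $\tilde R_N=O(R)$, for example via the stopping rule of \cite{vasin2023accelerated}. Put $LR^2/N^p\le\varepsilon/3$, so $N\asymp(3LR^2/\varepsilon)^{1/p}$. Bound the rest by
\[
2\max\{\sqrt{d\delta L}\,R,\;N^{p-1}d\delta\}\le 2\varepsilon/3 .
\]
The square-root channel forces $\delta\le\varepsilon^2/(9dLR^2)$, hence
\[
T^{(1)}=N\delta^{-\gamma}\le(3LR^2/\varepsilon)^{1/p}\,(9dLR^2/\varepsilon^2)^{\gamma}.
\]
This is where both the constant $9$ and the extra factor $(LR^2/\varepsilon)^{\gamma}$ come from. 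The linear channel alone gives only
\[
T^{(2)}\lesssim(\varepsilon/d)^{-\gamma}(3LR^2/\varepsilon)^{\gamma+(1-\gamma)/p},
\]
whose exponent is smaller by $\gamma/p$. Since $\varepsilon\le LR^2$, $T^{(1)}$ dominates the maximum, which gives the stated rate.

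This also explains why the form you first wrote down, $LR^2/N^p+N^{p-1}d\delta$, did not match the target. It yields only $T^{(2)}$, which is too optimistic, and the fix is the extra channel, not a change of $N^{p-1}$ to $N^{p}$.
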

Appx.~\ref{section:proof intermidiate} provides proof. Per \eqref{eq:complexity_igm}, the optimal $p$ for strongly convex problems ($\mu>0$) depends on $\gamma$: $p=2$ (FGM) is best for $\gamma<1$, and $p=1$ (GM) for $\gamma>1$. Thus, accelerated methods may actually be wall-clock slower if $\gamma>1$; Sec.~\ref{exp:gm_fgm}) provides experiments. In the convex regime ($\mu=0$, $LR^2/\varepsilon>1$), however, $p=2$ is optimal across all $\gamma$.

To analyse $\{\delta_k\}_k$ selected using Prop.~\ref{lemma:master},we introduce an intermediate gradient method (IGM) (Alg.~\ref{alg re-agm}) and prove its convergence (Thm.~\ref{igm convergence}),
\[
    f(x^N) - f^* \leqslant \left(1 - \frac{1}{16} \left(\frac{\mu}{L}\right)^{\frac{1}{p}} \right)^N LR^2
    + 6 d \left( \frac{\mu}{L} \right)^{2\frac{1 - p}{p}} \sum_{k = 0}^{N - 1} \left(1 - \frac{1}{16} \left(\frac{\mu}{L}\right)^{\frac{1}{p}} \right)^{N - k - 1} \delta_k.
\]

Thus we can obtain the following result.

\begin{proposition}[Time-varying $\delta_k$]
\label{prop:igm_adversarial_various}
Under assumptions of Prop.~\ref{prop:igm_adversarial_uniform},
if $\mu > 0$,
\[
T_{\mathrm{total}}^{\mathrm{IGM}} =
\begin{cases}
    \widetilde{O} \left( \left( \frac{\varepsilon}{12 d} \right)^{-\gamma} \left(\frac{L}{\mu}\right)^{\frac{1 - \gamma}{p} + 2\gamma} \right), & ~\text{if}~~\gamma \approx 0,\\
    O \left( (1 + \gamma^{-1}) \left( \frac{\varepsilon}{12 d} \right)^{-\gamma} \left(\frac{L}{\mu}\right)^{\frac{1 - \gamma}{p} + 2\gamma} \right) & ~\text{if}~~\gamma \not\approx 0,
\end{cases}
\]
The case $\gamma \approx 0$ corresponds constant time for any $\delta$ precision calculation, thus it makes sense chose $\delta_k = \delta$. Such case refers to Proposition~\ref{prop:igm_adversarial_uniform}. Further, if $\mu = 0$,
\[
T_{\mathrm{total}}^{\mathrm{IGM}} =
\begin{cases}
    \widetilde{O} \left( \left( \frac{\varepsilon}{24 d} \right)^{-\gamma} \left(\frac{2 LR^2}{\varepsilon}\right)^{\frac{1 - \gamma}{p} + 2\gamma} \right), & ~\text{if}~~\gamma \approx 0,\\
    O \left( (1 + \gamma^{-1}) \left( \frac{\varepsilon}{24 d} \right)^{-\gamma} \left(\frac{2 LR^2}{\varepsilon}\right)^{\frac{1 - \gamma}{p} + 2\gamma} \right) & ~\text{if}~~\gamma \not\approx 0.
\end{cases}
\]
\end{proposition}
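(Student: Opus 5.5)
The plan is to apply Prop.~\ref{lemma:master} directly to the IGM bound of Thm.~\ref{igm convergence}, which is already fidelity-separable with exponent $p_\delta=1$ in $\delta$. (We write $p_\delta$ for the fidelity exponent of Def.~\ref{def:separable_error}, to avoid a clash with the IGM parameter $p$.) Set $q:=\frac{1}{16}(\mu/L)^{1/p}$ and $\kappa:=L/\mu$. Then
\[
\mathcal{E}_0(N)=(1-q)^N LR^2, \qquad \alpha_k=6d\,\kappa^{2(p-1)/p}(1-q)^{N-k-1}, \quad k=0,\dots,N-1 .
\]
We split the target accuracy in half. First choose $N=\lceil q^{-1}\ln(2LR^2/\varepsilon)\rceil\asymp \kappa^{1/p}\ln(LR^2/\varepsilon)$, so that $\mathcal{E}_0\le\varepsilon/2$. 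Then take $\epsilon=\varepsilon/2$ in Prop.~\ref{lemma:master}. This choice accounts for the constant $12d=2\cdot 6d$ in the statement.

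The core computation is $A_N=\sum_k\alpha_k^{\gamma/(1+\gamma)}$. Put $s:=\gamma/(1+\gamma)$. Then
\[
A_N=(6d)^s\kappa^{2(p-1)s/p}\sum_{j=0}^{N-1}(1-q)^{js},
\]
and the geometric sum is at most $\min\{N,\,1/(1-(1-q)^s)\}$. We have $1-(1-q)^s\ge sq/(1+\dots)$, of order $sq$ for small $q$ (Bernoulli/concavity). The sum is therefore $\lesssim\min\{N,(1+\gamma^{-1})\,16\kappa^{1/p}\}$. Substituting into $T_{\mathrm{total}}=\epsilon^{-\gamma}A_N^{1+\gamma}$ gives
\[
(\varepsilon/12d)^{-\gamma}\,\kappa^{2(p-1)\gamma/p}\,\bigl[(1+\gamma^{-1})\kappa^{1/p}\bigr]^{1+\gamma}.
\]
Its $\kappa$-exponent is $\frac{2(p-1)\gamma+1+\gamma}{p}=\frac{1-\gamma}{p}+2\gamma$, as claimed. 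The prefactor $(1+\gamma^{-1})^{1+\gamma}$ is $O(1+\gamma^{-1})$ for $\gamma$ bounded away from $0$; one checks $(1+1/\gamma)^{\gamma}\le e$. When $\gamma\approx0$, the $1/(sq)$ bound blows up. In that regime we instead use the bound $N\asymp\kappa^{1/p}\ln(\cdot)$, which yields the $\widetilde O$ version with a logarithmic factor. Equivalently, in that regime one can reduce to the uniform schedule of Prop.~\ref{prop:igm_adversarial_uniform}.

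For $\mu=0$ we would run the standard regularization or restart reduction. Apply the strongly convex IGM to $f_\mu(x)=f(x)+\frac{\mu}{2}\|x-x_0\|^2$ with $\mu=\varepsilon/(2R^2)$. Accuracy $\varepsilon/2$ on $f_\mu$ gives accuracy $\varepsilon$ on $f$, and $L/\mu$ becomes $2LR^2/\varepsilon$. The halving of accuracy doubles the constant to $24d$. The rest is the same computation with $\kappa$ replaced by $2LR^2/\varepsilon$; the smoothness constant is $L+\mu\le 2L$, which is absorbed into constants.

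The main obstacle is the careful handling of the geometric sum's dependence on $s=\gamma/(1+\gamma)$, which is what produces the $(1+\gamma^{-1})$ factor and its degeneration at $\gamma\to0$. I would check this by verifying that $(1-q)^s\le 1-sq$ for $s\in(0,1]$, which holds by concavity of $t\mapsto t^s$. A secondary issue is confirming that the Prop.~\ref{lemma:master} optimum keeps $\delta_k$ in the admissible range, i.e.\ it is compatible with the choice $h=2\sqrt{\delta_k/L}$ made per step. Per the remark after Prop.~\ref{lemma:master}, Lem.~\ref{lemma:master_aux} handles $c(\delta)=\max\{1,\delta^{-\gamma}\}$ without changing the rate.
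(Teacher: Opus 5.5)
Your proposal is correct and follows the paper's proof essentially step for step. You plug $\nu=1-1/p$ and $\Delta_k=2\sqrt{dL\delta_k}$ into Thm.~\ref{igm convergence}, split the accuracy as $\varepsilon/2+\varepsilon/2$, and apply Prop.~\ref{lemma:master}, bounding the geometric sum by $N$ when $\gamma\approx 0$ and by $1/(sq)$ otherwise; you then reduce $\mu=0$ to the strongly convex case by regularization. Your concavity bound $1-(1-q)^s\ge sq$ and the estimate $(1+\gamma^{-1})^{\gamma}\le e$ usefully justify two steps that the paper only asserts.
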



\subsection{Accelerated ZO-SGD with adversarial noise}
\label{sec:adv_noise}
Set $f(x) := \mathbb{E}_{\zeta}[f(x, \zeta)]$, where $\zeta$ is a r.v. with an unknown distribution. Let the oracle output be corrupted by deterministic adversarial noise, $\hat{f}(x, \zeta) := f(x, \zeta) + \xi(x)$ and $|\xi(x)| \le \delta$. Let $f(x, \cdot)$ satisfy Assm.~\ref{asm:hoelder_condition} uniformly in $\zeta$ with some $\beta \ge 2$, so that the same
holds for $f(x) = \mathbb{E}_\zeta[f(x,\zeta)]$. Assume there exists $\sigma_* > 0$ such that $\mathbb{E}_{\zeta}\norm{\nabla f(x^*, \zeta) - \nabla f(x^*)}^2 \le \sigma^2_*,$ $x^* := \argmin_{x}f(x)$.

We use the accelerated zero-order SGD by~\cite{lobanov2023accelerated}. To estimate the gradient, we employ a two-point scheme (see Def.~\ref{def:two_point_estimator}) and adopt batching (see Rem.~\ref{rem:batching}). Let $B$ be the batch size. For the sake of transparency, we assume a constant smoothing parameter $h_t \equiv h$. Setting $Q:=BN$, we use Thm.~3.1 by \cite{bychkov2024accelerated}, and write
\begin{align*}
\mathrm{Gap}_{N} := \mathbb{E}f(x_N) - f^* \lesssim \mathcal{E}(N, B, \delta, \Theta) &:= \frac{LR^2}{N^2} + \frac{LR^2}{Q}  +
\frac{R}{\sqrt{Q}}\Big(\sqrt{d\kappa\sigma_*}
+\sqrt{d\kappa L h}+\sqrt{\kappa\,\frac{d\delta}{h}}\Big)
\\
&+\;
\underbrace{R\Big(\kappa_\beta L h^{\beta-1}+\frac{d\delta}{h}\Big)}_{(I)}
\;+\;
\underbrace{N\Big(\kappa_\beta^2L\,h^{2(\beta-1)}+\frac{d^2\delta^2}{Lh^2}\Big)}_{(II)},
\end{align*}
with $\kappa, \kappa_{\beta}$ defined in \eqref{eq:kappa}, and $\Theta := (d, \sigma_*, L, R, h)$. Note that for a fixed 
\(\delta\), 
(I) and (II) are balanced by choosing $  h^*(\delta) =\left(
        \frac{d\delta}{\kappa_\beta L}
    \right)^{1/\beta}$ (see Appx.~\ref{app:adv_design}).

\begin{proposition}
\label{prop:adv2}
Let \(Q:=BN\), \(p=\tfrac{\beta-1}{\beta}\), and
\(c(\Delta)=\Delta^{-\gamma}\). 
Define $ a_1 := (\kappa_\beta L)^{1/\beta}\,d^{(\beta-1)/\beta},
\\a_2 := \kappa_\beta^{2/\beta}\,L^{2/\beta-1}\,d^{2(\beta-1)/\beta}.$
In the large-\(N\), large-\(Q\) regime, the optimal design satisfies
\[
    N^*
    \asymp
    \frac{LR^2}{\varepsilon},
    \qquad
    Q^*
    \asymp
    \frac{R^2\kappa d\sigma_*}{\varepsilon^2},
    \qquad
    B^*
    =
    \frac{Q^*}{N^*}
    \asymp
    \frac{\kappa d\sigma_*}{L\varepsilon}.
\]
The optimal fidelity is $\delta^* \asymp \tfrac{\varepsilon^{\frac{\beta}{\beta-1}}}{d \kappa_\beta^{\frac{1}{\beta-1}} L^{\frac{1}{\beta-1}} R^{\frac{\beta}{\beta-1}}}$ and \(h^*=h^*(\delta^*)\). Consequently,
\[
  T_{\rm total}^* \asymp \frac{\kappa \sigma_* d^{1+\gamma} \kappa_\beta^{\frac{\gamma}{\beta-1}} L^{\frac{\gamma}{\beta-1}} R^{2+\frac{\gamma\beta}{\beta-1}}}{\varepsilon^{2+\frac{\gamma\beta}{\beta-1}}}
\]
\end{proposition}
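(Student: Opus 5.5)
Our plan is to treat the bound $\mathcal{E}(N,B,\delta,\Theta)$ as a sum of separate terms and allocate the error budget $\varepsilon$ among them up to constants: require each term to be at most $\varepsilon/c$ for a fixed $c$ (say $c=8$). Since $T_{\rm total}=BN\,c(\delta)=Q\,\delta^{-\gamma}$, the design variables decouple. $Q$ and $\delta$ enter the cost multiplicatively, so we minimise $T$ by taking the smallest admissible $Q$ and the largest admissible $\delta$. The one coupling is that $h$ is tied to $\delta$ through $h^*(\delta)$.

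\textbf{Step 1: the bias terms (I) and (II) fix $\delta$ and $h$.} With $h=h^*(\delta)=(d\delta/(\kappa_\beta L))^{1/\beta}$ we get $\kappa_\beta L h^{\beta-1}\asymp d\delta/h = a_1\delta^{p}$ with $p=\tfrac{\beta-1}{\beta}$. So (I) is of order $R a_1\delta^p$. In the same way (II) is of order $N a_2\delta^{2p}$, because $d^2\delta^2/(Lh^2)=a_2\delta^{2p}$. These are the two fidelity channels of Rem.~\ref{remark:channels}. Requiring $R a_1\delta^p\lesssim\varepsilon$ gives
\[
\delta\lesssim (\varepsilon/(R a_1))^{\beta/(\beta-1)} = \varepsilon^{\frac{\beta}{\beta-1}}\, d^{-1}(\kappa_\beta L)^{-\frac{1}{\beta-1}} R^{-\frac{\beta}{\beta-1}},
\]
which is exactly the claimed $\delta^*$.

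We then check that channel (II) does not bind once $N\asymp LR^2/\varepsilon$. Substituting, $N a_2\delta^{2p}\asymp (LR^2/\varepsilon)\,a_2\,\varepsilon^2/(R^2a_1^2)=\varepsilon\,L a_2/a_1^2$. A direct computation gives $a_2/a_1^2=L^{-1}$, so (II) is of order $\varepsilon$ as well, with a constant-factor adjustment. This is the step that justifies calling (I) the governing channel. The main obstacle is this bookkeeping: we need the algebra $a_2=a_1^2/L$ to close, and the constants chosen so that (II) fits within its share of the budget.

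\textbf{Step 2: the deterministic terms fix $N$ and $Q$.} The term $LR^2/N^2\le\varepsilon$ only needs $N\gtrsim\sqrt{LR^2/\varepsilon}$. However, (II) grows linearly in $N$, so $N$ should not be taken larger than necessary. We therefore take the $N^*\asymp LR^2/\varepsilon$ stated in the proposition, which is consistent with (II) by Step 1.

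For $Q$, the condition $LR^2/Q\le\varepsilon$ gives $Q\gtrsim LR^2/\varepsilon$. The dominant stochastic term $R\sqrt{d\kappa\sigma_*}/\sqrt Q\le\varepsilon$ gives $Q\gtrsim R^2\kappa d\sigma_*/\varepsilon^2$. In the small-$\varepsilon$ (large-$Q$) regime the latter dominates. The remaining variance terms, $\sqrt{d\kappa L h}$ and $\sqrt{\kappa d\delta/h}$, vanish as $\delta\to0$ because $h^*\to0$ and $\delta/h\propto\delta^{p}\to0$. Hence they are asymptotically dominated by $\sqrt{d\kappa\sigma_*}$, and $Q^*\asymp R^2\kappa d\sigma_*/\varepsilon^2$. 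The batch size then follows as $B^*=Q^*/N^*\asymp\kappa d\sigma_*/(L\varepsilon)$.

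\textbf{Step 3: optimality and the total cost.} Lower bounds hold term by term: any feasible design must satisfy each individual constraint up to constants. So $Q\gtrsim Q^*$ and $\delta\lesssim\delta^*$, where for the $\delta$ bound we note that any $h$ gives $\max(\kappa_\beta Lh^{\beta-1},d\delta/h)\ge a_1\delta^p$. Consequently
\[
T_{\rm total}\gtrsim Q^*(\delta^*)^{-\gamma},
\]
and the design of Steps 1 and 2 attains this bound. Substituting the expressions for $Q^*$ and $\delta^*$ and collecting powers of $d$, $\kappa_\beta$, $L$, $R$ and $\varepsilon$ yields the stated $T^*_{\rm total}$. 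The exponents are $d^{1+\gamma}$, $R^{2+\gamma\beta/(\beta-1)}$ and $\varepsilon^{-(2+\gamma\beta/(\beta-1))}$.
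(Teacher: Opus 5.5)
There is a genuine gap: your argument does not establish $N^*\asymp LR^2/\varepsilon$, and hence does not establish $B^*=Q^*/N^*$ either. Your derivations of $\delta^*$, $Q^*$ and $T^*_{\rm total}$ are sound.

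Your lower-bound step is actually stronger than the paper's argument. Channel (I) does not depend on $N$, and $\min_h(\kappa_\beta Lh^{\beta-1}+d\delta/h)\asymp a_1\delta^p$. So any feasible design, for any $h$ and any $N$, has $\delta\lesssim(\varepsilon/(Ra_1))^{1/p}$ and $Q\gtrsim R^2\kappa d\sigma_*/\varepsilon^2$. Hence $T_{\rm total}\gtrsim Q^*(\delta^*)^{-\gamma}$ holds globally. The paper only optimizes over the balanced radius $h^*(\delta)$, and over $N$ restricted to the large-$N$ regime.

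In Step 2 you do not derive $N^*$; you adopt it ``as stated in the proposition''. Your own reasoning points elsewhere:
\begin{itemize}
\item $LR^2/N^2\le\varepsilon$ needs only $N\gtrsim (LR^2/\varepsilon)^{1/2}$.
\item Channel (II) penalizes large $N$.
\item Your lower bound is attained by every $N$ with $(LR^2/\varepsilon)^{1/2}\lesssim N\lesssim LR^2/\varepsilon$ and $B=Q^*/N\ge1$, because $T_{\rm total}=Q\delta^{-\gamma}$ does not depend on $N$ on that range.
\end{itemize}
So without further input the optimal $N$ is not pinned down. Your rule ``$N$ should not be larger than necessary'' would point to the lower end, not to $LR^2/\varepsilon$.

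What fixes $N$ is the large-$N$ hypothesis, which you never use. The paper restricts to $N\ge N_{\rm cr}:=2R^2a_1^2/(a_2\varepsilon)\asymp LR^2/\varepsilon$, using $a_1^2/a_2=L$; this is the range where channel (II) binds. There $\delta\le\delta_E(N)=(\varepsilon/(2Na_2))^{1/(2p)}$, so $T_{\rm total}=Q\,(2Na_2/\varepsilon)^{\gamma/(2p)}$ is increasing in $N$. The optimum is therefore $N^*=N_{\rm cr}$, and $\delta_E(N_{\rm cr})=\delta_D$ recovers your $\delta^*$.

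Relatedly, calling (I) ``the governing channel'' is backwards relative to the statement's regime. For $N>N_{\rm cr}$ it is (II) that binds; your design sits exactly at the crossover where the two coincide.

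To close the gap, add the restriction $N\ge N_{\rm cr}$ and the monotonicity argument above. Alternatively, state explicitly that $N^*\asymp LR^2/\varepsilon$ is the largest cost-optimal horizon rather than the unique one, and that $B^*$ is defined relative to that choice.
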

\textbf{Take-away message.}
After balancing \(h\), the adversarial model reduces to two
fidelity channels, $R a_1\delta^p$ and $ N a_2\delta^{2p}.$
In the large-\(N\) regime the second channel is active, which determines
\(\delta^*\). The total number of oracle calls is instead dictated by $\sigma_*$, $ Q^*=BN^*
    \asymp
    \frac{R^2\kappa d\sigma_*}{\varepsilon^2},$ and $ B^*
    \asymp
    \frac{\kappa d\sigma_*}{L\varepsilon}.$
Thus the batching regime is controlled by comparing \(B^*\) with \(N^*\): if $B^*\le N^*$, then $\kappa d\sigma_*\lesssim L^2R^2.$ Hence, small $\sigma_*$ leads to moderate batching, while large $\sigma_*$ leads to overbatching.

In this adversarial large-\(N\) regime, \(\gamma\) affects the final
wall-clock cost through \((\delta^*)^{-\gamma}\), but it does not change
the leading-order choices of \(N^*,B^*,\delta^*\). \emph{The next section shows that
under another noise regime, the oracle-cost exponent \(\gamma\) can also change the
optimal tuning parameters.}

\section{Tsybakov's noise}
\label{sec:tsybakov}
This section considers batched setting (see Rem.~\ref{rem:batching}). So, the objective is $T_{\rm total}(N, \delta) = B N \delta^{-\gamma}$.
In contrast to Sec.~\ref{sec:adversarial_noise}, under Tsybakov noise (Def.~\ref{asm:tsybakov_noise}) the cost exponent $\gamma$ acts as a parameter tuner controlling batching, with threshold $\gamma=2$. 

\subsection{Strongly convex case: the batched baseline}
\label{sec:tsybakov_str_cvx}
Let \(f\) satisfy Asm.~\ref{asm:regularity} with \(\mu>0\),
Asm.~\ref{asm:hoelder_condition} with \(\beta\ge2\), and assume
\(\|f^{(2)}(x)-f^{(2)}(z)\|\le L\) for all \(x,z\). We use the btached
\(\ell_2\)-randomized estimator of~\citet{akhavan2024gradient}. Using
Cor.~19 by~\citep{akhavan2024gradient}, we derive the following bound for the batched case (see Appx.~\ref{app:akhavan_batched_proof} for detail),
\begin{equation}
\label{eq:akhavan_bound_batched}
\mathrm{Gap}_{N} := \mathbb E[f(\hat x_N)-f^*]
\lesssim
\underbrace{\frac{dL^2R^2}{\mu N}}_{\mathcal{E}_0(N, \Theta)}
+
\underbrace{\frac{L_\beta^{2/\beta}}{\mu}
\left(\frac{d^2\delta^2}{BN}\right)^{\frac{\beta-1}{\beta}}}_{\text{(I)}}
+
\underbrace{\frac{d^{1+2/\beta}L^2}{\mu N^{1+1/\beta}}
\left(\frac{\delta^2}{BL_\beta^2}\right)^{1/\beta}}_{\text{(II)}},
\end{equation}
where $\Theta= R$ and $L_{\beta}$ comes from Asm.~\ref{asm:hoelder_condition}.
Rem.~\ref{remark:channels} applies to \ref{eq:akhavan_bound_batched} with two channels. In the high-precision regime, (I) is an active channel;  see Appx.~\ref{app:akhavan_batched_proof}.

\begin{proposition}[Optimal batched design under strongly convex Tsybakov noise]
\label{prop:akhavan_walltime_batched}
Let the assumptions of Sec.~\ref{sec:tsybakov_str_cvx} hold, and let
\(\mathcal E(N,B,\delta)\) be given by~\eqref{eq:akhavan_bound_batched}.

If \(0<\gamma<2\), then batching is not beneficial at the leading order:
\(B^*=1\), and
\[
    N^*
    \asymp
    \frac{dL^2R^2}{\mu\varepsilon}
    \frac{2\beta-2+\gamma}{(\beta-1)(2-\gamma)},\qquad
    \delta^*
    \asymp
    d^{-1/2}L R\,
    \mu^{\frac{1}{2(\beta-1)}}L_\beta^{-\frac{1}{\beta-1}}
    \varepsilon^{\frac{1}{2(\beta-1)}}C_\beta(\gamma),
\]
where $C_\beta(\gamma):= \left(\tfrac{\gamma\beta}{2\beta-2+\gamma}\right)^{\frac{\beta}{2(\beta-1)}} \left(\tfrac{2\beta-2+\gamma}{(\beta-1)(2-\gamma)}\right)^{1/2}.$
Consequently,

$$T_{\rm total}^* \asymp d^{\frac{2+\gamma}{2}} L^{2-\gamma} R^{2-\gamma} L_\beta^{\frac{\gamma}{\beta-1}} \mu^{-\frac{2\beta-2+\gamma}{2(\beta-1)}} \varepsilon^{-\frac{2\beta-2+\gamma}{2(\beta-1)}} C_\beta(\gamma)$$

If \(\gamma\ge2\), the optimum is attained at the boundary \(\delta^*=1\), and
\[
    T_{\rm total}^*
    \asymp
    d^2L_\beta^{2/(\beta-1)}
    \mu^{-\beta/(\beta-1)}
    \varepsilon^{-\beta/(\beta-1)} .
\]
The same total-work order can be achieved either sequentially with
\(B=1\), or with a larger batch size and fewer sequential steps.
\end{proposition}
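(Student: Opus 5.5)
The plan is to collapse the three design variables $(N,B,\delta)$ into two by using the structure of~\eqref{eq:akhavan_bound_batched}, then solve a one-dimensional Lagrangian-type problem in the spirit of Prop.~\ref{lemma:master}. Throughout I work in the high-precision regime and, as stated before the proposition, take channel (I) as the active one.

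\textbf{Step 1: neglect channel (II).} I would first argue that (II) is dominated by (I) at the optimum. (II) carries an extra factor $N^{-1-1/\beta}$, and the optimal $N$ scales like $\varepsilon^{-1}$. I would check this a posteriori once $N^*,\delta^*$ are known, by comparing powers of $\varepsilon$ as in Appx.~\ref{app:akhavan_batched_proof}. The constraint then becomes
\[
\frac{a}{N}+K\,s^{q}\le\varepsilon,
\]
with
\[
a:=\frac{dL^2R^2}{\mu},\qquad K:=\frac{L_\beta^{2/\beta}d^{2q}}{\mu},\qquad q:=\frac{\beta-1}{\beta},\qquad s:=\frac{\delta^2}{Q},\qquad Q:=BN .
\]

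\textbf{Step 2: change variables.} The key observation is that (I) depends on $(B,N,\delta)$ only through $s$. The cost can be rewritten as
\[
T_{\rm total}=Q\,\delta^{-\gamma}=Q^{1-\gamma/2}\,s^{-\gamma/2}.
\]
So for fixed $s$, the sign of $1-\gamma/2$ decides everything.

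\emph{Case $\gamma<2$.} Here $T_{\rm total}$ is increasing in $Q$, so one takes $Q$ as small as the constraint allows. Since $Q=BN\ge N$ and $N$ is pinned from below by $a/(\varepsilon-Ks^q)$, this forces $B^*=1$ and $N=a/(\varepsilon-Ks^q)$.

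\emph{Case $\gamma\ge 2$.} Here $T_{\rm total}$ is nonincreasing in $Q$ at fixed $s$. Equivalently, for fixed $Q$ the cost $Q\delta^{-\gamma}$ is decreasing in $\delta$, so $\delta$ is pushed to the admissible boundary $\delta^*=1$, using $c(\delta)=\max\{1,\delta^{-\gamma}\}$ as in the remark after Prop.~\ref{lemma:master}. The constraint $K Q^{-q}\lesssim\varepsilon$ then gives
\[
Q\asymp \bigl(K/\varepsilon\bigr)^{1/q}=d^2L_\beta^{2/(\beta-1)}(\mu\varepsilon)^{-\beta/(\beta-1)},
\]
which is $T^*_{\rm total}$. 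Any split $Q=BN$ with $N\gtrsim a/\varepsilon$ is admissible. This includes $B=1$ because, for small $\varepsilon$, this $Q$ exceeds $a/\varepsilon$ (since $\beta/(\beta-1)>1$), which explains the final sentence of the statement.

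\textbf{Step 3: scalar optimization for $\gamma<2$.} Write $u:=Ks^q/\varepsilon\in(0,1)$ for the fraction of the budget given to the fidelity channel. Substituting $N=a/(\varepsilon(1-u))$ gives
\[
T\propto (1-u)^{-(1-\gamma/2)}\,u^{-\gamma/(2q)} .
\]
Setting the logarithmic derivative to zero yields
\[
u^*=\frac{\gamma\beta}{2\beta-2+\gamma},\qquad 1-u^*=\frac{(\beta-1)(2-\gamma)}{2\beta-2+\gamma}.
\]
This immediately produces the stated factor in $N^*=a/(\varepsilon(1-u^*))$. Recovering $\delta^*$ from $\delta^2=sN$ with $s=(u^*\varepsilon/K)^{1/q}$ gives the $C_\beta(\gamma)$ factor. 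Plugging into $N\delta^{-\gamma}$ gives $T^*_{\rm total}$.

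\textbf{Main obstacle.} I expect the main difficulty to be the bookkeeping of exponents of $d,L,R,\mu,L_\beta$ in $\delta^*$ and $T^*$. The structural argument is short; the exponent tracking is where errors would creep in. A secondary point needing care is verifying that (II) is indeed inactive at the computed optimum, and, in the $\gamma\ge2$ case, that $\delta^*=1$ is genuinely the optimum and does not violate $\delta_k<1$-type assumptions. I would handle the latter by treating $1$ as the cap imposed by the cost floor.
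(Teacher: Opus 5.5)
Your proposal is correct and is essentially the paper's argument. The paper also drops channel (II) (and checks a posteriori that it is lower order), uses that channel (I) depends only on $\delta^2/(BN)$, and derives $Q=BN=N$ (hence $B^*=1$) for $0<\gamma<2$ and $\delta^*=1$ for $\gamma\ge2$; it just organizes this as minimizing $\max\{N,S_N\delta^2\}\,\delta^{-\gamma}$ at fixed $N$ rather than fixing $s=\delta^2/Q$, and your budget fraction $u$ is its $1-C_0/(N\varepsilon)$, so both reach the same one-dimensional problem in $N$. One small slip: your ``Equivalently'' sentence in the $\gamma\ge2$ case is not an equivalence, since $Q\delta^{-\gamma}$ decreases in $\delta$ at fixed $Q$ for every $\gamma$ and that remark ignores the constraint; the correct justification is your preceding one, namely raise $Q$ at fixed $s$ (which leaves the constraint unchanged) until $\delta=\sqrt{sQ}$ reaches $1$.
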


\textbf{Take-away message.}
The Tsybakov model couples the iteration budget, fidelity and batch size through
the main noise channel
\((BN)^{-(\beta-1)/\beta}\delta^{2(\beta-1)/\beta}\). For \(0<\gamma<2\),
reducing \(\delta\) is cheaper than batching, hence \(B^*\asymp1\). For
\(\gamma\ge2\), reducing \(\delta\) is too expensive and the optimum moves to
\(\delta^*=1\). In this regime, batching does not improve the leading total
oracle work, but it can trade parallelism for smaller sequential depth.

\subsection{Tsybakov noise, accelerated convex case: batching as a new tuning parameter}
\label{sec:tsybakov_acc_batching}

We use the accelerated ZO-SGD scheme \citep{lobanov2024black} with the batched
two-point estimator (Rem.~\ref{rem:batching}). The convergence bound is
\begin{align*}
\label{eq:acc_tsyb_unified}
\mathcal{E}(N,B,\delta,h, \Theta)
&\lesssim
\frac{LR^2}{N^2}
\max\left\{
    1,\,
    \left(\frac{\kappa d}{B}\right)^2
\right\}
+
N
\min\left\{
    \frac{B}{\kappa},\,
    \frac{\kappa d^2}{B}
\right\}
\left(
    \frac{Lh^2}{d}
    +
    \frac{\delta^2}{Lh^2}
\right)
\\
&+
\widetilde R\kappa_\beta L_\beta h^{\beta-1}
+
\frac{N(\kappa_\beta L_\beta h^{\beta-1})^2}{L}.
\end{align*}
with $\kappa_\beta, L_{\beta}$ defined in Def.~\ref{def:two_point_estimator} and $\Theta = (\kappa, R, \tilde{R})$.
\begin{proposition}[Optimal design under accelerated Tsybakov noise]
\label{prop:acc_tsyb_design}
Assume \(\widetilde R=O(R)\), and fix some suitable \(h > 0\). In the small-batch regime \(B\le 4\kappa d\), set \(Q:=BN\). Then we get
\begin{equation}
\label{eq:Q_opt}
    Q^*
    \asymp
    \kappa dR\sqrt L\,\varepsilon^{-1/2}
    \left(
        \frac{2+3\gamma}{2+\gamma}
    \right)^{1/2},
    \quad
        \delta^*
    \asymp
    hL^{1/4}(dR)^{-1/2}
    \varepsilon^{3/4}
    G(\gamma),    .
\end{equation}
with $G(\gamma)
    :=
    \frac{(2\gamma)^{1/2}(2+\gamma)^{1/4}}
         {(2+3\gamma)^{3/4}}.$ Let $C_\gamma$ be a constant depending only on $\gamma$. We get,
\[
    T^* \asymp \kappa d^{\frac{2+\gamma}{2}} R^{\frac{2+\gamma}{2}} L^{\frac{2-\gamma}{4}} \varepsilon^{-\frac{2+3\gamma}{4}} h^{-\gamma} C_\gamma.
\]
In the large-batch regime \(B>4\kappa d\), the active constraints
imply $ \delta^2
    \asymp
    \frac{\varepsilon BLh^2}{N\kappa d^2}$, $T^*(B,h)\propto B^{1-\gamma/2}.$
Hence \(\gamma=2\) is the batching threshold. For \(0<\gamma\le2\),
overbatching is not beneficial at leading order. For \(\gamma>2\), the
optimum increases \(B\) until \(\delta^*=1\), yielding
$$N^* \asymp RL^{\frac{1}{2}}\,\varepsilon^{-\frac{1}{2}}, \qquad B^* \asymp \kappa d^2 R L^{-\frac{1}{2}} \varepsilon^{-\frac{3}{2}} h^{-2}, \quad T^* \asymp \kappa d^2 R^2 \varepsilon^{-2} h^{-2}.$$
\end{proposition}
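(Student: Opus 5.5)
We treat $h$ as fixed and minimize $T=BN\delta^{-\gamma}=Q\delta^{-\gamma}$ subject to $\mathcal E(N,B,\delta,h,\Theta)\le\varepsilon$. The terms $\widetilde R\kappa_\beta L_\beta h^{\beta-1}$ and $N(\kappa_\beta L_\beta h^{\beta-1})^2/L$ do not depend on $\delta$ or $B$. We assume $h$ is chosen ("suitable") so that they consume at most a constant fraction of $\varepsilon$, say $\varepsilon/4$; this is where $\widetilde R=O(R)$ enters. We also absorb $Lh^2/d$ into the $\delta^2/(Lh^2)$ term, since in the high-precision regime the fidelity term is the one that must be balanced. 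After this, the bound has a deterministic part and a single fidelity channel with $p=2$, in the sense of Rem.~\ref{remark:channels}. We then split according to which branch of the $\max/\min$ is active: $B\le 4\kappa d$ versus $B>4\kappa d$. The constant $4$ is chosen so that the crossovers $B=\kappa d$ and $B=\kappa d^2$, up to constants, fall consistently.

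\textbf{Small-batch regime.} Here $\max=(\kappa d/B)^2$ and $\min=B/\kappa$, so the bound becomes
\[
\frac{LR^2\kappa^2d^2}{Q^2}+\frac{Q\delta^2}{\kappa Lh^2}\le\varepsilon .
\]
It depends on $(N,B)$ only through $Q$, and we minimize $Q\delta^{-\gamma}$. From the constraint, $\delta^2=\kappa Lh^2(\varepsilon-LR^2\kappa^2d^2/Q^2)/Q$, so
\[
T(Q)\propto Q^{1+\gamma/2}\bigl(\varepsilon-c/Q^2\bigr)^{-\gamma/2},\qquad c:=LR^2\kappa^2d^2 .
\]
Setting $\frac{d}{dQ}\log T=0$ gives
\[
\frac{2+\gamma}{2Q}=\frac{\gamma c/Q^3}{\varepsilon-c/Q^2},
\]
that is, $Q^2=\frac{c}{\varepsilon}\cdot\frac{2+3\gamma}{2+\gamma}$. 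This reproduces $Q^*$ in \eqref{eq:Q_opt}. At this $Q^*$ the residual equals $\varepsilon-c/Q^{*2}=\varepsilon\,\frac{2\gamma}{2+3\gamma}$. Substituting back gives $\delta^{*2}=\kappa Lh^2\varepsilon\,\frac{2\gamma}{2+3\gamma}/Q^*$. Expanding $Q^*$ then yields the $\varepsilon^{3/4}$ scaling and the factor $G(\gamma)$. Finally $T^*=Q^*\delta^{*-\gamma}$, with the $\gamma$-dependent constants collected into $C_\gamma$. The $\kappa$ powers must be tracked carefully here; I expect some of them to be absorbed into $\asymp$. I would check this against the stated exponents of $\kappa$, $d$, $R$, $L$.

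\textbf{Large-batch regime.} Here $\max=1$ and $\min=\kappa d^2/B$. The constraint becomes
\[
\frac{LR^2}{N^2}+\frac{N\kappa d^2\delta^2}{BLh^2}\lesssim\varepsilon .
\]
The first term forces $N\gtrsim R\sqrt{L/\varepsilon}$. Making the second term active gives $\delta^2\asymp\varepsilon BLh^2/(N\kappa d^2)$, as stated. Then
\[
T=BN\delta^{-\gamma}\propto B^{1-\gamma/2}N^{1+\gamma/2},
\]
so $N$ should be taken minimal, $N^*\asymp R\sqrt{L/\varepsilon}$. The exponent of $B$ is the threshold:
\begin{itemize}
\item For $\gamma\le2$, $T$ is nondecreasing in $B$. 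The best choice is the smallest admissible $B$, which returns us to the small-batch boundary; so overbatching is not beneficial.
\item For $\gamma>2$, $T$ decreases in $B$. We push $B$ up until the box constraint $\delta\le1$ binds, i.e. $\delta^*=1$. This gives $B^*\asymp N^*\kappa d^2/(\varepsilon Lh^2)$, which equals the stated $\kappa d^2RL^{-1/2}\varepsilon^{-3/2}h^{-2}$, and $T^*=B^*N^*\asymp\kappa d^2R^2\varepsilon^{-2}h^{-2}$.
\end{itemize}

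\textbf{Main obstacle.} The delicate step is justifying the reduction at the start: the $h$-dependent bias terms and $Lh^2/d$ must be dominated, uniformly across both regimes, by a fixed fraction of $\varepsilon$. We must also confirm that in the small-batch regime the optimum $Q^*$ is compatible with $B\le4\kappa d$ and $N$ large. If $h$ cannot simply be treated as fixed, I would instead couple $h$ with $\delta$ as in Appx.~\ref{app:adv_design}. The separation into two regimes and the monotonicity in $B$ would be unaffected.
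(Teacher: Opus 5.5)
Your proposal is correct and follows the paper's proof essentially step for step. The paper likewise fixes $h$ and relegates the smoothing terms to the admissibility of $h$, so each regime reduces to a $Q^{-2}$ (resp.\ $N^{-2}$) deterministic term plus one $p=2$ fidelity channel. In the small-batch regime it solves the same one-dimensional first-order condition: it substitutes $s=A/(Q^2\varepsilon)$ and obtains $s^*=(2+\gamma)/(2+3\gamma)$, which is equivalent to your $Q^{*2}$. In the large-batch regime it uses $T\propto B^{1-\gamma/2}$ to push $B$ until $\delta^*=1$ when $\gamma>2$; it then re-optimizes $N$ at that boundary, which agrees with your choice up to constants.

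Two small remarks. First, $\kappa$ cancels exactly in $\delta^*$, so $T^*$ carries exactly one power of $\kappa$ and nothing needs to be absorbed into $\asymp$. Second, both branches of the $\max/\min$ switch at $B=\kappa d$, since $B/\kappa\le\kappa d^2/B$ iff $B\le\kappa d$, not at $\kappa d^2$. This slip is harmless, because you use the correct branch in each regime.
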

Appx.~\ref{app:acc_tsyb_design} provides the proof.

\textbf{Choice of $h$.} 
Prop.~\ref{prop:acc_tsyb_design} should be read conditionally on
an admissible smoothing radius \(h\). This parameter is not arbitrary: in the small-batch regime, the optimal choice is $  h_{\rm sm}^*(B)=\min\{H_1,H_2,H_3(B)\},$ with $H_1\asymp \varepsilon^{3/4},$ $ H_2\asymp \varepsilon^{1/(\beta-1)},$ and $    H_3(B)\asymp(\tfrac{B\varepsilon^{3/2}}{d})^{1/(2\beta-2)}$; Appx.~\ref{app:acc_tsyb_design} provides the details. 
Therefore the small-batch wall-clock bound should be
$$T_{\rm sm}^*(B) \asymp \frac{\kappa d^{\frac{2+\gamma}{2}} R^{\frac{2+\gamma}{2}} L^{\frac{2-\gamma}{4}} C_\gamma}{\varepsilon^{\frac{2+3\gamma}{4}} \bigl(h_{\rm sm}^*(B)\bigr)^\gamma}$$

In the overbatching regime \(\gamma>2\), the optimal smoothing radius is again the largest admissible one, $  h_{\rm lg}^*=\min\{K_1,K_2,K_3\},$
where $K_1 \asymp d^{\frac{1}{4}},$$ K_2 \asymp \varepsilon^{\frac{1}{\beta-1}},$ and $K_3 \asymp \varepsilon^{\frac{3}{2(2\beta-2)}}$; Appx.~\ref{app:acc_tsyb_design} provides the detail. The optimized overbatching complexity is $    T_{\rm overbatch}^*
    \asymp
    \kappa d^2R^2
    \varepsilon^{-2}
    \bigl(h_{\rm lg}^*\bigr)^{-2}.$

\textbf{Take-away message.}
This case has the same logic as Sec.~\ref{sec:tsybakov_str_cvx},but now
batching and smoothing provide additional tuning parameters. In the
small-batch regime \eqref{eq:Q_opt}, the optimal solution is $ Q^*\asymp dR\sqrt L\,\varepsilon^{-1/2},$ $\delta^*
    \asymp
    h^*G(\gamma)L^{1/4}(dR)^{-1/2}\varepsilon^{3/4},$ and 
 \(h^*\) is the largest smoothing
radius allowed by the constraints. Thus, as in
Sec.~\ref{sec:tsybakov_str_cvx}, the fidelity is not chosen from
\(\varepsilon\) alone: its prefactor depends on the oracle-cost exponent
\(\gamma\), and it is also coupled to the smoothing choice \(h^*\). Notably, in this regime, batching does not improve the leading active trade-off between $Q$ and $\delta$, because everything depends on $Q=BN$. However, batching can indirectly help if the smoothing constraint, $h^* = H_3(B)$, is active.

The difference appears when \(B\) is optimized. In the large-batch
regime, the wall-clock
cost scales as $T(B)\propto B^{1-\gamma/2}$. Therefore \(\gamma=2\) governs batching. For \(0<\gamma\le2\), increasing \(B\)
does not improve the leading active trade-off, so one may take
\(B^*=1\) unless a larger batch is needed to relax the admissible
smoothing radius. For \(\gamma>2\), fidelity is too expensive; instead
of decreasing \(\delta\), the method increases the batch size until
\(\delta^*=1\). Thus, in this setting, the
expensive-fidelity regime is handled by overbatching rather than by increasing \(N\).

\subsection{Regularization and comparison}
\label{sec:tsybakov_regularization_comparison}

We compare two convex Tsybakov designs. \textsc{Regularized ZO-SGD} applies
the strongly convex batched baseline of Sec.~\ref{sec:tsybakov_str_cvx} to
\(f_\mu(x)=f(x)+\frac{\mu}{2}\|x-x_0\|^2\), with
\(\mu\asymp\varepsilon/R^2\). \textsc{Accelerated ZO-SGD} is the accelerated
convex method of Sec.~\ref{sec:tsybakov_acc_batching}. Let \(T_{\rm reg}\) and
\(T_{\rm acc}\) denote their wall-clock complexities. Details are deferred to
Appx.~\ref{app:regularization_comparison}. 

Prop.~\ref{prop:akhavan_walltime_batched} gives, for
\textsc{Regularized ZO-SGD},
$$T_{\rm reg} \asymp \begin{cases} \frac{d^{1+\frac{\gamma}{2}} L^{2-\gamma} L_\beta^{\frac{\gamma}{\beta-1}} R^{4-\gamma+\frac{\gamma}{\beta-1}}}{\varepsilon^{2+\frac{\gamma}{\beta-1}}}, & 0 < \gamma < 2, \\[3mm] \frac{d^2 L_\beta^{\frac{2}{\beta-1}} R^{\frac{2\beta}{\beta-1}}}{\varepsilon^{2+\frac{2}{\beta-1}}}, & \gamma \ge 2. \end{cases}$$
For \textsc{Accelerated ZO-SGD}, Sec.~\ref{sec:tsybakov_acc_batching} gives
$$T_{\rm acc} \asymp \begin{cases} \frac{d^{1+\frac{\gamma}{2}}}{\varepsilon^{\frac{1}{2}+\frac{3\gamma}{4}+\gamma s}}, \quad s:=\max\left\{\frac{3}{4},\frac{1}{\beta-1}\right\}, & 0 < \gamma < 2, \\[3mm] \frac{d^2}{\varepsilon^{2+\frac{2}{\beta-1}}}, & \gamma \ge 2. \end{cases}$$

Thus, for \(0<\gamma<2\), \textsc{Accelerated ZO-SGD} has the better
\(\varepsilon\)-scaling when \(\beta\le7/3\), or when \(\beta>7/3\) and $\gamma<\gamma_{\rm crit}(\beta):=\frac{3(\beta-1)}{3\beta-5}.
$
For \(\beta>7/3\) and \(\gamma_{\rm crit}(\beta)<\gamma<2\),
\textsc{Regularized ZO-SGD} is better. For \(\gamma\ge2\), both methods satisfy $T_{\rm reg}\asymp T_{\rm acc}
    \asymp d^2\varepsilon^{-2-\frac{2}{\beta-1}},
$
but \textsc{Accelerated ZO-SGD} has smaller depth, $N_{\rm acc}\asymp R\sqrt L\,\varepsilon^{-1/2},$ $ N_{\rm reg}\asymp dL^2R^4\varepsilon^{-2}.$

\section{End-to-end recipe: supervised PageRank with an MCMC oracle}
\label{sec:experiments}
PageRank~\citep{page1999pagerank} is a method for ranking the nodes of a graph by
importance---originally the pages of the web, ordered by how likely a user
randomly following links is to land on each one. Formally, it scores each node
by its mass under the stationary distribution of a random walk that, at each
step, follows an outgoing edge with probability $1-\alpha$ and with probability
$\alpha$ restarts from a fixed distribution; a node is ranked highly when many
important nodes link to it.
PageRank~\citep{bogolubsky2016learning} instead makes the walk \emph{tunable}:
the probability of following a given edge, and of restarting at a given node,
become functions of node and edge features---properties of a page, the strength
of a link---combined through a weight vector $\phi$. These weights are fitted on
training queries for which the desired ordering of nodes is known, so that the
stationary distribution of the tuned walk reproduces those orderings and
generalizes to unseen graphs.

The training set is organized into queries: each query $q\in Q$ is a separate ranking instance. Given a directed graph $\Gamma_q = (V_q, E_q)$ for each query $q\in Q$, a seed set $U_q\subset V_q$, node features $V_i^q\in\R^{m_1}$, edge features $E_{ij}^q\in\R^{m_2}$, and a restart probability $\alpha\in(0,1)$, the parameter $\phi=(\phi_1,\phi_2)\in \R^m$ (where $m = m_1 + m_2$) induces a restart distribution $\pi^0_q(\phi)$ supported on $U_q$ and a transition matrix $P_q(\phi)$ defined by

$$[\pi^0_q(\phi)]_i \;=\; \frac{\inner{\phi_1}{V_i^q}}{\sum_{l\in U_q} \inner{\phi_1}{V_l^q}}\,, \qquad [P_q(\phi)]_{ij} \;=\; \frac{\inner{\phi_2}{E_{ij}^q}}{\sum_{l : i\to l\in E_q} \inner{\phi_2}{E_{il}^q}},$$

for $i\in U_q$ and $i\to j\in E_q$, respectively. The stationary distribution $\pi_q(\phi)\in\R^{p_q}$ satisfies $\pi = \alpha\, \pi^0_q(\phi) + (1-\alpha)\, P_q(\phi)^\top \pi$. The loss is 
\begin{equation}
    \label{eq:loss}
f(\phi) \;=\; \frac{1}{|Q|}\sum_{q=1}^{|Q|} \big\| (A_q\,\pi_q(\phi))_+ \big\|_2^2,
\end{equation}
where $A_q\in\R^{r_q\times p_q}$ encodes the labelled-pair comparisons. The feasible set is the Euclidean ball $\Phi = \{\phi\in\R^m : \norm{\phi - \widehat\phi}_2\le R\}$, which is chosen to lie entirely within $\R^m_{++}$. The optimization problem is:$$\min_{\phi\in\Phi}\, f(\phi).$$We operate under the local convexity assumption introduced by \citet[Theorem~2]{bogolubsky2016learning}, which states that $\Phi$ can be chosen as a sufficiently small neighborhood of a local minimizer $\phi^\ast$ over which $f$ is convex.

\subsection{The MCMC zero-order oracle}
\label{sec:oracle}
Rather than computing \(\pi_q(\phi)\) by power iteration, we estimate it by independent samples from the random-surfer representation of PageRank. One sample is generated by drawing
\[
    v_0\sim \pi_q^0(\phi),\qquad
    K\sim \mathrm{Geom}(\alpha)-1,
\]
and then applying \(K\) transitions according to \(P_q(\phi)\). The returned
state \(v_K\) has distribution
\[
    \alpha\sum_{t\ge 0}(1-\alpha)^t
    \left(P_q(\phi)^\top\right)^t \pi_q^0(\phi)
    = \pi_q(\phi),
\]
which is exactly the solution of $    \pi_q(\phi)=\alpha\pi_q^0(\phi)
    +(1-\alpha)P_q(\phi)^\top\pi_q(\phi).$
Thus the sampler is the geometric-stopping random-surfer sampler for the
PageRank stationary distribution. The resulting empirical histogram $\widehat\pi_q$ satisfies, by the
Central Limit Theorem,
\begin{equation}\label{eq:CLT}
  \E\bigl[ \widehat\pi_q \big| \phi\bigr] = \pi_q(\phi),
  \qquad
  \E\bigl[ \norm{\widehat\pi_q - \pi_q(\phi)}^2_2 \big| \phi\bigr]
  \;\le\;
  \frac{C}{M},
\end{equation}
for an explicitly computable constant $C$ independent of $M$. Plugging into \eqref{eq:loss}and using the Lipschitz-continuity of
\(u\mapsto (Au)_+\), the plug-in loss estimator satisfies a mean-square
oracle guarantee
\[
    \mathbb E\!\left[(\widehat f(\phi;\delta)-f(\phi))^2\bigg|\phi\right]
    \le \delta^2,
    \qquad
    M=\left\lceil \frac{C'}{\delta^2}\right\rceil .
\]
Thus, up to the calibration constant \(C'\), the MCMC sample budget required
for a level \(\delta\) scales as $ M(\delta)\asymp \delta^{-2}.$
This gives the oracle-cost exponent
\[
    c(\delta)\asymp \delta^{-2}
    \quad\Longrightarrow\quad
    \gamma=2.
\]
The constant $C'$ is at most a polynomial in the supremum of the loss on
$\Phi$ and the spectral gap; in our experiments, we calibrate it once.
Independent calls at the same $\phi$ produce independent noises,
matching Def.~\ref{asm:tsybakov_noise}.

\subsection{Instantiating the accelerated ZO-SGD of Sec.~\ref{sec:tsybakov_acc_batching}}
\label{ssec:pr-method}

We solve the problem with the accelerated
ZO-SGD scheme of Sec.~\ref{sec:tsybakov_acc_batching} (labelled ``Method~B'' in the
figures below), instantiated on the Euclidean ball
$\Phi=\{\phi:\|\phi-\widehat\phi\|\le R\}$ (see Alg.~\ref{alg:pr}). 
The main experiments use a calibrated Tsybakov proxy oracle,
\[
    \hat f(\phi;\delta)=f(\phi)+\xi,\qquad
    \xi\sim \mathcal N(0,\delta^2),
\]
with the cost \(c(\delta)=\delta^{-\gamma}\). This model
isolates the optimization effect predicted by Sec.~\ref{sec:tsybakov_acc_batching} while matching the
variance--cost scaling of the MCMC PageRank sampler, for which
\(\gamma=2\).

\paragraph{Bias and variance ($\beta=2$).} We fix the smoothness level $\beta=2$ (Asm.~\ref{asm:hoelder_condition}). Since the proxy oracle is centered, its noise contributes to the second moment but not to the conditional bias of the two-point estimator. Prop.~\ref{prop:kernel_bias_variance} therefore gives \begin{equation}\label{eq:pr-bias-var} \|b_k(\phi_k)\|\;\lesssim\;L\,h, \qquad V_k(\phi_k)\;\lesssim\;G^2+\frac{m^2\,\delta_k^2}{h^2}. 
\end{equation} 
With batching, the stochastic contribution scales as \[ V_k^{(B)}(\phi_k)\;\lesssim\;G^2+\frac{m^2\,\delta_k^2}{B h^2}. \] The actual plug-in MCMC oracle of Sec.~\ref{sec:oracle} is used to motivate the cost exponent \(\gamma=2\); its finite-sample bias is included in the mean-square oracle guarantee.


\begin{algorithm}[t]
\caption{Accelerated ZO-SGD for supervised-PageRank learning with a tunable zeroth-order oracle.}
\label{alg:pr}
\begin{algorithmic}[1]
  \Require initial point $\phi_0\in\Phi$; iterations $N$; batch size $B$;
           step size $\eta$; smoothing radius $h$; fidelity $\delta$;
           kernel $K$; momentum cap $\bar\beta\in[0,1)$.
  \State Set $\phi_{-1}\leftarrow\phi_0$;\ $t_0\leftarrow1$.
  \For{$k=0,1,\dots,N-1$}
    \State $t_{k+1}\leftarrow\tfrac12(1+\sqrt{1+4t_k^2})$,\
           $\beta_k\leftarrow\min(\bar\beta,(t_k-1)/t_{k+1})$.
    \State $y_k\leftarrow\mathrm{Proj}_\Phi(\phi_k+\beta_k(\phi_k-\phi_{k-1}))$.
    \For{$i=1,\dots,B$}
      \State Sample $r_{k,i}\sim\mathrm{Unif}([-1,1])$,\
             $u_{k,i}\sim\mathrm{Unif}(\mathbb{S}^{m-1})$.
      \State Query the tunable zeroth-order oracle at
             $\phi_{k,i}^{+}=y_k+h r_{k,i}u_{k,i}$ and
             $\phi_{k,i}^{-}=y_k-h r_{k,i}u_{k,i}$,
             obtaining $\widehat f(\phi_{k,i}^{+};\delta)$ and
             $\widehat f(\phi_{k,i}^{-};\delta)$.
      \State Form the two-point kernel estimate
      \[
        g_{k,i}
        =
        \frac{m}{2h}
        \left(
          \widehat f(\phi_{k,i}^{+};\delta)
          -
          \widehat f(\phi_{k,i}^{-};\delta)
        \right)
        K(r_{k,i})u_{k,i}.
      \]
    \EndFor
    \State $\bar g_k\leftarrow\tfrac1B\sum_{i=1}^B g_{k,i}$;\quad
           $\phi_{k+1}\leftarrow\mathrm{Proj}_\Phi(y_k-\eta\,\bar g_k)$.
  \EndFor
  \State \Return $\phi_N$.
\end{algorithmic}
\end{algorithm}

\subsection{Wall-clock-optimal design}
\label{ssec:pr-wallclock}

We now specialize the accelerated Tsybakov design of Prop.~\ref{prop:acc_tsyb_design}
to $\gamma=2$; in its notation the ambient dimension is $d=m=m_1+m_2$.
Throughout, $\widetilde{\mathcal O}(\cdot)$ hides logarithmic factors.

\paragraph{Small-batch regime ($0<\gamma<2$).}
With $Q=BN$, Prop.~\ref{prop:acc_tsyb_design} gives $B^\star=1$ and
\begin{equation}\label{eq:pr-smallB}
  Q^\star\asymp\kappa\,m\,R\sqrt{L}\,\varepsilon^{-1/2}
            \Bigl(\tfrac{2+3\gamma}{2+\gamma}\Bigr)^{1/2},\qquad
  \delta^\star\asymp h\,L^{1/4}(mR)^{-1/2}\varepsilon^{3/4}G(\gamma),\qquad
  G(\gamma)=\frac{(2\gamma)^{1/2}(2+\gamma)^{1/4}}{(2+3\gamma)^{3/4}},
\end{equation}
with wall-clock cost
$T^\star\asymp\kappa\,m^{1+\gamma/2}R^{1+\gamma/2}L^{1/2-\gamma/4}
\varepsilon^{-1/2-3\gamma/4}h^{-\gamma}C_\gamma$.

\paragraph{Boundary regime ($\gamma>2$): over-batching.}
Here $T^\star(B,h)\propto B^{1-\gamma/2}$ is decreasing in $B$ while the
interior fidelity satisfies $\delta^\star(B,h)<1$; the optimum raises $B$ until
$\delta^\star=1$, calling the oracle at the cheapest fidelity and absorbing its
variance through batching:
\begin{equation}\label{eq:pr-overbatch}
  N^\star\asymp R\sqrt{L}\,\varepsilon^{-1/2},\qquad
  B^\star\asymp\kappa\,m^2 R\,L^{-1/2}\varepsilon^{-3/2}h^{-2},\qquad
  T^\star\asymp\kappa\,m^2 R^2\,\varepsilon^{-2}h^{-2}.
\end{equation}

\paragraph{If $\gamma=2$.}
Substituting $\gamma=2$ into either~\eqref{eq:pr-smallB}
or~\eqref{eq:pr-overbatch} gives the same leading scaling
$T^\star\asymp\varepsilon^{-2}h^{-2}$ for our fixed-size instance.
Equivalently, $T^\star(B,h)\propto B^{1-\gamma/2}\equiv B^0$ is flat in
$B$: the same leading wall-clock order can be achieved either with smaller
batches and larger sequential depth, or by distributing the work across larger
batches at a constant-level fidelity. The MCMC-motivated cost model
\(\gamma=2\) thus lies at the batching threshold predicted by
Sec.~\ref{sec:tsybakov_acc_batching}.


\begin{remark}[Why the cheapest fidelity is optimal]\label{rem:pr-flat-batch}

Under the Tsybakov proxy, the oracle variance scales as
\(\delta^2\), while the per-call cost scales as \(\delta^{-2}\). Hence
\(\delta^2 c(\delta)\) is constant in \(\delta\). Once the smoothing bias is kept subdominant, paying for higher fidelity does not improve the
variance--cost trade-off; the surplus budget is better spent on increasing
\(B\) or \(N\).
\end{remark}


\begin{remark}[Connection to the \cite{bogolubsky2016learning} bound]
\label{rem:pr-2016}
\citet[Thm.~2]{bogolubsky2016learning} obtain a
\(\widetilde{\mathcal O}(\varepsilon^{-1})\) arithmetic complexity for their
gradient-free method. This rate relies on their lower-level PageRank solver:
by Lemma~1 of \cite{bogolubsky2016learning}, an accuracy \(\delta\) for the
function value is obtained with an inner cost logarithmic in \(1/\delta\).
Thus their result belongs to the log-cost oracle regime discussed in
Sec.~\ref{sec:problem_setup}.

Our setting addresses a different lower-level implementation: a sampling-based
PageRank oracle for which achieving mean-square accuracy \(\delta^2\) requires
\(M(\delta)\asymp\delta^{-2}\) samples. In this polynomial-cost regime, simply
combining an outer zeroth-order method with a high-accuracy inner oracle can be
suboptimal, because the inner sampling cost becomes a leading term. The
wall-clock design above instead optimizes the iteration count, fidelity, and
batch size jointly; for the variance-dominated Tsybakov model at \(\gamma=2\),
this yields the scaling
\(\widetilde{\mathcal O}(\varepsilon^{-2}h^{-2})\).
\end{remark}

\subsection{Numerical validation}\label{ssec:pr-exp}

We implement Alg.~\ref{alg:pr} in NumPy on a synthetic supervised-PageRank instance
built in the spirit of \cite{bogolubsky2016learning}. We take \(|Q|=4\) queries with
\(p_q=20\) nodes each, \(m_1=m_2=4\) (\(m=8\)), random directed graphs of
expected out-degree \(3\), positive node/edge features, \(\alpha=0.15\),
and a label set \(\{1,\ldots,5\}\) defining the pair-comparison matrices
\(A_q\). The feasible ball is
\[
    \Phi=\{\phi:\|\phi-\mathbf 1\|\le 0.45\}\subset \mathbb R^m_{++},
\]
the kernel is \(K(r)=3r\), and \(\bar\beta=0.95\). The optimum
\(f^\star\approx 7.282\) (initial gap \(\approx 0.217\)) is precomputed by
deterministic projected gradient descent on the exact loss. We report the wall-clock cost at which the running mean
\[
    \bar f_k = \frac1T \sum_{j=k-T+1}^k f(\phi_j)
\]
over \(T=20\) iterates first drops below \(\varepsilon\). We additionally
include a sanity check for the actual MCMC sampler to confirm its
\(\delta^{-2}\) sample-budget scaling.

\paragraph{Experiment 1 (convergence).}
Using the Tsybakov proxy oracle, we run
\(N=2000\) iterations at \(\delta=0.05\), \(h=0.1\), \(B=1\), and
\(\eta=3\cdot 10^{-4}\). The method reaches residual \(\sim 10^{-3}\) in
\(\sim 10^3\) iterations and \(\sim 10^6\) cost units, with the
characteristic FISTA staircase (Fig.~\ref{fig:pr-conv}).

\begin{figure}[t]
  \centering
  \includegraphics[width=\textwidth]{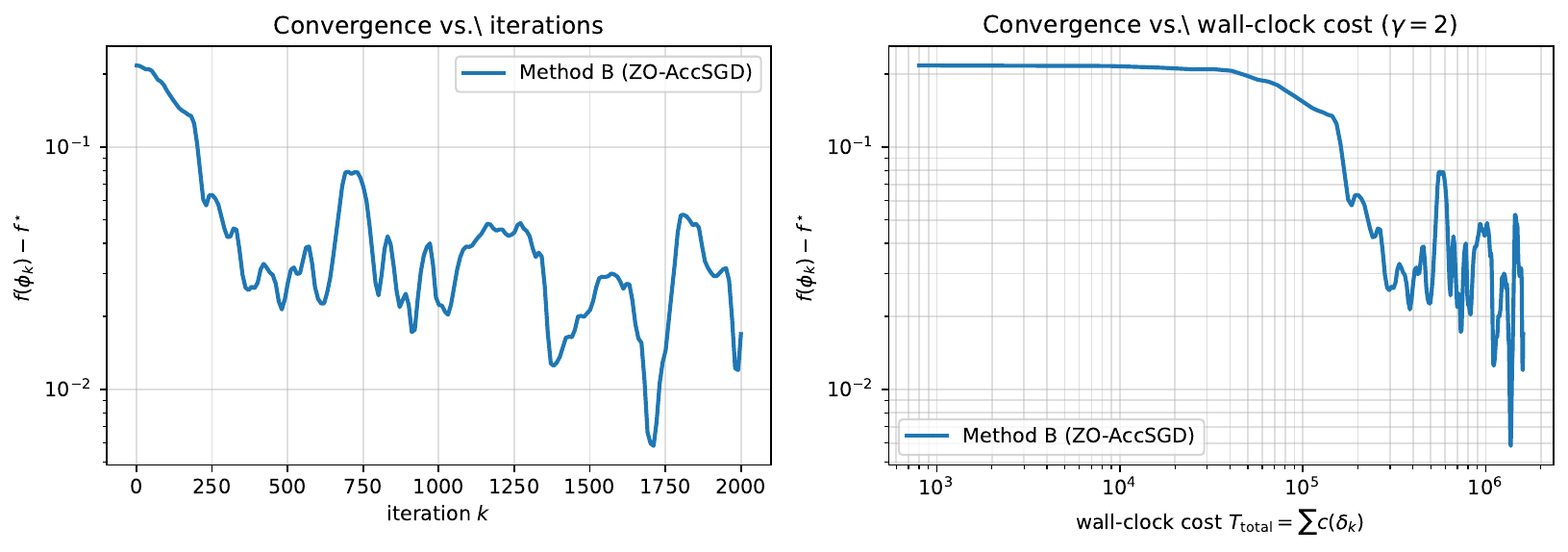}
  \caption{Convergence of the accelerated ZO-SGD on the synthetic
    supervised-PageRank instance. Left: trailing-mean residual vs.\ iteration
    $k$. Right: same vs.\ wall-clock cost $T_{\rm total}=\sum c(\delta_k)$ at
    $\gamma=2$.}
  \label{fig:pr-conv}
\end{figure}

\paragraph{Experiment 2 (the optimal fidelity $\delta^\star$).}
Sweeping $\delta$ on a log-grid from $5\!\cdot\!10^{-3}$ to $1$ at target
$\varepsilon=0.05$ (Fig.~\ref{fig:pr-delta}) reproduces the structure of
Sec.~\ref{ssec:pr-wallclock}: \textbf{(i)} for small $\delta$ the cost falls as
$T_{\rm total}\propto\delta^{-2}$ (the cost of the calls, iterations bounded);
\textbf{(ii)} the optimal $\delta^\star$ for $B=1$ sits just below the
divergence threshold ($\delta\approx0.24$), beyond which noise overwhelms
descent; \textbf{(iii)} raising $B$ to $4$ pushes that threshold right
($B=4$ still converges at $\delta=0.38$), the overbatching trade-off
of~\eqref{eq:pr-overbatch}.

\begin{figure}[t]
  \centering
  \includegraphics[width=0.7\textwidth]{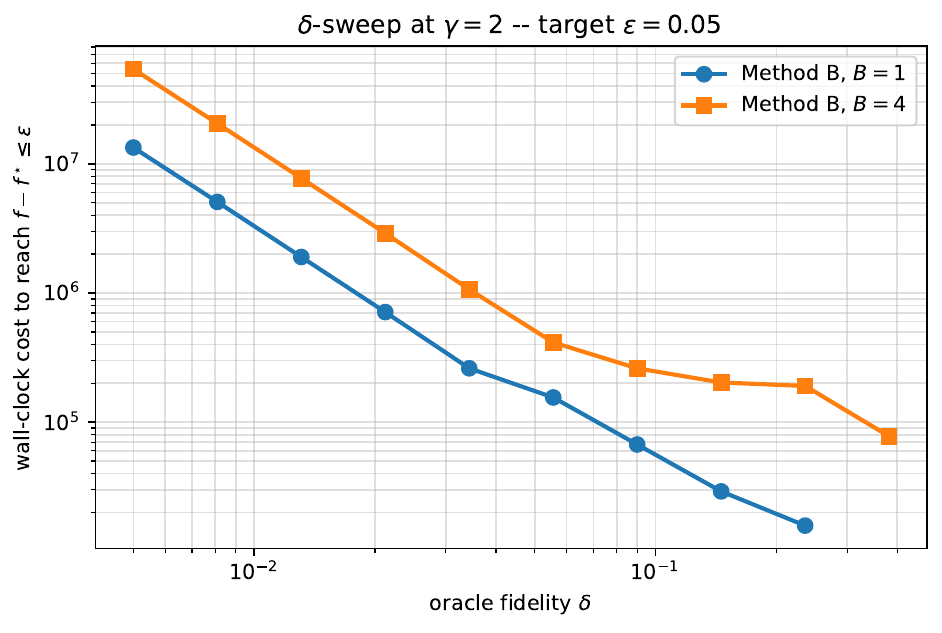}
  \caption{Wall-clock cost vs.\ fidelity $\delta$ at $\gamma=2$,
    $\varepsilon=0.05$. At small $\delta$ the cost scales as $\delta^{-2}$; the
    optimum sits just below the divergence threshold, matching the boundary
    behaviour of~\eqref{eq:pr-overbatch}. Larger $B$ shifts the threshold
    right.}
  \label{fig:pr-delta}
\end{figure}

\paragraph{Experiment 3 (flat batching at $\gamma=2$).}
Fixing $\delta\in\{0.02,0.1,0.5\}$ and sweeping $B\in\{1,\dots,32\}$ (with the
step size scaled by $\eta_B\propto1/(\delta/\sqrt B)$) yields
Fig.~\ref{fig:pr-B}: at $\delta=0.02$ batching is wasteful (cost grows linearly
in $B$); at $\delta=0.5$ the unbatched method does not converge at all, and the
cost is essentially flat across $B\in\{2,4,8,16\}$ --- the empirical signature
of $T^\star(B)\propto B^{1-\gamma/2}=B^0$ at $\gamma=2$.

\begin{figure}[t]
  \centering
  \includegraphics[width=0.7\textwidth]{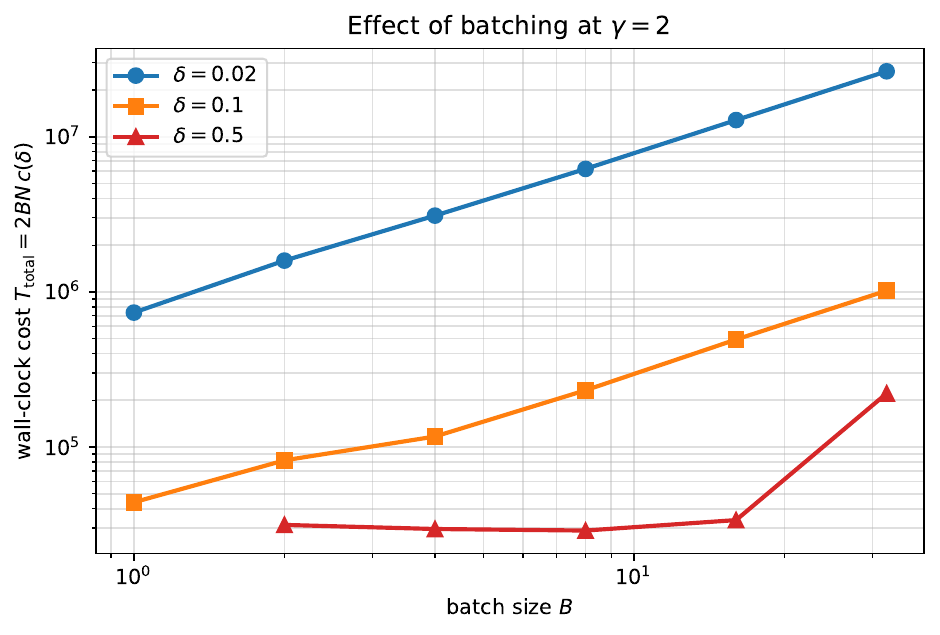}
  \caption{Effect of batch size $B$ at three fidelities, $\gamma=2$. At
    $\delta=0.02$ cost grows linearly in $B$. At $\delta=0.5$ (boundary),
    $B=1$ does not converge (no marker), and cost is flat-to-helpful for
    $B\in\{2,4,8,16\}$ --- overbatching at the predicted boundary.}
  \label{fig:pr-B}
\end{figure}

\paragraph{Experiment 4 (phase transition in $\gamma$).}
Replacing $c(\delta)=\delta^{-2}$ by $c(\delta)=\delta^{-\gamma}$ for
$\gamma\in\{0.5,1,1.5,2,2.5,3\}$ (noise distribution unchanged) and jointly
optimizing over $(B,\delta)$ gives Fig.~\ref{fig:pr-gamma}: at $\gamma=0.5$ the
optimum is interior ($\delta^\star\approx0.03$, $B^\star=1$), and as $\gamma$
grows it migrates to the boundary ($\delta^\star=1$, $B^\star=4$). The
migration occurs slightly below the asymptotic threshold $\gamma_{\rm crit}=2$:
this is expected, since the threshold is a small-$\varepsilon$, large-$N$
statement while our instance is mid-precision ($\varepsilon=0.05$ against an
initial gap of $0.22$). The qualitative prediction --- $\delta^\star$ jumping
to the boundary and $B^\star$ above $1$ as $\gamma$ rises --- is robustly borne
out.

\begin{figure}[t]
  \centering
  \includegraphics[width=\textwidth]{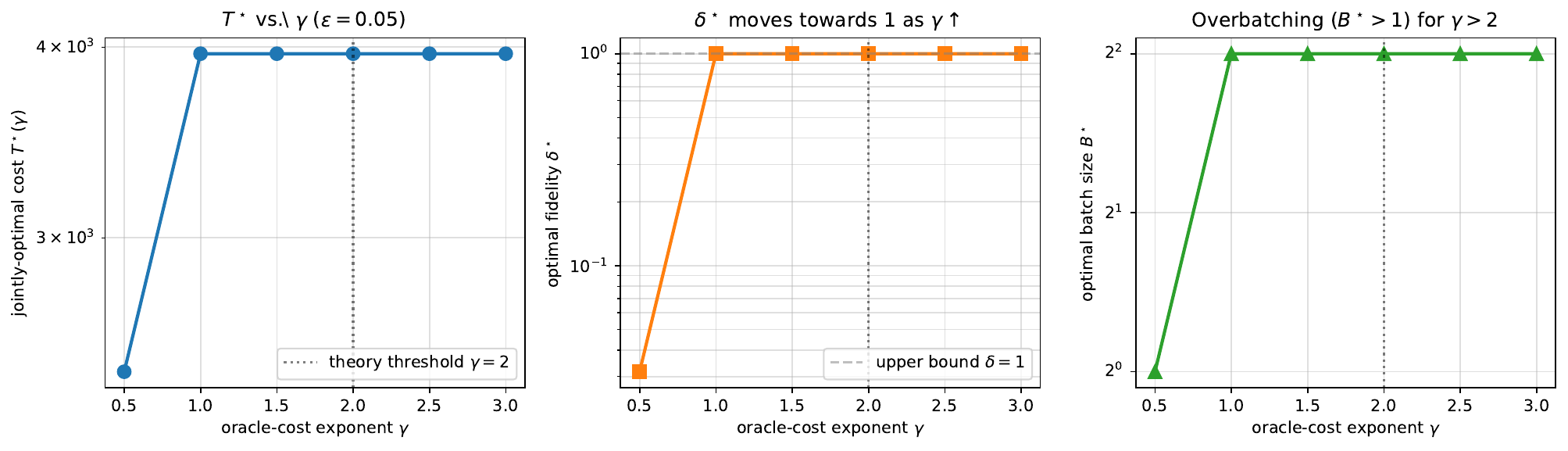}
  \caption{Phase transition in $\gamma$. Left: jointly optimized cost
    $T^\star(\gamma)$. Centre: $\delta^\star$ migrates from $\approx0.03$ at
    $\gamma=0.5$ to $1$ as $\gamma$ crosses the threshold. Right: $B^\star$
    jumps from $1$ to $4$ at the same transition (overbatching).}
  \label{fig:pr-gamma}
\end{figure}
\section{Discussion and limitations}
\label{sec:discussion}
\textbf{Discussion.} Our framework reveals that the oracle-cost exponent $\gamma$ behaves differently depending on the noise model. Under adversarial noise, it acts as a template selector: a phase transition at $\gamma=1$ makes non-accelerated GM wall-clock optimal over FGM, while optimal $N, B, \delta$ remain $\gamma$-independent. Under Tsybakov noise, $\gamma$ is a parameter tuner, setting a batching threshold ($\gamma=2$) that induces overbatching. Neither template dominates universally (Sec.~\ref{sec:tsybakov_regularization_comparison}); the choice depends on $\beta$ and $\gamma$ via $\gamma_{\text{crit}}(\beta)=\frac{3(\beta-1)}{3\beta-5}$. Finally, the gap $\Delta = p+\sigma\gamma-\rho(p+\gamma)$ in \eqref{eq:benifits} precisely diagnoses when adaptive fidelity yields polynomial (rather than constant) wall-clock gains, matching empirical observations.

\textbf{Limitations.} (i) We assume a polynomial oracle cost $c(\delta)\propto\delta^{-\gamma}$. While this drives the non-trivial trade-offs, real oracles may have fixed overheads or regime-dependent exponents. (ii) We assume zero switching costs for fidelity (Prop.~\ref{lemma:master}); explicit switching penalties remain unmodeled. (iii) Our closed-form bounds target the high-precision limit ($\varepsilon\to 0$); moderate $\varepsilon$ requires boundary analysis. (iv) Fidelity-separability is proven for first-order templates (GD, FGM, MD, SGD; Prop.~2) under both noise models; extending this to trust-region, surrogate, or second-order ZO methods is left open. (v) Optimal schedules require known problem constants ($L, \mu, \sigma_*$).Future research. Practical applications motivate extending our framework from absolute noise $\delta$ to composite noise models (supported empirically in Appx.~\ref{emperical gamma estimation}). 

\textbf{Future work} will explore composite adversarial noise, $|\hat{f}(x) - f(x)| \leq \beta (f(x) - f^*)^q + \delta$ \citep{vasin2025solving}, and Tsybakov noise under a strong growth condition, $\mathbb{E} (\hat{f}(x) - f(x))^2 \leq \rho (f(x) - f^*)^q + \delta^2$ \citep{vaswani2019fast}. Developing adaptive estimation for unknown problem constants is another key next step.

\bibliographystyle{unsrtnat}
\bibliography{references}

\appendix

\section{Experiments} \label{appendix:experiments}

\subsection{Estimating $\gamma$ for deep learning problems} \label{emperical gamma estimation}

\textbf{Hardware details:} Experiments were performed on Intel(R) Core(TM) i9-12900H: 14 cores,
32 GB RAM, NVIDIA RTX 3080 Ti Mobile GPU 16 GiB.

\textbf{Experiments details:} For most experiments below was used Adam~\citep{kingma2014adam} with learning rate $=0.001$.

We provide experiments for various deep learning problems. We will estimate the error as the calculation accuracy decreases, thus, at each iteration we will calculate loss using different floating point numbers -- $f_{\text{FP16}} (x^k)$ and $f_{\text{FP64}} f(x^k)$. Our goal is to estimate the $\gamma$ parameter in model $c(\delta) = \delta^{-\gamma}$, assuming that $f_{\text{FP64}}(x^k)$ real function value. Thus
\begin{equation*}
        \delta_k = | f_{\text{FP64}}(x^k) - f_{\text{FP16}}(x^k) |, \quad \delta^* = \max_{0 \leqslant k \leqslant N - 1} \delta_k, \quad \gamma_k = -\log_{\delta^*} (T_k^{\text{FP16}}),
\end{equation*}
where $T_k^{\text{FP16}}$ is the calculation time of loss using FP16.

In this experiments, the network is not trained via ZO methods, however, at each point of the trajectory, additional steps of the forward pass are performed. We will plot training curves - loss on train dataset, accuracy on test part and $\delta_k, T_k, \gamma_k$. Firstly we consider simple custom convolutional neural network and MNIST~\citep{deng2012mnist} dataset.

\begin{figure}[H]
    \centering
    \includegraphics[width=\linewidth]{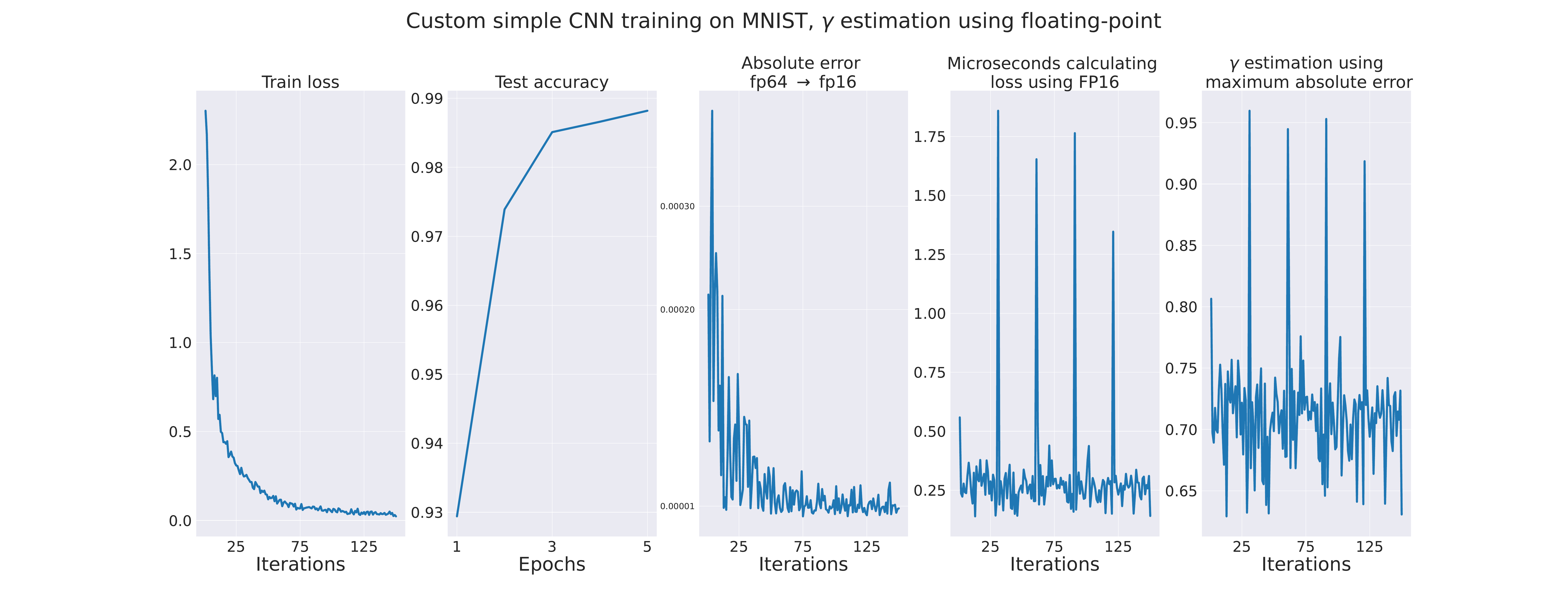}
    \caption{Simple custom CNN, MNIST, batch size -- 2048}
    \label{fig:mnist_gamma_estimation}
\end{figure}

On the Figure~\ref{fig:mnist_gamma_estimation} established, that $0.6 \leqslant \gamma \leqslant 1$. Note that the graph shows that the error decreases as the loss decreases, thus it is motivates to consider relative error $|\hat{f}(x) - f(x) | \leqslant \beta (f(x) - f^*)^{q}$. The following experiment uses the same idea for ResNet-18~\citep{he2016deep} and CIFAR-10~\citep{krizhevsky2009learning}.

\begin{figure}[H]
    \centering
    \includegraphics[width=\linewidth]{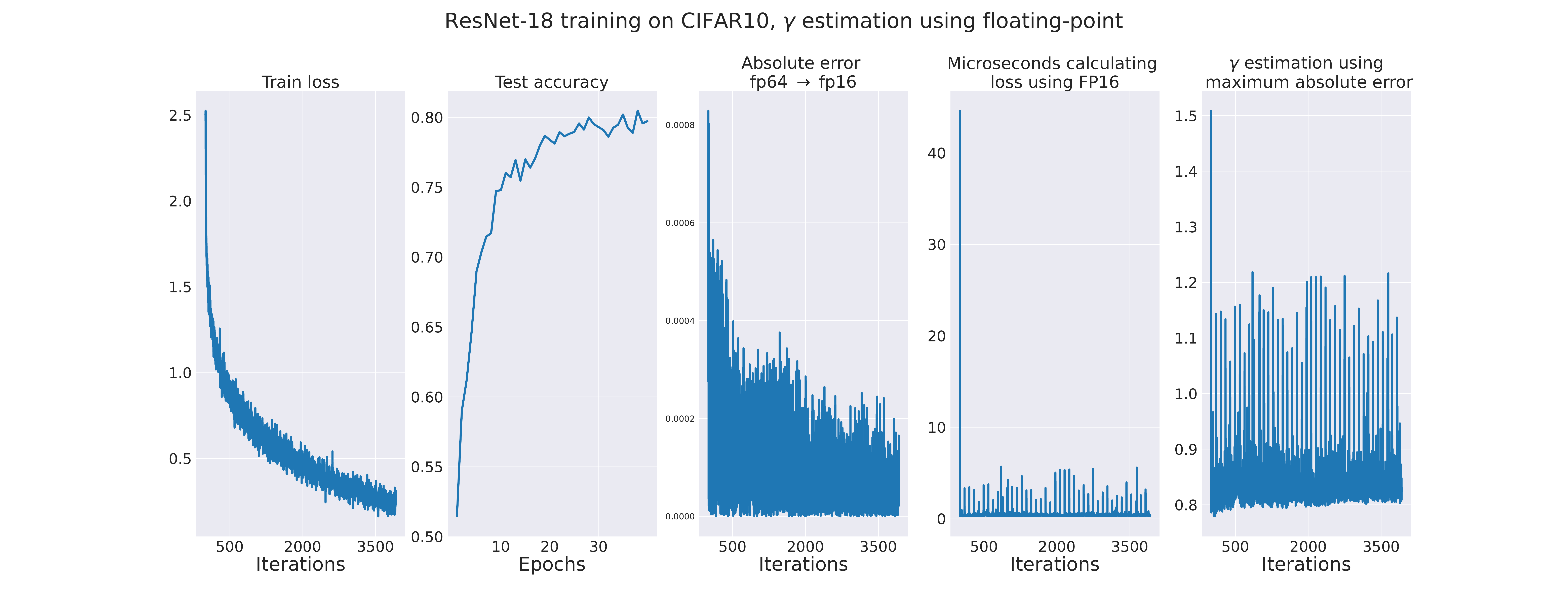}
    \caption{ResNet-18, CIFAR10, batch size -- 512}
    \label{fig:resnet18_gamma_estimation}
\end{figure}

Figure~\ref{fig:resnet18_gamma_estimation} demonstrates, that $0.7 \leqslant \gamma \leqslant 1.6$.


\subsection{Comparison of GM vs. FGM}
\label{exp:gm_fgm}
We compare GM and FGM on three strongly convex problems: (i) ridge regression on the UCI Superconductivity dataset with 81 features \footnote{\url{https://archive.ics.uci.edu/dataset/464/superconductivty+data}}, (ii) a synthetic quadratic objective, (iii) regularized logistic regression on a binarized version of the \texttt{digits} dataset \footnote{\url{https://scikit-learn.org/1.5/auto_examples/datasets/plot_digits_last_image.html}}. In all cases, the \red{...} demonstrates similar behaviour. So, below present only (i), and the rest is in Appx.~\ref{app:gm-fgm-details}.


In (i), we considered $f(w) = \frac{1}{2n}\|Xw-Y\|_2^2 + \frac{\lambda}{2}\|w\|_2^2$. We set the target accuracy  $\varepsilon = 10^{-8}$, and use the search of $\delta$ over 300 logarithmically spaced points between $10^{-15}$ and $10^{-2}$. Fig.~\ref{fig:gm_vs_fgm_superconductivity} presents the result; the empirical phase transition occurs around $\gamma \approx 0.9$, supporting the claim of Prop.~\ref{prop:igm_adversarial_uniform}
\begin{figure}[H]
    \centering
    \includegraphics[width=\textwidth]{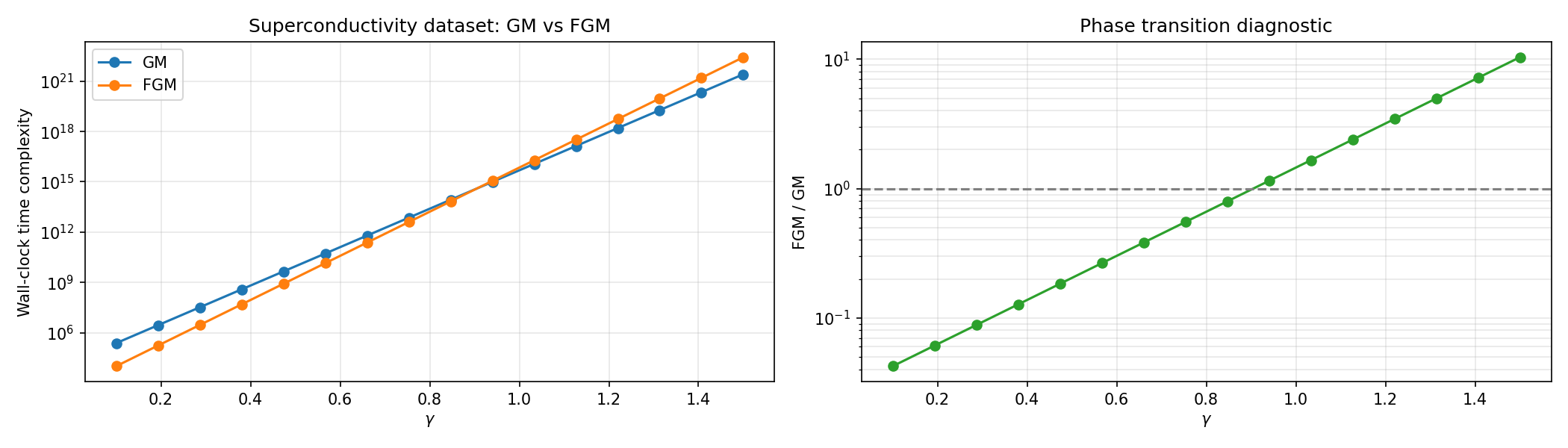}
    \caption{Comparison of the wall-clock time of GM and FGM on ridge regression over the UCI Superconductivity dataset. Left: optimal wall-clock time after tuning $\delta$. Right: the corresponding phase transition as a function of $\gamma$}
    \label{fig:gm_vs_fgm_superconductivity}
\end{figure}

\subsection{Additional details for the GM vs.\ FGM experiments}
\label{app:gm-fgm-details}

We compared standard gradient descent (GM) and the fast gradient method (FGM, implemented as STM in the code) on three problems: a synthetic quadratic objective, ridge regression, and regularized logistic regression. The performance criterion was the wall-clock time required to reach a prescribed target accuracy $\varepsilon$.

In all experiments, the parameter $\gamma$ was varied over 10 uniformly spaced values between $0.1$ and $1.5$. For each value of $\gamma$, we selected a single fixed noise level $\delta$ used throughout the whole run. The value of $\delta$ was chosen by grid search over 300 logarithmically spaced points between $10^{-15}$ and $10^{-2}$, with the search truncated at the corresponding theoretical threshold: $\delta \le \mu \varepsilon /(Ld)$ for GM and $\delta \le \mu^{3/2}\varepsilon /(L^{3/2}d)$ for FGM. For every candidate $\delta$, we ran the method and recorded the time needed to reach accuracy $\varepsilon$; the best such wall-clock time was reported.

For the synthetic quadratic experiment, the objective was $f(x) = \frac{1}{2}x^\top Qx$, where $Q$ is diagonal with eigenvalues ranging from $\mu=1$ to $L=10$, and the dimension is $d=20$. We used coordinate-wise forward finite differences, with gradient estimate $\tilde{\nabla}f(x)_i = \bigl(\hat f(x + h e_i) - \hat f(x)\bigr)/h$, where $h = 2\sqrt{\delta/L}$ was chosen according to the theory. The target accuracy was $\varepsilon = 10^{-6}$. The noisy oracle used a deterministic adversarial perturbation with sign given by $\operatorname{sign}(\sin(\sum_i x_i))$. The results are shown in Figure~\ref{fig:gm_vs_fgm_quadratic}.

For ridge regression, we considered $f(w) = \frac{1}{2n}\|Xw-Y\|_2^2 + \frac{\lambda}{2}\|w\|_2^2$ on the UCI Superconductivity dataset with 81 features.\footnote{\url{https://archive.ics.uci.edu/dataset/464/superconductivty+data}} The target accuracy was $\varepsilon = 10^{-8}$, and the search over $\delta$ used the same logarithmic grid as above. The results are shown in Figure~\ref{fig:gm_vs_fgm_superconductivity}; the empirical phase transition occurs around $\gamma \approx 0.9$.

For regularized logistic regression, we used $f(x) = \frac{1}{n}\sum_{i=1}^{n}\log\bigl(1+\exp(-y_i a_i^\top x)\bigr) + \frac{\lambda}{2}\|x\|^2$ on the \texttt{digits} dataset from \texttt{scikit-learn}, converted into a binary classification problem by grouping digits $0$--$4$ into one class and digits $5$--$9$ into the other.\footnote{\url{https://scikit-learn.org/1.5/auto_examples/datasets/plot_digits_last_image.html}} The target accuracy was $\varepsilon = 10^{-4}$, and $\delta$ was tuned over the same grid. The results are shown in Figure~\ref{fig:gm_vs_fgm_log_reg}; in this case, the phase transition appears around $\gamma \approx 0.6$.

\begin{figure}[H]
    \centering
    \includegraphics[width=\textwidth]{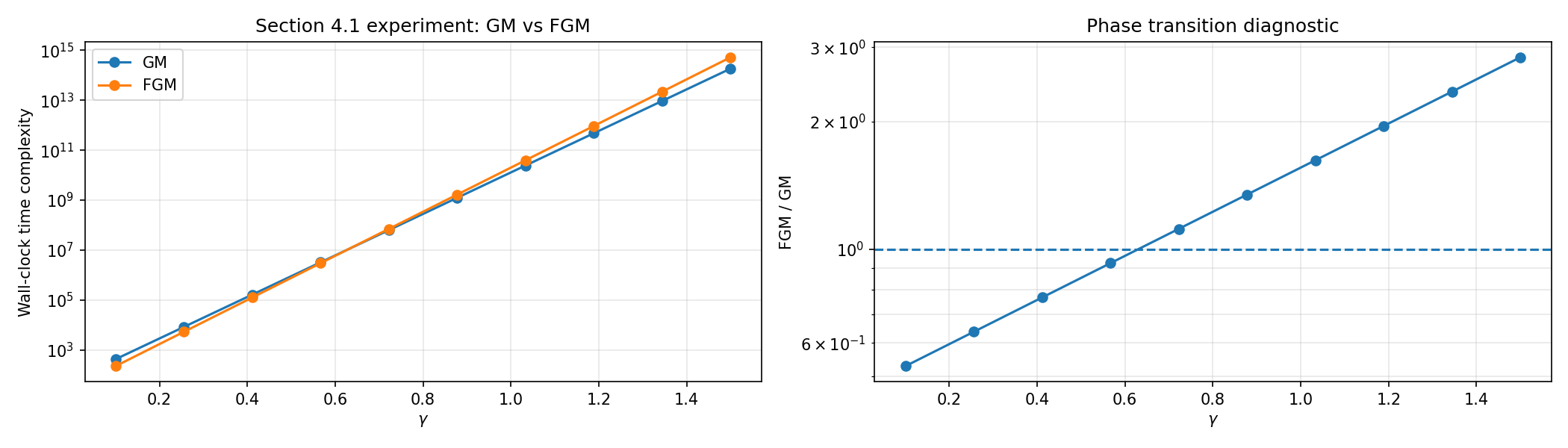}
    
    \caption{Comparison of GM and FGM on the quadratic objective.}
    \label{fig:gm_vs_fgm_quadratic}
\end{figure}

\begin{figure}[H]
    \centering
    \includegraphics[width=\textwidth]{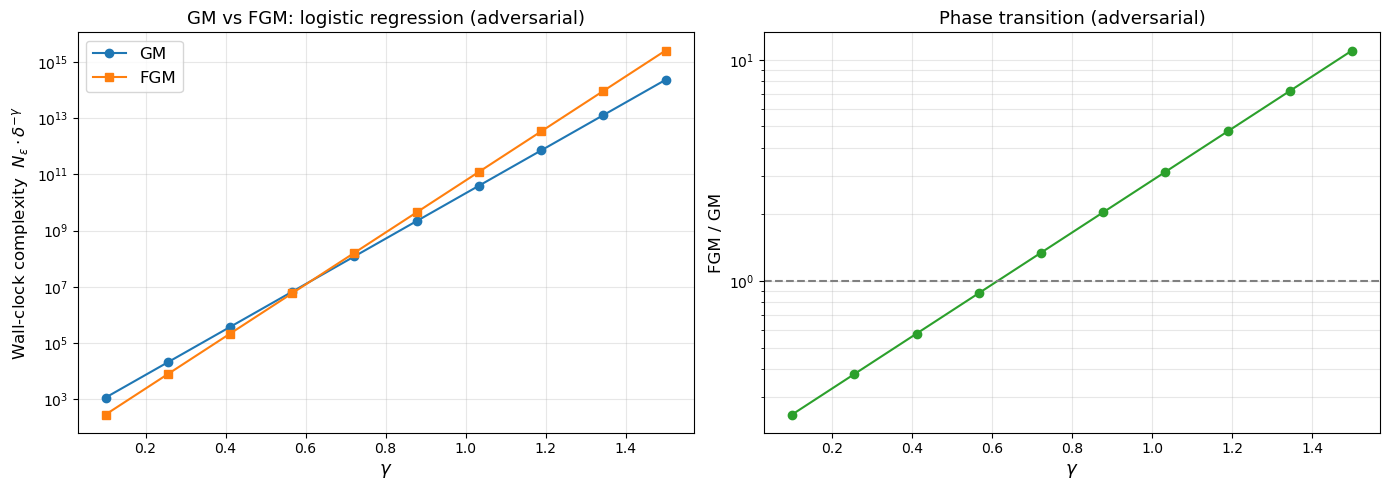}
    \caption{Comparison of the wall-clock time of GM and FGM on regularized logistic regression.}
    \label{fig:gm_vs_fgm_log_reg}
\end{figure}

\subsection{Comparison of fidelity allocation strategies}
\label{sec:exp_comparison}
This section illustrates the results from Sec.~\ref{sec:master_lemma}. We consider FGM on $f(x) = \frac{\mu}{2}x_1^2 + \frac{L}{2}x_2^2$, under assumption of oracle with adversarial noise (Assumption~\ref{asm:adv_noise}). We
compare two schedules: $\delta_k = \delta_0$ and $\delta_k = \delta_{k-1} \left(1 + \frac{\mu}{4L} + \sqrt{\frac{\mu}{2L}}\right)^{-\frac{1}{p+\gamma}}$ (motivated by Lemma~\ref{lemma:master}). Fig.~\ref{fig:adaptive_strategies_comparison} shows that the adaptive schedule consistently outperforms the fixed-noise baseline. Specifically, the optimal $\delta_0$ is larger for the adaptive strategy, and this strategy achieves lower wall-clock time across the range of target accuracies we tested. However it's important to note that the difference is not so significant which is consistent with the theory (Prop.~\ref{prop:geometric_schedule_gain}). Full details of the noise model and the tuning procedure are given in Appx.~\ref{app:delta-strategies-details}.

\begin{figure}[H]
    \centering
    \includegraphics[width=\textwidth]{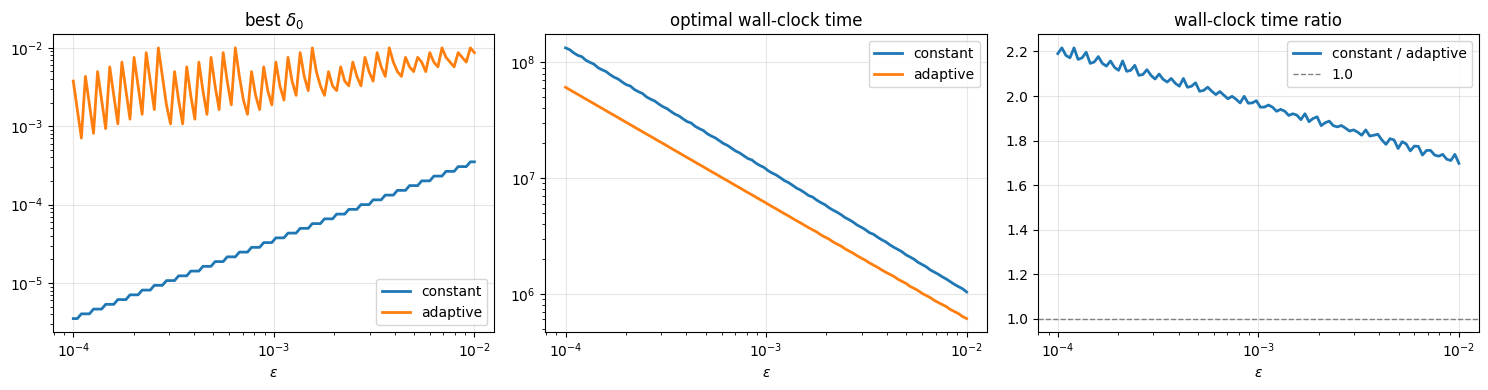}
    
    \caption{Comparison of fidelity allocation strategies on each iteration}
    \label{fig:adaptive_strategies_comparison}
\end{figure}

\subsection{Additional details for the scheduling experiments}
\label{app:delta-strategies-details}

In the second group of experiments, we considered the two-dimensional quadratic objective $f(x)=\frac{\mu}{2}x_1^2+\frac{L}{2}x_2^2$. The gradient was approximated using central finite differences:
\[
\tilde{\nabla}f(x)=\left(
\frac{\hat f(x+h e_1)-\hat f(x-h e_1)}{2h},
\frac{\hat f(x+h e_2)-\hat f(x-h e_2)}{2h}
\right).
\]

The noise model was greedy and adversarial. At every iteration, the next point $x_{k+1}$ depends on four noisy oracle values, corresponding to the points $x+h e_1$, $x-h e_1$, $x+h e_2$, and $x-h e_2$. Since each noisy value can vary independently within the interval $[f(\cdot)-\delta,\, f(\cdot)+\delta]$, the adversary has four degrees of freedom and chooses the perturbation that maximizes $f(x_{k+1})$ at the next iterate.

Because $x_{k+1}$ is obtained from the finite-difference estimate through an affine transformation, the resulting objective is a convex continuous function of these four perturbations. Therefore, its maximum over the box $[-\delta,\delta]^4$ is attained at a vertex, so it is sufficient to enumerate all $16$ vertices and choose the worst one.

Figure~\ref{fig:noise_convergence} compares the convergence behavior of GM and FGM under this worst-case noise model, under uniformly random noise from $[-\delta,\delta]$, and in the noiseless setting.

\begin{figure}[!t]
    \centering
    \includegraphics[width=\textwidth]{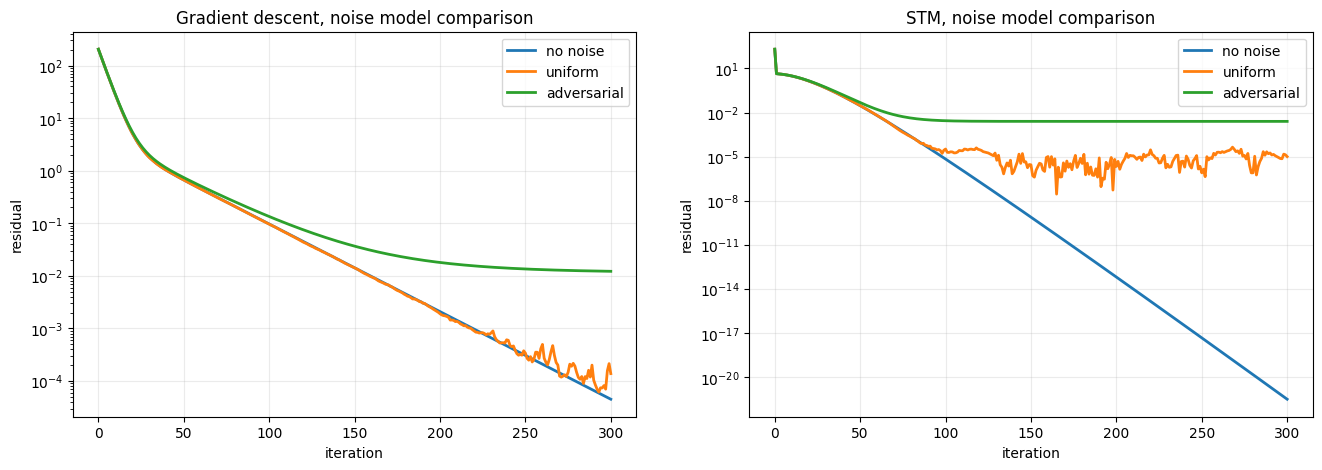}
    \caption{Comparison of convergence in function residual under different noise models.}
    \label{fig:noise_convergence}
\end{figure}

For this noise model, we considered the following strategies for choosing $\delta_k$ at each step of FGM: $\delta_k = \delta_0$, and
\[
\delta_k = \delta_{k-1} \left(1 + \frac{\mu}{4L} + \sqrt{\frac{\mu}{2L}}\right)^{-\frac{1}{1+\gamma}},
\]
with $\delta_0$ selected over a logarithmic grid from $10^{-8}$ to $10^{-2}$.

We now introduce an adaptive strategy motivated by the master lemma. Recall that the convergence rate of the accelerated method (FGM) \citep{vasin2023accelerated} with a noisy gradient, where the gradient is approximated using one of the schemes
\[
\tilde{\nabla}f(x)_i = \frac{\hat{f}(x+he_i)-\hat{f}(x)}{h}
\qquad \text{or} \qquad
\tilde{\nabla}f(x)_i = \frac{\hat{f}(x+he_i)-\hat{f}(x-he_i)}{2h},
\]
for $N$ iterations with varying noise levels, has the form
\[
f(x)-f(x^*) \lesssim LR^2\exp\left(-\frac{1}{2}\sqrt{\frac{\mu}{2L}}N\right) + \sum_{i=0}^N \frac{dL}{\mu} \left(1 + \frac{\mu}{4L} + \sqrt{\frac{\mu}{2L}}\right)^{i-N}\delta_i.
\]
According to the master lemma,
\[
\delta_i^*(N) = \left( \frac{\epsilon}{A_N} \right) \alpha_i^{-\frac{1}{1+\gamma}},
\quad
A_N := \sum_{j=1}^N \alpha_j^{\frac{\gamma}{1+\gamma}}.
\]
Therefore, in the accelerated setting,
\[
\frac{\delta_{i+1}^*(N)}{\delta_i^*(N)}
=
\left(\frac{\alpha_i}{\alpha_{i+1}}\right)^{\frac{1}{1+\gamma}}
=
\left(\frac{1}{1 + \frac{\mu}{4L} + \sqrt{\frac{\mu}{2L}}}\right)^{\frac{1}{1+\gamma}}.
\]

This relation motivates the adaptive strategy
\[
\delta_k = \delta_{k-1} \left(1 + \frac{\mu}{4L} + \sqrt{\frac{\mu}{2L}}\right)^{-\frac{1}{1+\gamma}}.
\]
As before, we tune $\delta_0$ over the same logarithmic grid. Figure~\ref{fig:adaptive_strategies_comparison}
shows three plots. The left panel illustrates how the optimal initial noise depends on the target accuracy. As expected, when $\delta$ decreases across iterations, larger values of the initial noise level $\delta_0$ become preferable. The middle panel shows that the adaptive strategy outperforms the strategy with fixed $\delta$ at every iteration. Finally, the right panel reports the ratio between the runtime of the strategy $\delta_k = \delta_0$ and that of the adaptive strategy. At the same time, this plot indicates that the gain from adaptivity remains rather modest, which is consistent with the theory:

\begin{proposition}[Logarithmic gain for geometrically decaying coefficients]
\label{prop:geometric_schedule_gain}
Fix $N$ and assume that the fidelity-separable channel has coefficients
\[
    \alpha_k(N,\Theta)=Cq^{N-k}, \qquad k=1,\ldots,N,
\]
where $C>0$ and
\[
    q=\frac{1}{1+\frac{\mu}{4L}+\sqrt{\frac{\mu}{2L}}}\in(0,1).
\]
Let
\[
    T_{\rm total}^{\rm var}(N)
\]
denote the optimal wall-clock time under the time-varying fidelity schedule of
Prop.~\ref{lemma:master}, and let
\[
    T_{\rm total}^{\rm unif}(N)
\]
denote the optimal wall-clock time under the uniform restriction
$\delta_1=\cdots=\delta_N$. If
\[
    N \asymp \sqrt{\frac{L}{\mu}}\,
    \log\!\left(\frac{LR^2}{\varepsilon}\right),
\]
then, in the high-precision regime,
\[
    \frac{T_{\rm total}^{\rm unif}(N)}
         {T_{\rm total}^{\rm var}(N)}
    =
    O\!\left(
    \log\!\left(\frac{LR^2}{\varepsilon}\right)
    \right).
\]
Equivalently, the time-varying fidelity schedule improves the best uniform
schedule by at most a logarithmic factor for this geometric coefficient profile.
\end{proposition}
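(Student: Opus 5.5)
Both optimal times have explicit closed forms, so the plan is to compute their ratio directly. By Prop.~\ref{lemma:master} and Cor.~\ref{corr:uniform}, at fixed $N$ and with the same slack $\epsilon$,
\[
T^{\rm var}_{\rm total}(N)=\epsilon^{-\gamma/p}A_N^{(p+\gamma)/p},\qquad T^{\rm unif}_{\rm total}(N)=N\epsilon^{-\gamma/p}\tilde A_N^{\gamma/p}.
\]
Hence
\[
\frac{T^{\rm unif}_{\rm total}(N)}{T^{\rm var}_{\rm total}(N)}=\frac{N\,\tilde A_N^{\gamma/p}}{A_N^{(p+\gamma)/p}},
\]
and the factor $\epsilon^{-\gamma/p}$ cancels. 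The whole statement thus reduces to estimating two geometric sums.

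\textbf{Step 1: the two sums.} Set $s:=\gamma/(p+\gamma)\in(0,1)$. Substituting $\alpha_k=Cq^{N-k}$ gives
\[
\tilde A_N=C\sum_{j=0}^{N-1}q^j\le \frac{C}{1-q},\qquad A_N=C^{s}\sum_{j=0}^{N-1}q^{sj}\ge C^{s}.
\]
For $A_N$ I will not use only the trivial lower bound $1$. I will also use $A_N=C^s\frac{1-q^{sN}}{1-q^s}$, which shows $A_N\asymp C^s\min\{N,(1-q^s)^{-1}\}$.

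\textbf{Step 2: the ratio.} The powers of $C$ cancel, since $C^{\gamma/p}$ over $C^{s(p+\gamma)/p}=C^{\gamma/p}$. Therefore
\[
\frac{T^{\rm unif}}{T^{\rm var}}\lesssim \frac{N(1-q)^{-\gamma/p}}{\min\{N,(1-q^s)^{-1}\}^{(p+\gamma)/p}}.
\]
Write $\eta:=1-q\asymp\sqrt{\mu/L}$, valid since $\mu\le L$ implies $\frac{\mu}{4L}\le\sqrt{\mu/(2L)}$. Then $1-q^s\asymp s\eta$ for $s\in(0,1)$ with $\eta$ small; I will use $1-q^s\ge s(1-q)$ by concavity.

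The hypothesis $N\asymp\eta^{-1}\log(LR^2/\varepsilon)$ gives $N\gtrsim(s\eta)^{-1}$ once $\log(LR^2/\varepsilon)\gtrsim 1/s$, which holds in the high-precision regime. So the minimum equals $(1-q^s)^{-1}\asymp(s\eta)^{-1}$. The ratio becomes
\[
N\eta^{-\gamma/p}(s\eta)^{(p+\gamma)/p}=N\eta\,s^{(p+\gamma)/p}\lesssim \log(LR^2/\varepsilon),
\]
with constants depending only on $\gamma,p$.

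\textbf{Step 3: cross-check and the main obstacle.} As a sanity check I will verify the lower side, namely that the ratio is $\gtrsim 1$, using the Jensen argument stated before~\eqref{eq:benifits}. This is not needed for the $O(\cdot)$ claim.

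The delicate point is bookkeeping in the prefactor. Prop.~\ref{lemma:master} uses the slack $\epsilon$ rather than $\varepsilon$, and one must check that with $N$ as given, $\mathcal E_0(N)=LR^2e^{-cN\eta}\le\varepsilon/2$, so that $\epsilon\asymp\varepsilon$ for both schedules. Otherwise $\epsilon$ would not cancel cleanly. I will choose the constant in $N\asymp\eta^{-1}\log(LR^2/\varepsilon)$ large enough to ensure this.

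A second point to handle explicitly is the regime where $N\eta$ is comparable to $1/s$. There the bound is $O(1)$, which is still within $O(\log)$. The estimate is therefore uniform, and the logarithm arises exactly from the factor $N\eta$.
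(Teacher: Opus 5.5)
Your proposal is correct and follows essentially the same route as the paper: cancel $\epsilon^{-\gamma/p}$ and $C$, evaluate the two geometric sums, and use $1-q\asymp 1-q^{s}\asymp\sqrt{\mu/L}$ together with $N(1-q)\asymp\log(LR^2/\varepsilon)$ to reduce the ratio to $O(N(1-q))$. One small correction to your Step~3: at fixed $N$ both schedules use the identical slack $\epsilon=\varepsilon-\mathcal{E}_0(N,\Theta)$, so $\epsilon$ cancels exactly whether or not $\epsilon\asymp\varepsilon$, and you only need $\epsilon>0$.
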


\begin{proof}
For fixed $N$, Prop.~\ref{lemma:master} gives
\[
    T_{\rm total}^{\rm var}(N)
    =
    \epsilon^{-\gamma/p}
    \left(
        \sum_{k=1}^N
        \alpha_k^{\frac{\gamma}{p+\gamma}}
    \right)^{\frac{p+\gamma}{p}},
\]
where $\epsilon=\varepsilon-\mathcal E_0(N,\Theta)$ is the accuracy budget
allocated to the fidelity channel. By Cor.~\ref{corr:uniform}, the best uniform
schedule satisfies
\[
    T_{\rm total}^{\rm unif}(N)
    =
    N\epsilon^{-\gamma/p}
    \left(
        \sum_{k=1}^N\alpha_k
    \right)^{\gamma/p}.
\]
Hence the common factor $\epsilon^{-\gamma/p}$ cancels in the ratio. Put
\[
    s:=\frac{\gamma}{p+\gamma}\in(0,1).
\]
Since $\alpha_k=Cq^{N-k}$, the constant $C$ also cancels, and after changing
indices $r=N-k$ we obtain
\[
    \frac{T_{\rm total}^{\rm unif}(N)}
         {T_{\rm total}^{\rm var}(N)}
    =
    N
    \frac{
    \left(\sum_{r=0}^{N-1}q^r\right)^{\gamma/p}
    }{
    \left(\sum_{r=0}^{N-1}q^{sr}\right)^{(p+\gamma)/p}
    }.
\]
Using the geometric-sum identities,
\[
    \sum_{r=0}^{N-1}q^r
    =
    \frac{1-q^N}{1-q},
    \qquad
    \sum_{r=0}^{N-1}q^{sr}
    =
    \frac{1-q^{sN}}{1-q^s},
\]
we get
\[
    \frac{T_{\rm total}^{\rm unif}(N)}
         {T_{\rm total}^{\rm var}(N)}
    =
    N
    \frac{
        (1-q^N)^{\gamma/p}
        (1-q^s)^{(p+\gamma)/p}
    }{
        (1-q)^{\gamma/p}
        (1-q^{sN})^{(p+\gamma)/p}
    }.
\]
For the chosen value of $q$,
\[
    1-q
    =
    \frac{\frac{\mu}{4L}+\sqrt{\frac{\mu}{2L}}}
         {1+\frac{\mu}{4L}+\sqrt{\frac{\mu}{2L}}}
    =
    \Theta\!\left(\sqrt{\frac{\mu}{L}}\right)
\]
as $L/\mu\to\infty$. Moreover, since $s\in(0,1)$ is fixed,
\[
    1-q^s=\Theta(1-q)
    =
    \Theta\!\left(\sqrt{\frac{\mu}{L}}\right).
\]
Finally,
\[
    N \asymp \sqrt{\frac{L}{\mu}}
    \log\!\left(\frac{LR^2}{\varepsilon}\right)
\]
implies that $N(1-q)\asymp \log(LR^2/\varepsilon)$, so in the
high-precision regime $q^{sN}$ is bounded away from $1$ and
$(1-q^N)^{\gamma/p}\le 1$. Therefore,
\[
    \frac{T_{\rm total}^{\rm unif}(N)}
         {T_{\rm total}^{\rm var}(N)}
    =
    O\!\left(
        N
        \frac{(1-q)^{(p+\gamma)/p}}
             {(1-q)^{\gamma/p}}
    \right)
    =
    O\!\left(N(1-q)\right).
\]
Substituting $1-q=\Theta(\sqrt{\mu/L})$ yields
\[
    \frac{T_{\rm total}^{\rm unif}(N)}
         {T_{\rm total}^{\rm var}(N)}
    =
    O\!\left(
    \log\!\left(\frac{LR^2}{\varepsilon}\right)
    \right),
\]
as claimed.
\end{proof}

\section{Some definitions}

\begin{definition}[Smoothing kernel]
\label{def:smoothing_kernel} We assume that a kernel function $K : [-1, 1] \to \mathbb{R}$ is such that for a fixed $\beta \ge 2$: $\int_{-1}^1 K(r) \, dr = 0$, $ \int_{-1}^1 r K(r) \, dr = 1$, and $\int_{-1}^1 |r|^\beta |K(r)| \, dr < \infty$.
Additionally, if $\beta > 2$, we require $\int_{-1}^1 r^j K(r) \, dr = 0$ for all $j = 2, \ldots, \lfloor \beta \rfloor$.
Further, we set
\begin{equation}
    \label{eq:kappa}
    \kappa_\beta = \int |u|^\beta |K(u)| \, du, \quad \kappa = \int |K(u)|^2 \, du.
\end{equation}
\end{definition}

\section{Proofs from Sec.~\ref{sec:master_lemma}}
\label{appx:proofs_master}
\begin{proof}[Proof of Master Lemma~\ref{lemma:master}]
Since $\Phi(N, \Theta)$ does not depend on $\delta_1, \dots, \delta_N$, our goal is to minimize $\sum \delta_i^{-\gamma}$ subject to $\sum \alpha_i \delta_i^p = \varepsilon$. The Lagrangian is
$$ \mathcal{L} = \sum_{i=1}^N \delta_i^{-\gamma} + \lambda \left( \sum_{i=1}^N \alpha_i \delta_i^p - \varepsilon \right). $$
The stationarity condition $\frac{\partial \mathcal{L}}{\partial \delta_i} = 0$:
\begin{equation}
\label{eq:delta_opt_master}
-\gamma \delta_i^{-\gamma-1} + \lambda p \alpha_i \delta_i^{p-1} = 0 \implies \delta_i = \left(\frac{\gamma}{\lambda \alpha_i p}\right)^{\frac{1}{p+\gamma}}.
\end{equation}
Substituting the equation for $\delta_i$ into the $\lambda$-constraint, and then substituting $\delta_i^*$ into the objective function, yields \eqref{eq:time_adapt}.  
\end{proof}



\begin{lemma}
    \label{lemma:master_aux}
    Let $c(\delta) := \max \{\delta^{-\gamma}, 1\}$ for some $\gamma > 0$ and $\delta > 0$.
    Fix $N$ and assume that the bound $\mathcal{E}(N,\{\delta_k\}_{k\in [N]}, \Theta)$ admits a decomposition
    \[
    \mathcal{E}(N,\{\delta_k\}_{k\in [N]}, \Theta) = \Phi(N, \Theta) +
    \sum^N_{k=1}\alpha_k \delta^p_k
    \]
    with $\alpha_k$ being  depend on the choice of gradient estimator, noise model and optimization algorithm. Let $\mathcal{E}(N,\{\delta_k\}_{k\in [N]}, \Theta) \lesssim \varepsilon$.
    Denote
    \[
    K(N, m) := \left(\frac{\varepsilon - \sum^N_{i=m+1} \alpha_i}{\sum_{i=1}^m \alpha^{-\frac{\gamma}{p+\gamma}} } \right)^{\frac{1}{p}}.
    \]
    Then we get the following solution
    \[
    \delta_k(N) = 
    \begin{cases}
        K(N, m^*) \alpha^{-\frac{1}{p+\gamma}}_k, &~\text{if}~k\le m^*\\
        1, &~\text{if}~k > m^*,
    \end{cases}
    \]
    where $m^*$ is the smallest $m$ satisfying
    \[
    K(N, m)\alpha^{-\frac{1}{p+\gamma}}_m < 1 \quad \text{and} \quad K(m)\alpha^{-\frac{1}{p+\gamma}}_{m+1} \ge 1.
    \]
\end{lemma}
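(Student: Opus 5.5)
The plan is to reduce the problem to a convex program with box constraints and read the answer off the KKT conditions. This follows the proof of Prop.~\ref{lemma:master}, with one new ingredient: a cap at $\delta=1$.

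\textbf{Reduction to a box-constrained program.} Since $c(\delta)=\max\{\delta^{-\gamma},1\}$ equals $1$ for every $\delta\ge 1$, taking $\delta_k>1$ never lowers the cost. It only raises the error term $\alpha_k\delta_k^p$. So without loss of generality we may restrict to $\delta_k\in(0,1]$, where $c(\delta_k)=\delta_k^{-\gamma}$. Write $\epsilon:=\varepsilon-\Phi(N,\Theta)>0$ for the accuracy budget left for the fidelity channel; this is the quantity playing the role of $\varepsilon$ in $K(N,m)$. The problem becomes
\[
\min_{\delta\in(0,1]^N}\ \sum_{k=1}^N \delta_k^{-\gamma}\quad\text{subject to}\quad \sum_{k=1}^N\alpha_k\delta_k^p\le \epsilon .
\]
Substitute $u_k=\delta_k^p\in(0,1]$. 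The objective becomes $\sum_k u_k^{-\gamma/p}$, which is strictly convex and decreasing in each coordinate. The constraint is linear. Slater's condition holds because $\epsilon>0$. Hence the KKT conditions are necessary and sufficient, and the minimizer is unique. Because the objective is decreasing, the budget constraint is active at the optimum, unless the all-ones point $\delta\equiv 1$ is already feasible, which is the trivial case $m^*=0$.

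\textbf{KKT solution.} Attach a multiplier $\lambda>0$ to the budget constraint and multipliers $\nu_k\ge0$ to the caps $u_k\le1$. Stationarity, as in \eqref{eq:delta_opt_master}, gives the clipped formula
\[
\delta_k=\min\Bigl\{1,\ \bigl(\gamma/(\lambda p\alpha_k)\bigr)^{1/(p+\gamma)}\Bigr\}.
\]
So a coordinate is capped exactly when $\alpha_k$ is small. Without loss of generality, relabel the indices so that $\alpha_1\ge\dots\ge\alpha_N$; the statement implicitly assumes this ordering, since it caps the tail $k>m^*$. Then the uncapped coordinates form a prefix $\{1,\dots,m\}$, and on it $\delta_k=K\alpha_k^{-1/(p+\gamma)}$ for a common constant $K=(\gamma/(\lambda p))^{1/(p+\gamma)}$. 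Insert this into the active budget constraint $\sum_{k\le m}\alpha_kK^p\alpha_k^{-p/(p+\gamma)}+\sum_{k>m}\alpha_k=\epsilon$. Using $\alpha_k^{1-p/(p+\gamma)}=\alpha_k^{\gamma/(p+\gamma)}$, this gives
\[
K^p=\frac{\epsilon-\sum_{i>m}\alpha_i}{\sum_{i\le m}\alpha_i^{\gamma/(p+\gamma)}},
\]
which is $K(N,m)$. The exponent in the denominator of $K(N,m)$ as printed should read $+\tfrac{\gamma}{p+\gamma}$; I would correct this in passing.

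\textbf{Choosing $m^*$ (main obstacle).} The remaining step is to show that the consistency conditions single out $m^*$. Those conditions are $K(N,m)\alpha_m^{-1/(p+\gamma)}<1$, so that the last uncapped coordinate is indeed below the cap, and $K(N,m)\alpha_{m+1}^{-1/(p+\gamma)}\ge1$, so that the first capped coordinate would violate the cap if left uncapped. The "$K(m)$" in the second condition of the statement should read $K(N,m)$.

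I would argue via monotonicity in $\lambda$. Let $D(\lambda)=\sum_k\alpha_k\min\{1,(\gamma/(\lambda p\alpha_k))^{p/(p+\gamma)}\}$. This function is continuous and strictly decreasing while any coordinate is uncapped. It tends to $\sum_k\alpha_k$ as $\lambda\to0$ and to $0$ as $\lambda\to\infty$. If $\sum_k\alpha_k>\epsilon$, which is exactly the case where some coordinate must drop below $1$, then $D(\lambda)=\epsilon$ has a unique root $\lambda^*$. As $\lambda$ decreases, the set of capped indices grows as a suffix in the $\alpha$-ordering. Therefore $\lambda^*$ determines a unique cut $m$ satisfying both inequalities, and it is the smallest $m$ passing the test. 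I would verify that the two inequalities are equivalent to $\lambda^*$ lying in the interval on which exactly $m$ coordinates are uncapped, and that they are also equivalent to $\nu_k\ge0$ for the capped indices.

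Finally, uniqueness of the KKT point under strict convexity identifies this candidate as the optimum. The resulting cost is $\epsilon_m^{-\gamma/p}A_m^{(p+\gamma)/p}+(N-m^*)$, where $\epsilon_m=\epsilon-\sum_{i>m^*}\alpha_i$ and $A_m=\sum_{i\le m^*}\alpha_i^{\gamma/(p+\gamma)}$. This is the form that yields the remark's claim that the asymptotic scaling of Prop.~\ref{lemma:master} is unchanged.
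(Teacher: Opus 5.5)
Your proposal is correct and follows the paper's own argument. Both set up a Lagrangian with multipliers for the budget and for the caps $\delta_i\le 1$, use stationarity to get $\delta_i=K\alpha_i^{-1/(p+\gamma)}$ on uncapped coordinates and $\lambda_0 p\alpha_i\le\gamma$ on capped ones, sort, and substitute into the active budget to obtain $K(N,m)$ and the two consistency inequalities for $m^*$. What you add fills in steps the paper only asserts: convexity via $u_k=\delta_k^p$, Slater and uniqueness, and the monotonicity of $D(\lambda)$ showing that the two inequalities pick out exactly one cut. Your corrections are also right: the denominator exponent should be $+\tfrac{\gamma}{p+\gamma}$, $K(m)$ should read $K(N,m)$, and the budget should be $\varepsilon-\Phi$.
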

\begin{proof}
The Lagrangian is
$$ \mathcal{L} = \sum_{i=1}^N \delta_i^{-\gamma} + \lambda_0 \left( \sum_{i=1}^N \alpha_i \delta_i^p - \varepsilon \right) + \sum^N_{i=1} \lambda_i( \delta_i - 1) - \sum^N_{i=1} \tilde{\lambda}_i \delta_i. $$

If $0 < \delta_i < 1$, \eqref{eq:delta_opt_master} holds. Let 
\[
K:= \left(\frac{\gamma}{\lambda_0 p}\right)^{\frac{1}{p+\gamma}}.
\]
If
$\delta_i = 1$, then $\tilde{\lambda}_i = 0$ and 
$$-\gamma + \lambda_0\alpha_i p + \lambda_i = 0
\;\Rightarrow\;
\lambda_i = \gamma-\lambda_0\alpha_i p \ge 0
\;\Rightarrow\;
\lambda_0\alpha_i p \le \gamma.$$

Next, let us rearrange $\delta_{i}$ in increasing order, and let $m$ be s.t. $\delta_{m} \in (0, 1)$ and $\delta_{m+1} = 1$ ($m$ is not known so far). Thus,
$$ K^p \sum_{i=1}^m \alpha_i^{1 -\frac{p}{p+\gamma}}
+
\sum_{i=m+1}^N \alpha_i
\quad \le \varepsilon
\;\Rightarrow\; K(m) := \left(\frac{\varepsilon - \sum^N_{i=m+1} \alpha_i}{\sum_{i=1}^m \alpha^{-\frac{\gamma}{p+\gamma}} } \right)^{\frac{1}{p}}
$$
Finally, using the explicit form of $\alpha_i$, we have to find the smallest $m$, s.t.
\[
K(m)\alpha^{-\frac{1}{p+\gamma}}_m < 1 \quad \text{and} \quad K(m)\alpha^{-\frac{1}{p+\gamma}}_{m+1} \ge 1.
\]
\end{proof}

\begin{corollary}[Large $N$ full version]
\label{cor:large_N_explicit}
Under Prop.~\ref{lemma:master}, assume that, as $N\to\infty$,
\[
\mathcal E_0(N,\Theta)\asymp N^{-\beta},
\qquad
A_N(\gamma):=
\sum_{j=1}^N \alpha_j(N,\Theta)^{\gamma/(p+\gamma)}
\asymp (\gamma)N^\rho,
\]
with $\beta>0$ and $\rho>0$. Let
\[
D_\gamma:=\rho(p+\gamma)+\beta\gamma .
\]
Then the continuous interior optimizer is
\[
N^*
=
\left(
\frac{C_0D_\gamma}
{\rho(p+\gamma)\varepsilon}
\right)^{1/\beta},
\]
and
\[
\delta_k^*
=
\left(
\frac{\beta\gamma\,\varepsilon}
{D_\gamma C_A(\gamma)(N^*)^\rho}
\right)^{1/p}
\alpha_k(N^*,\Theta)^{-1/(p+\gamma)} .
\]

For the uniform schedule $\delta_k\equiv\delta$, assume that
\[
\widetilde A_N:=
\sum_{j=1}^N\alpha_j(N,\Theta)
\sim \widetilde C_A N^\sigma,
\]
where $\sigma\in\mathbb R$ and $p+\sigma\gamma>0$. Let
\[
\widetilde D_\gamma:=p+\sigma\gamma+\beta\gamma .
\]
Then the continuous interior optimizer under the uniform restriction is
\[
\widetilde N^*
=
\left(
\frac{C_0\widetilde D_\gamma}
{(p+\sigma\gamma)\varepsilon}
\right)^{1/\beta},
\quad
\delta^*
=
\left(
\frac{\beta\gamma\,\varepsilon}
{\widetilde D_\gamma\widetilde C_A(\widetilde N^*)^\sigma}
\right)^{1/p}.
\]
For integer horizons, the displayed values are continuous relaxations; the
integer optimizer is obtained by minimizing the same one-dimensional objective
over feasible $N\in\mathbb N$, which does not affect the leading
$\varepsilon$-scaling.
\end{corollary}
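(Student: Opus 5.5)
The plan is to reduce the joint problem over $(N,\{\delta_k\})$ to a one-dimensional problem in $N$ using the fixed-$N$ solution of Prop.~\ref{lemma:master}, and then optimize that function of $N$ by calculus.

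\textbf{Step 1: reduce to one variable.} For fixed $N$, I set the fidelity budget to $\epsilon = \varepsilon - \mathcal{E}_0(N,\Theta)$. I then replace $\mathcal{E}_0(N,\Theta)$ by its asymptotic form $C_0 N^{-\beta}$ and $A_N$ by $C_A(\gamma)N^\rho$; this is the reading of the $\asymp$ hypothesis, with an implicit constant $C_A(\gamma)$. Prop.~\ref{lemma:master} then gives
\[
T(N) = \bigl(\varepsilon - C_0 N^{-\beta}\bigr)^{-\gamma/p}\, C_A(\gamma)^{(p+\gamma)/p}\, N^{\rho(p+\gamma)/p}.
\]
This is defined on the feasible region $C_0N^{-\beta}<\varepsilon$. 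As $N$ decreases to the boundary, $T(N)\to\infty$, and $T(N)\to\infty$ as $N\to\infty$ because $\rho>0$. Hence an interior minimizer exists.

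\textbf{Step 2: stationarity in $N$.} I differentiate $\log T$:
\[
\frac{d}{dN}\log T = -\frac{\gamma}{p}\cdot\frac{\beta C_0 N^{-\beta-1}}{\varepsilon - C_0N^{-\beta}} + \frac{\rho(p+\gamma)}{p\,N} = 0.
\]
This gives $\rho(p+\gamma)\bigl(\varepsilon - C_0N^{-\beta}\bigr) = \beta\gamma C_0 N^{-\beta}$, that is, $C_0N^{-\beta}\,D_\gamma = \rho(p+\gamma)\varepsilon$. Solving,
\[
N^* = \left(\frac{C_0 D_\gamma}{\rho(p+\gamma)\varepsilon}\right)^{1/\beta}.
\]
The stationary point is unique, because the equation is monotone in $N^{-\beta}$. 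It is therefore the minimizer. Substituting back gives $\epsilon^* = \varepsilon - C_0(N^*)^{-\beta} = \beta\gamma\varepsilon/D_\gamma$. Plugging $\epsilon^*$ and $A_{N^*} = C_A(\gamma)(N^*)^\rho$ into the formula for $\delta_i^*$ in Prop.~\ref{lemma:master} yields the stated $\delta_k^*$.

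\textbf{Step 3: uniform schedule.} I repeat the argument with Cor.~\ref{corr:uniform}. The objective is
\[
T^{\rm unif}(N) = N\bigl(\varepsilon - C_0N^{-\beta}\bigr)^{-\gamma/p}\,\widetilde C_A^{\gamma/p} N^{\sigma\gamma/p},
\]
so the power of $N$ is now $(p+\sigma\gamma)/p$. The stationarity equation becomes $(p+\sigma\gamma)(\varepsilon - C_0N^{-\beta}) = \beta\gamma C_0N^{-\beta}$. This yields $\widetilde N^*$ with $\widetilde D_\gamma$, and $\epsilon = \beta\gamma\varepsilon/\widetilde D_\gamma$, which gives $\delta^* = (\epsilon/\widetilde A_{\widetilde N^*})^{1/p}$.

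The hypothesis $p+\sigma\gamma>0$ is exactly what makes $T^{\rm unif}\to\infty$ as $N\to\infty$. Without it the objective decreases in $N$, so no interior optimum exists; this matches the preceding remark about $\delta^*=1$.

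\textbf{Main obstacle: rigour of the asymptotic replacement.} The computation above treats $\asymp$ as exact equality with constants $C_0$, $C_A$. Under only two-sided bounds, the same argument gives $N^*$ and $\delta^*$ up to constant factors. To get the displayed exact constants, I would assume the leading-order asymptotics hold with $\sim$ and argue as follows. The perturbed objective $\log T$ differs from the model one by $o(1)$ uniformly on compact ranges of $N\varepsilon^{1/\beta}$, so the minimizer converges after this rescaling. Finally, the integer rounding of $N$ changes $T$ only by a factor $1+O(1/N)$, which justifies the closing sentence of the statement.
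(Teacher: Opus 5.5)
Your proposal is correct and follows the paper's proof essentially step for step. You reduce to a one-dimensional objective in $N$ via Prop.~\ref{lemma:master} (resp.\ Cor.~\ref{corr:uniform}), set the logarithmic derivative to zero, and substitute the residual budget $\beta\gamma\varepsilon/D_\gamma$ (resp.\ $\beta\gamma\varepsilon/\widetilde D_\gamma$) back into the fixed-$N$ allocation. Your coercivity-plus-uniqueness argument for why the critical point is the minimizer, and your note that the exact constants need $\sim$ rather than $\asymp$, are refinements the paper leaves implicit.
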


\begin{proof}
We first consider the nonuniform case. For a fixed feasible horizon $N$, define
\[
\eta_N:=\varepsilon-\mathcal E_0(N,\Theta)
=
\varepsilon-C_0N^{-\beta}.
\]
By Prop.~\ref{lemma:master}, the fixed-$N$ optimal fidelity allocation is
\[
\delta_k^*(N)
=
\left(
\frac{\eta_N}{A_N(\gamma)}
\right)^{1/p}
\alpha_k(N,\Theta)^{-1/(p+\gamma)},
\]
and the corresponding wall-clock cost is
\[
T^*(N)
\asymp
\eta_N^{-\gamma/p}A_N(\gamma)^{(p+\gamma)/p}.
\]
Using $A_N(\gamma)\sim C_A(\gamma)N^\rho$, the continuous relaxation is
\[
T^*(N)
\asymp
(\varepsilon-C_0N^{-\beta})^{-\gamma/p}
\bigl(C_A(\gamma)N^\rho\bigr)^{(p+\gamma)/p}.
\]
Equivalently,
\[
\log T^*(N)
=
-\frac{\gamma}{p}\log(\varepsilon-C_0N^{-\beta})
+
\frac{\rho(p+\gamma)}{p}\log N
+
\mathrm{const}.
\]
The first-order condition gives
\[
\frac{\rho(p+\gamma)}{pN}
=
\frac{\gamma}{p}
\frac{\beta C_0N^{-\beta-1}}
{\varepsilon-C_0N^{-\beta}}.
\]
Hence
\[
\rho(p+\gamma)(\varepsilon-C_0N^{-\beta})
=
\beta\gamma C_0N^{-\beta}.
\]
Therefore
\[
C_0N^{-\beta}
=
\frac{\rho(p+\gamma)}
{\rho(p+\gamma)+\beta\gamma}\varepsilon
=
\frac{\rho(p+\gamma)}{D_\gamma}\varepsilon,
\]
which yields
\[
N^*
=
\left(
\frac{C_0D_\gamma}
{\rho(p+\gamma)\varepsilon}
\right)^{1/\beta}.
\]
At this horizon,
\[
\eta_{N^*}
=
\varepsilon-C_0(N^*)^{-\beta}
=
\frac{\beta\gamma}{D_\gamma}\varepsilon.
\]
Substituting this and
\[
A_{N^*}(\gamma) \asymp C_A(\gamma)(N^*)^\rho
\]
into the fixed-$N$ allocation formula gives
\[
\delta_k^*
=
\left(
\frac{\beta\gamma\,\varepsilon}
{D_\gamma C_A(\gamma)(N^*)^\rho}
\right)^{1/p}
\alpha_k(N^*,\Theta)^{-1/(p+\gamma)} .
\]
In particular,
\[
N^* \asymp \varepsilon^{-1/\beta},
\qquad
\delta_k^*
\asymp 
\varepsilon^{(1+\rho/\beta)/p}
\alpha_k(N^*,\Theta)^{-1/(p+\gamma)}.
\]

We now consider the uniform restriction $\delta_k\equiv\delta$. For fixed $N$,
the error constraint becomes
\[
\mathcal E_0(N,\Theta)+\widetilde A_N\delta^p\le \varepsilon.
\]
Thus the largest admissible uniform fidelity level is
\[
\delta^*(N)
=
\left(
\frac{\varepsilon-C_0N^{-\beta}}
{\widetilde A_N}
\right)^{1/p}.
\]
The fixed-$N$ wall-clock cost is therefore
\[
T_{\rm unif}^*(N)
\asymp
N(\delta^*(N))^{-\gamma}
=
N
\left(
\frac{\widetilde A_N}
{\varepsilon-C_0N^{-\beta}}
\right)^{\gamma/p}.
\]
Using $\widetilde A_N\sim\widetilde C_A N^\sigma$, we get
\[
T_{\rm unif}^*(N)
\asymp
N
\left(
\frac{\widetilde C_A N^\sigma}
{\varepsilon-C_0N^{-\beta}}
\right)^{\gamma/p}.
\]
Equivalently,
\[
\log T_{\rm unif}^*(N)
=
\left(1+\frac{\sigma\gamma}{p}\right)\log N
-
\frac{\gamma}{p}\log(\varepsilon-C_0N^{-\beta})
+
\mathrm{const}.
\]
The condition $p+\sigma\gamma>0$ ensures that the continuous objective grows
for sufficiently large $N$, so the interior critical point gives the interior
minimizer. Differentiating gives
\[
1+\frac{\sigma\gamma}{p}
=
\frac{\beta\gamma}{p}
\frac{C_0N^{-\beta}}
{\varepsilon-C_0N^{-\beta}}.
\]
Hence
\[
C_0N^{-\beta}
=
\frac{p+\sigma\gamma}
{p+\sigma\gamma+\beta\gamma}\varepsilon
=
\frac{p+\sigma\gamma}{\widetilde D_\gamma}\varepsilon.
\]
Therefore
\[
\widetilde N^*
=
\left(
\frac{C_0\widetilde D_\gamma}
{(p+\sigma\gamma)\varepsilon}
\right)^{1/\beta},
\]
and
\[
\varepsilon-C_0(\widetilde N^*)^{-\beta}
=
\frac{\beta\gamma}{\widetilde D_\gamma}\varepsilon.
\]
Substituting into the fixed-$N$ formula for $\delta^*(N)$ yields
\[
\delta^*
=
\left(
\frac{\beta\gamma\,\varepsilon}
{\widetilde D_\gamma\widetilde C_A(\widetilde N^*)^\sigma}
\right)^{1/p}.
\]
Consequently,
\[
\widetilde N^*\asymp \varepsilon^{-1/\beta},
\qquad
\delta^*
\asymp 
\varepsilon^{(1+\sigma/\beta)/p}.
\]

The argument above covers the interior optimum only. If the resulting fidelity
level violates an admissible upper bound, e.g. $\delta^*>1$, then the optimizer
lies on the boundary of the fidelity set and the displayed interior formulas
do not apply.
\end{proof}

\subsection{Optimized nonuniform-to-uniform cost ratio.}
\label{app:optimized_ratio}

For the nonuniform allocation, Prop.~\ref{lemma:master} gives, for fixed $N$,
\[
T_{\rm total}(N)
\asymp
(\varepsilon-C_0N^{-\beta})^{-\gamma/p}
\bigl(C_A(\gamma)N^\rho\bigr)^{(p+\gamma)/p}.
\]
At the continuous optimum,
\[
N^*
=
\left(\frac{C_0D_\gamma}{\rho(p+\gamma)\varepsilon}\right)^{1/\beta},
\qquad
\varepsilon-C_0(N^*)^{-\beta}
=
\frac{\beta\gamma}{D_\gamma}\varepsilon,
\]
where $D_\gamma=\rho(p+\gamma)+\beta\gamma$. Hence
\[
T_{\rm total}(N^*)
\asymp
C_A(\gamma)^{(p+\gamma)/p}
\left(\frac{\beta\gamma}{D_\gamma}\right)^{-\gamma/p}
\left(\frac{C_0D_\gamma}{\rho(p+\gamma)}\right)^{
\frac{\rho(p+\gamma)}{\beta p}}
\varepsilon^{-D_\gamma/(\beta p)} .
\]

For the uniform allocation,
\[
T_{\rm total}^{\rm unif}(N)
\asymp
N
\left(
\frac{\widetilde C_A N^\sigma}
{\varepsilon-C_0N^{-\beta}}
\right)^{\gamma/p}.
\]
At the continuous optimum,
\[
\widetilde N^*
=
\left(
\frac{C_0\widetilde D_\gamma}
{(p+\sigma\gamma)\varepsilon}
\right)^{1/\beta},
\qquad
\varepsilon-C_0(\widetilde N^*)^{-\beta}
=
\frac{\beta\gamma}{\widetilde D_\gamma}\varepsilon,
\]
where $\widetilde D_\gamma=p+\sigma\gamma+\beta\gamma$. Therefore
\[
T_{\rm total}^{\rm unif}(\widetilde N^*)
\asymp
\widetilde C_A^{\gamma/p}
\left(\frac{\beta\gamma}{\widetilde D_\gamma}\right)^{-\gamma/p}
\left(\frac{C_0\widetilde D_\gamma}{p+\sigma\gamma}\right)^{
\frac{p+\sigma\gamma}{\beta p}}
\varepsilon^{-\widetilde D_\gamma/(\beta p)} .
\]
Dividing the two estimates yields
\[
R_{\rm opt}(\varepsilon,\gamma)
:=
\frac{T_{\rm total}(N^*)}
{T_{\rm total}^{\rm unif}(\widetilde N^*)}
\asymp
K_\gamma
\varepsilon^{(\widetilde D_\gamma-D_\gamma)/(\beta p)} .
\]
Since
\[
\widetilde D_\gamma-D_\gamma
=
p+\sigma\gamma-\rho(p+\gamma),
\]
we obtain
\[
R_{\rm opt}(\varepsilon,\gamma)
\asymp
K_\gamma
\varepsilon^{
\frac{p+\sigma\gamma-\rho(p+\gamma)}{\beta p}} .
\]
Thus $R_{\rm opt}(\varepsilon,\gamma)\to0$ as $\varepsilon\to0$ whenever
\[
\rho(p+\gamma)<p+\sigma\gamma .
\]

\section{Proof of Prop.~\ref{lem:master_decomp}}
\label{sec:proof_master_decomp}
The idea of the proof is as follows.  

\textbf{Proof sketch}
The estimate $g_k$ enters the per-step 
inequality only through the inner product $\langle g_k,v_k\rangle$ with an
$\mathcal{F}_{k-1}$-measurable vector $v_k$ (e.g.\ $v_k=x_k-x^{*}$ for GD) and through
the squared norm $\|g_k\|^2$. Taking the conditional expectation gives
$\E[\langle g_k,v_k\rangle\mid\mathcal{F}_{k-1}]
= \langle\nabla f(x_k),v_k\rangle + \langle b_k(x_k),v_k\rangle$;
Cauchy--Schwarz on the bias term produces
$\|b_k\|\cdot\|v_k\|$, while $\E[\|g_k\|^2\mid\mathcal{F}_{k-1}]\le V_k$
handles the squared-norm term. A Young-type inequality applied to any cross term involving $\|b_k\|\cdot\|v_k\|$ yields the quadratic
contribution $\|b_k\|^2$. Telescoping across iterations gives the three
sums in~\eqref{eq:err_accumulation_2}.

Table~\ref{tab:drift_coefficients} provides specific examples of error decomposition
\eqref{eq:err_accumulation_2}. Namely, $a_k$ multiplies the linear bias term,
$c_k$ multiplies the squared bias term, and $e_k$ multiplies the second-moment
term of the stochastic gradient estimator. These coefficients depend only on
the underlying first-order method and not on the particular zeroth-order
oracle.

This separation is useful because the oracle-specific estimates enter only
through bounds on the conditional bias $b_k$ and second moment $V_k$. 
In particular,
mirror descent accumulates the bias linearly through the regret inequality,
nonconvex SGD converts the bias-gradient cross term into a squared-bias
contribution, strongly convex SGD propagates the errors through the contraction
weights, and accelerated SGD amplifies them according to the acceleration
weights.

\begin{table}[ht!]
\centering
\caption{%
  Coefficients $a_k$, $c_k$, $e_k$ in the error decomposition~\eqref{eq:err_accumulation_2}.
  A dash ``---'' indicates the term is absent.
}
\label{tab:drift_coefficients}
\smallskip
\renewcommand{\arraystretch}{1.6}
\setlength{\tabcolsep}{5pt}
\begin{tabular}{@{}lccccl@{}}
\toprule
\textbf{Method}
  & $\boldsymbol{\mathrm{Gap}_N}$
  & $\boldsymbol{a_k}$
  & $\boldsymbol{c_k}$
  & $\boldsymbol{e_k}$
  & \textbf{Ref.} \\
\midrule
Mirror Descent (cvx)
  & $\E [f(\bar{x}_N)] - f^*$
  & $\tfrac{\alpha_k D_K}{S_N}$
  & ---
  & $\tfrac{\alpha_k^2}{2 S_N}$
  & Prop.~\ref{prop:mirror_descent} \\[6pt]
SGD (nonconvex)
  & $\E[\tfrac{1}{S_N}\displaystyle\sum_k \alpha_k \|\nabla f(x_k)\|^2]$
  & ---
  & $\tfrac{\alpha_k}{S_N}$
  & $\tfrac{L\alpha_k^2}{S_N}$
  & Prop.~\ref{prop:sgd_nonconvex} \\[6pt]
SGD (\(\mu\)-str. cvx)
  & \(f(x_N)-f^*\)
  & ---
  & \(\dfrac{L\rho_N\alpha_k}{\mu\rho_{k+1}}\)
  & \(\dfrac{L\rho_N\alpha_k^2}{\rho_{k+1}}\)
  & Prop.~\ref{prop:sgd_strongly_convex} \\[6pt]
Accelerated SGD (cvx)
&
\(f(x_N^{ag})-f^*\)
&
\(\dfrac{\gamma_kR}{\beta_N\gamma_N}\)
&
\(\dfrac{\gamma_k^2}{\beta_N\gamma_N}\)
&
\(\dfrac{\gamma_k^2}{\beta_N\gamma_N}\)
&
Prop.~\ref{prop:accelerated_sgd_biased} \\
\bottomrule
\end{tabular}
 
\smallskip
\begin{minipage}{0.92\linewidth}
\small
\emph{Notation.}
$S_N := \sum_{t=0}^{N-1}\alpha_t$;\;
$\rho_{k+1} := \prod_{s=0}^{k}(1-\alpha_s\mu)$;\;
$D_K := \sup_{x\in K}\|x - x^*\|$;\;
$R := \|x_0 - x^*\|$;\;
$\beta_N\gamma_N$ is the normalization of the accelerated scheme;\;
$B$ is the batch size.
All equalities hold up to universal constants.
\end{minipage}
\end{table}

\begin{proposition}[Mirror descent with biased stochastic gradients]
\label{prop:mirror_descent}
Let $K\subset\mathbb{R}^d$ be convex and compact, and let $x^*\in K$ be a
minimizer of a convex function $f$. Consider mirror descent
\[
x_{k+1}
=
\argmin_{x\in K}
\left\{
\alpha_k\langle \hat g_k,x\rangle
+
D_\psi(x,x_k)
\right\},
\]
where $\psi$ is $1$-strongly convex with respect to a norm $\|\cdot\|$ and
$D_\psi$ is the associated Bregman divergence. Suppose
\[
\mathbb{E}[\hat g_k\mid\mathcal F_k]=\nabla f(x_k)+b_k(x_k),
\qquad
\mathbb{E}[\|\hat g_k\|_*^2\mid\mathcal F_k]\le V_k(x_k).
\]
Let $D_K:=\sup_{x\in K}\|x-x^*\|$ and $S_N:=\sum_{k=0}^{N-1}\alpha_k$.
Then the averaged point
$\bar x_N:=S_N^{-1}\sum_{k=0}^{N-1}\alpha_k x_k$ satisfies
\[
\mathbb{E}[f(\bar x_N)-f^*]
\le
\frac{D_\psi(x^*,x_0)}{S_N}
+
\sum_{k=0}^{N-1}
\left[
\frac{\alpha_k D_K}{S_N}\mathbb{E}\|b_k(x_k)\|
+
\frac{\alpha_k^2}{2S_N}\mathbb{E}V_k(x_k)
\right].
\]
Thus, in~\eqref{eq:err_accumulation_2},
\[
a_k=\frac{\alpha_kD_K}{S_N},\qquad
c_k=0,\qquad
e_k=\frac{\alpha_k^2}{2S_N}.
\]
\end{proposition}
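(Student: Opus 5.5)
We prove the bound through the standard three-point (Bregman) inequality for one mirror step, taking expectations and summing with weights. The first-order optimality condition for $x_{k+1}$ gives, for every $u\in K$,
\[
\alpha_k\langle \hat g_k, x_{k+1}-u\rangle \le D_\psi(u,x_k)-D_\psi(u,x_{k+1})-D_\psi(x_{k+1},x_k).
\]
We split $\langle \hat g_k,x_k-u\rangle=\langle \hat g_k,x_k-x_{k+1}\rangle+\langle \hat g_k,x_{k+1}-u\rangle$. The first piece is controlled by the Fenchel--Young inequality, $\alpha_k\langle \hat g_k,x_k-x_{k+1}\rangle\le \frac{\alpha_k^2}{2}\|\hat g_k\|_*^2+\frac12\|x_k-x_{k+1}\|^2$. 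The quadratic term is absorbed by $-D_\psi(x_{k+1},x_k)\le -\frac12\|x_{k+1}-x_k\|^2$, which follows from $1$-strong convexity of $\psi$. Choosing $u=x^*$ gives the pathwise per-step inequality
\[
\alpha_k\langle \hat g_k,x_k-x^*\rangle\le D_\psi(x^*,x_k)-D_\psi(x^*,x_{k+1})+\tfrac{\alpha_k^2}{2}\|\hat g_k\|_*^2 .
\]

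Next we take conditional expectations given $\mathcal F_k$. Because $x_k$ and $x^*$ are $\mathcal F_k$-measurable, the left side becomes $\alpha_k\langle\nabla f(x_k)+b_k(x_k),x_k-x^*\rangle$. Convexity gives $\langle\nabla f(x_k),x_k-x^*\rangle\ge f(x_k)-f^*$. For the bias term we use H\"older's inequality in the dual pairing, $-\langle b_k,x_k-x^*\rangle\le\|b_k\|_*\,\|x_k-x^*\|\le D_K\|b_k\|$. Here $\|b_k\|$ is read as the dual norm, which is the Euclidean norm in the Euclidean case the paper uses. The second-moment term is bounded by $\frac{\alpha_k^2}{2}V_k(x_k)$. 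Taking total expectations and summing over $k=0,\dots,N-1$, the Bregman terms telescope to at most $D_\psi(x^*,x_0)$, since $D_\psi\ge0$. This yields
\[
\sum_k\alpha_k\,\mathbb E[f(x_k)-f^*]\le D_\psi(x^*,x_0)+\sum_k\Bigl[\alpha_kD_K\mathbb E\|b_k(x_k)\|+\tfrac{\alpha_k^2}{2}\mathbb E V_k(x_k)\Bigr].
\]

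Finally we divide by $S_N$ and apply Jensen's inequality: $f(\bar x_N)\le S_N^{-1}\sum_k\alpha_k f(x_k)$ because $\bar x_N$ is a convex combination of points of $K$. This gives the claim, and the coefficients $a_k,c_k,e_k$ are read off directly. The only delicate point is measurability. The step sizes $\alpha_k$ must be deterministic, or at least $\mathcal F_k$-measurable, so that conditioning passes through them. The filtration convention must also match the proposition's hypothesis, which conditions on $\mathcal F_k$ with $x_k$ being $\mathcal F_k$-measurable. With that convention the cross term is handled exactly as in the proof sketch for Prop.~\ref{lem:master_decomp}, and no further obstacle arises.
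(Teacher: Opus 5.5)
Your proof is correct and follows essentially the same route as the paper: the per-step three-point inequality with $u=x^*$, conditioning on $\mathcal F_k$, convexity plus a Cauchy--Schwarz/H\"older bound on the bias term, telescoping, and Jensen for $\bar x_N$. The only difference is that you derive the three-point inequality from the first-order optimality condition and the $1$-strong convexity of $\psi$, where the paper cites it as standard, and you note that the bias must be measured in the dual norm, which the paper's Cauchy--Schwarz step leaves implicit.
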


\begin{proof}
The standard mirror-descent three-point inequality gives, for every
$x\in K$,
\[
\alpha_k\langle \hat g_k,x_k-x\rangle
\le
D_\psi(x,x_k)-D_\psi(x,x_{k+1})
+
\frac{\alpha_k^2}{2}\|\hat g_k\|_*^2 .
\]
Taking $x=x^*$ and conditional expectation with respect to $\mathcal F_k$,
\[
\alpha_k
\langle \nabla f(x_k),x_k-x^*\rangle
\le
\mathbb{E}\!\left[
D_\psi(x^*,x_k)-D_\psi(x^*,x_{k+1})
\mid \mathcal F_k
\right]
+
\frac{\alpha_k^2}{2}V_k(x_k)
+
\alpha_k\langle b_k(x_k),x^*-x_k\rangle .
\]
By convexity,
$f(x_k)-f^*\le \langle \nabla f(x_k),x_k-x^*\rangle$, and by Cauchy--Schwarz,
\[
\langle b_k(x_k),x^*-x_k\rangle
\le
D_K\|b_k(x_k)\|.
\]
Summing over $k$, taking total expectation, and using convexity once more,
$f(\bar x_N)\le S_N^{-1}\sum_k\alpha_k f(x_k)$, yields the claim.
\end{proof}

\begin{proposition}[Nonconvex SGD with biased stochastic gradients]
\label{prop:sgd_nonconvex}
Let $f$ be $L$-smooth and consider the SGD recursion
\[
x_{k+1}=x_k-\alpha_k\hat g_k,
\]
where
\[
\mathbb{E}[\hat g_k\mid\mathcal F_k]=\nabla f(x_k)+b_k(x_k),
\qquad
\mathbb{E}[\|\hat g_k\|^2\mid\mathcal F_k]\le V_k(x_k).
\]
Let $S_N:=\sum_{k=0}^{N-1}\alpha_k$. Then
\[
\frac{1}{S_N}\sum_{k=0}^{N-1}\alpha_k
\mathbb{E}\|\nabla f(x_k)\|^2
\lesssim
\frac{f(x_0)-f_{\inf}}{S_N}
+
\sum_{k=0}^{N-1}
\left[
\frac{\alpha_k}{S_N}\mathbb{E}\|b_k(x_k)\|^2
+
\frac{L\alpha_k^2}{S_N}\mathbb{E}V_k(x_k)
\right].
\]
Thus, in~\eqref{eq:err_accumulation_2},
\[
a_k=0,\qquad
c_k=\frac{\alpha_k}{S_N},\qquad
e_k=\frac{L\alpha_k^2}{S_N},
\]
up to universal constants.
\end{proposition}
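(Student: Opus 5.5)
The plan is the standard descent-lemma argument for $L$-smooth functions, with the bias routed through a Young-type inequality. The conditional bias is written as $b_k:=b_k(x_k)$, and the step sizes are assumed to satisfy $\alpha_k\le 1/L$, the usual restriction implicit in ``up to universal constants''.

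\textbf{Step 1 (one-step inequality).} By $L$-smoothness and the update $x_{k+1}=x_k-\alpha_k\hat g_k$,
\[
f(x_{k+1})\le f(x_k)-\alpha_k\langle \nabla f(x_k),\hat g_k\rangle+\tfrac{L\alpha_k^2}{2}\|\hat g_k\|^2 .
\]
Taking the conditional expectation given $\mathcal F_k$, which is legitimate because $x_k$ is $\mathcal F_k$-measurable, yields
\[
\mathbb{E}[f(x_{k+1})\mid\mathcal F_k]\le f(x_k)-\alpha_k\|\nabla f(x_k)\|^2-\alpha_k\langle\nabla f(x_k),b_k\rangle+\tfrac{L\alpha_k^2}{2}V_k(x_k).
\]

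\textbf{Step 2 (absorbing the bias).} The cross term is bounded by Young's inequality:
\[
-\langle \nabla f(x_k),b_k\rangle\le \tfrac12\|\nabla f(x_k)\|^2+\tfrac12\|b_k\|^2 .
\]
The first piece is absorbed into $-\alpha_k\|\nabla f(x_k)\|^2$, leaving $-\tfrac{\alpha_k}{2}\|\nabla f(x_k)\|^2+\tfrac{\alpha_k}{2}\|b_k\|^2$. This step is what turns the linear bias term into the squared-bias contribution, and it explains why $a_k=0$ while $c_k\propto\alpha_k$.

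\textbf{Step 3 (telescoping).} Take total expectations, sum over $k=0,\dots,N-1$, and use $f(x_N)\ge f_{\inf}$. This gives
\[
\tfrac12\sum_{k}\alpha_k\mathbb{E}\|\nabla f(x_k)\|^2\le f(x_0)-f_{\inf}+\sum_k\Big[\tfrac{\alpha_k}{2}\mathbb{E}\|b_k\|^2+\tfrac{L\alpha_k^2}{2}\mathbb{E}V_k(x_k)\Big].
\]
Multiplying by $2/S_N$ gives the claimed bound with $a_k=0$, $c_k=\alpha_k/S_N$ and $e_k=L\alpha_k^2/S_N$, up to universal constants.

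\textbf{Main obstacle.} The only delicate point is the step-size condition. Because $V_k$ bounds the full second moment and not just the variance, the $\|\hat g_k\|^2$ term does not need to be absorbed; it simply enters through $e_k$. No condition such as $\alpha_k\le 1/L$ is therefore actually required for the stated form. It is still worth stating such a condition, since otherwise $e_k$ dominates and the bound is vacuous.
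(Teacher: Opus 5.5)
Your proposal is correct and follows the paper's proof step for step: descent lemma, conditional expectation, Young's inequality to turn the bias cross term into $\tfrac{\alpha_k}{2}\|b_k(x_k)\|^2$, then telescoping against $f_{\inf}$ and dividing by $S_N$. As you note at the end, the assumption $\alpha_k\le 1/L$ is never used, and the paper does not impose it either, so you can drop it from your setup.
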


\begin{proof}
By $L$-smoothness,
\[
f(x_{k+1})
\le
f(x_k)
-\alpha_k\langle \nabla f(x_k),\hat g_k\rangle
+
\frac{L\alpha_k^2}{2}\|\hat g_k\|^2 .
\]
Taking conditional expectation gives
\[
\mathbb{E}[f(x_{k+1})\mid\mathcal F_k]
\le
f(x_k)
-\alpha_k\|\nabla f(x_k)\|^2
-\alpha_k\langle \nabla f(x_k),b_k(x_k)\rangle
+
\frac{L\alpha_k^2}{2}V_k(x_k).
\]
Using Young's inequality,
\[
-\langle \nabla f(x_k),b_k(x_k)\rangle
\le
\frac12\|\nabla f(x_k)\|^2+\frac12\|b_k(x_k)\|^2,
\]
we obtain
\[
\frac{\alpha_k}{2}\|\nabla f(x_k)\|^2
\le
f(x_k)-\mathbb{E}[f(x_{k+1})\mid\mathcal F_k]
+
\frac{\alpha_k}{2}\|b_k(x_k)\|^2
+
\frac{L\alpha_k^2}{2}V_k(x_k).
\]
Summing and dividing by $S_N$ gives the stated bound, after absorbing
absolute constants.
\end{proof}

\begin{proposition}[Strongly convex SGD with biased stochastic gradients]
\label{prop:sgd_strongly_convex}
Let \(f\) be \(\mu\)-strongly convex and \(L\)-smooth, and let
\(x^*=\arg\min_x f(x)\). Consider
\[
    x_{k+1}=x_k-\alpha_k\widehat g_k,
    \qquad 0<\alpha_k\le \frac1L,
\]
where
\[
    \mathbb E[\widehat g_k\mid \mathcal F_k]
    =
    \nabla f(x_k)+b_k(x_k),
    \qquad
    \mathbb E[\|\widehat g_k\|^2\mid \mathcal F_k]
    \le V_k(x_k).
\]
Define
\[
    \rho_0:=1,
    \qquad
    \rho_{k+1}:=\prod_{s=0}^{k}(1-\mu\alpha_s).
\]
Then
\[
\mathbb E\|x_N-x^*\|^2
\lesssim
\rho_N\|x_0-x^*\|^2
+
\rho_N
\sum_{k=0}^{N-1}
\left[
    \frac{\alpha_k}{\mu\rho_{k+1}}
    \mathbb E\|b_k(x_k)\|^2
    +
    \frac{\alpha_k^2}{\rho_{k+1}}
    \mathbb E V_k(x_k)
\right].
\]
Consequently,
\[
\mathbb E[f(x_N)-f^*]
\lesssim
L\rho_N\|x_0-x^*\|^2
+
\sum_{k=0}^{N-1}
\left[
    \frac{L\rho_N\alpha_k}{\mu\rho_{k+1}}
    \mathbb E\|b_k(x_k)\|^2
    +
    \frac{L\rho_N\alpha_k^2}{\rho_{k+1}}
    \mathbb E V_k(x_k)
\right].
\]
Thus, in the decomposition \((7)\), the corresponding coefficients are
\[
    a_k=0,
    \qquad
    c_k=\frac{L\rho_N\alpha_k}{\mu\rho_{k+1}},
    \qquad
    e_k=\frac{L\rho_N\alpha_k^2}{\rho_{k+1}},
\]
up to universal constants.
\end{proposition}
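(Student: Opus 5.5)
We prove the one-step distance recursion and then unroll it. Write $r_k := \mathbb E\|x_k-x^*\|^2$. Expanding the update,
\[
\|x_{k+1}-x^*\|^2=\|x_k-x^*\|^2-2\alpha_k\langle \widehat g_k,x_k-x^*\rangle+\alpha_k^2\|\widehat g_k\|^2 .
\]
Taking the conditional expectation given $\mathcal F_k$ replaces $\widehat g_k$ in the cross term by $\nabla f(x_k)+b_k(x_k)$, and bounds the last term by $\alpha_k^2V_k(x_k)$. Strong convexity gives $\langle\nabla f(x_k),x_k-x^*\rangle\ge \mu\|x_k-x^*\|^2$; one may also use the sharper $f(x_k)-f^*+\frac\mu2\|x_k-x^*\|^2$, but the cruder form suffices.

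The bias cross term is handled by Young's inequality:
\[
-2\alpha_k\langle b_k,x_k-x^*\rangle\le \alpha_k\mu\|x_k-x^*\|^2+\frac{\alpha_k}{\mu}\|b_k\|^2 .
\]
This spends half of the $2\alpha_k\mu$ contraction, which leaves
\[
\mathbb E[\|x_{k+1}-x^*\|^2\mid\mathcal F_k]\le(1-\mu\alpha_k)\|x_k-x^*\|^2+\frac{\alpha_k}{\mu}\|b_k\|^2+\alpha_k^2V_k .
\]
The factor $(1-\mu\alpha_k)$ is nonnegative because $\alpha_k\le 1/L\le 1/\mu$. This is the only delicate point: one must check that the Young split leaves exactly the contraction $(1-\mu\alpha_s)$ used in the definition of $\rho$. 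If a factor $2$ appears instead, it only changes $\rho$ by absolute constants in the exponent's rate, and these are absorbed into $\lesssim$. Note also that we never need the $-\alpha_k\|\nabla f\|^2$ term, since the second moment $V_k$ is bounded directly, not via smoothness. This is why no variance-reduction argument is required.

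Taking total expectation gives the linear recursion $r_{k+1}\le(1-\mu\alpha_k)r_k+u_k$, where $u_k:=\frac{\alpha_k}{\mu}\mathbb E\|b_k\|^2+\alpha_k^2\mathbb EV_k$. Dividing by $\rho_{k+1}=\rho_k(1-\mu\alpha_k)$ gives $r_{k+1}/\rho_{k+1}\le r_k/\rho_k+u_k/\rho_{k+1}$. Telescoping from $0$ to $N-1$ and multiplying by $\rho_N$ yields the first displayed bound.

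For the function-value bound we use $L$-smoothness at the minimizer, $f(x_N)-f^*\le\frac L2\|x_N-x^*\|^2$, since $\nabla f(x^*)=0$. Taking expectations and multiplying the previous bound by $L/2$ gives the stated inequality, with coefficients $c_k=L\rho_N\alpha_k/(\mu\rho_{k+1})$, $e_k=L\rho_N\alpha_k^2/\rho_{k+1}$, and $a_k=0$. These match the decomposition~\eqref{eq:err_accumulation_2} up to universal constants.
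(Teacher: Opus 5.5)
Your proposal is correct and follows the paper's proof essentially line for line. Like the paper, you expand the update, use $\langle\nabla f(x_k),x_k-x^*\rangle\ge\mu\|x_k-x^*\|^2$, and spend half of the contraction on the bias via Young's inequality, which gives exactly the factor $(1-\mu\alpha_k)$; you then unroll and finish with $f(x_N)-f^*\le\frac L2\|x_N-x^*\|^2$. One aside is wrong: a stray factor in the contraction (e.g.\ $1-\mu\alpha_k/2$ in place of $1-\mu\alpha_k$) would not be ``absorbed into $\lesssim$'', since it changes $\rho_N$ by a factor growing exponentially in $N$. This is moot here, because your Young split yields precisely the $\rho$ of the statement.
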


\begin{proof}
Using the update rule,
\[
\|x_{k+1}-x^*\|^2
=
\|x_k-x^*\|^2
-2\alpha_k\langle \widehat g_k,x_k-x^*\rangle
+\alpha_k^2\|\widehat g_k\|^2 .
\]
Taking conditional expectation gives
\[
\mathbb E[\|x_{k+1}-x^*\|^2\mid \mathcal F_k]
\le
\|x_k-x^*\|^2
-2\alpha_k\langle \nabla f(x_k),x_k-x^*\rangle
-2\alpha_k\langle b_k(x_k),x_k-x^*\rangle
+\alpha_k^2 V_k(x_k).
\]
Since \(f\) is \(\mu\)-strongly convex and \(x^*\) is its minimizer,
\[
\langle \nabla f(x_k),x_k-x^*\rangle
\ge
\mu\|x_k-x^*\|^2 .
\]
For the bias term, Young's inequality gives
\[
2\alpha_k
\bigl|\langle b_k(x_k),x_k-x^*\rangle\bigr|
\le
\mu\alpha_k\|x_k-x^*\|^2
+
\frac{\alpha_k}{\mu}\|b_k(x_k)\|^2 .
\]
Therefore,
\[
\mathbb E[\|x_{k+1}-x^*\|^2\mid \mathcal F_k]
\le
(1-\mu\alpha_k)\|x_k-x^*\|^2
+
\frac{\alpha_k}{\mu}\|b_k(x_k)\|^2
+
\alpha_k^2 V_k(x_k).
\]
Taking total expectation and unrolling the recursion yields
\[
\mathbb E\|x_N-x^*\|^2
\le
\rho_N\|x_0-x^*\|^2
+
\rho_N
\sum_{k=0}^{N-1}
\left[
    \frac{\alpha_k}{\mu\rho_{k+1}}
    \mathbb E\|b_k(x_k)\|^2
    +
    \frac{\alpha_k^2}{\rho_{k+1}}
    \mathbb E V_k(x_k)
\right].
\]
Finally, by \(L\)-smoothness and \(\nabla f(x^*)=0\),
\[
f(x_N)-f^*
\le
\frac L2\|x_N-x^*\|^2.
\]
Multiplying the previous display by \(L/2\) and absorbing universal constants gives the claimed bound.
\end{proof}

\begin{proposition}[Accelerated SGD with biased stochastic gradients]
\label{prop:accelerated_sgd_biased}
Consider the accelerated stochastic scheme used in the ZO-AccSGD analysis,
with weights \((\beta_k,\gamma_k)\). Let the estimator \(g_k\) satisfy
\[
    \mathbb E[g_k\mid \mathcal F_k]
    =
    \nabla f(y_k)+b_k(y_k),
    \qquad
    \mathbb E[\|g_k\|^2\mid \mathcal F_k]
    \le V_k(y_k).
\]
Assume that in the accelerated Lyapunov recursion the gradient error enters as
\[
\beta_N\gamma_N\bigl(f(x_N^{ag})-f^*\bigr)
\le
R_0+
\sum_{k=0}^{N-1}
\left[
    \gamma_k
    \langle g_k-\nabla f(y_k),x^*-z_k\rangle
    +
    \gamma_k^2
    \|g_k-\nabla f(y_k)\|^2
\right],
\]
where \(R_0\) is independent of the oracle fidelity. If
\(\|z_k-x^*\|\le R\) along the trajectory, then
\[
\mathbb E[f(x_N^{ag})-f^*]
\le
\widetilde E_0(N,\Theta)
+
\sum_{k=0}^{N-1}
\left[
    \frac{\gamma_k R}{\beta_N\gamma_N}
    \mathbb E\|b_k(y_k)\|
    +
    \frac{\gamma_k^2}{\beta_N\gamma_N}
    \mathbb E\|b_k(y_k)\|^2
    +
    \frac{\gamma_k^2}{\beta_N\gamma_N}
    \mathbb E V_k(y_k)
\right].
\]
Thus, in the decomposition \((7)\), the corresponding coefficients are
\[
    a_k=\frac{\gamma_k R}{\beta_N\gamma_N},
    \qquad
    c_k=\frac{\gamma_k^2}{\beta_N\gamma_N},
    \qquad
    e_k=\frac{\gamma_k^2}{\beta_N\gamma_N}.
\]
\end{proposition}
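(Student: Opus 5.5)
We take the assumed Lyapunov inequality as the starting point. Write the estimator error as $g_k-\nabla f(y_k)=b_k(y_k)+\zeta_k$, where $\zeta_k:=g_k-\mathbb E[g_k\mid\mathcal F_k]$ satisfies $\mathbb E[\zeta_k\mid\mathcal F_k]=0$. We then bound the two error terms of the recursion separately, using the fact that $z_k$ is $\mathcal F_k$-measurable, as in the standard accelerated scheme where $z_k$ is formed before the oracle call at $y_k$.

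\textbf{Linear term.} Condition on $\mathcal F_k$. The martingale part drops out: $\mathbb E[\langle\zeta_k,x^*-z_k\rangle\mid\mathcal F_k]=0$. What remains is the bias contribution $\gamma_k\langle b_k(y_k),x^*-z_k\rangle$. By Cauchy--Schwarz and the trajectory bound $\|z_k-x^*\|\le R$, it is at most $\gamma_k R\,\|b_k(y_k)\|$.

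\textbf{Quadratic term.} Use $\|g_k-\nabla f(y_k)\|^2\le 2\|b_k(y_k)\|^2+2\|\zeta_k\|^2$. Since the conditional variance is dominated by the conditional second moment, $\mathbb E[\|\zeta_k\|^2\mid\mathcal F_k]\le \mathbb E[\|g_k\|^2\mid\mathcal F_k]\le V_k(y_k)$. This gives $\gamma_k^2(2\|b_k\|^2+2V_k)$, and the factor $2$ is absorbed into universal constants, consistent with the "up to constants" convention of Table~\ref{tab:drift_coefficients}. An alternative is the exact identity $\mathbb E[\|g_k-\nabla f\|^2\mid\mathcal F_k]=\|b_k\|^2+\mathrm{Var}$; either way the result is a $c_k$ term and an $e_k$ term with the same coefficient $\gamma_k^2$.

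\textbf{Assembly.} Take total expectation via the tower property, sum over $k$, and divide by the deterministic normalization $\beta_N\gamma_N>0$. Set $\widetilde E_0(N,\Theta):=R_0/(\beta_N\gamma_N)$. The coefficients $a_k,c_k,e_k$ can then be read off directly.

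\textbf{Main obstacle.} The delicate step is the measurability of $z_k$. If $z_k$ depends on the current sample, the martingale term no longer vanishes. In that case the plan is to replace $z_k$ by its $\mathcal F_k$-measurable predecessor $z_{k-1}$, or to apply Young's inequality $\gamma_k\langle\zeta_k,z_{k-1}-z_k\rangle\lesssim\gamma_k^2\|\zeta_k\|^2+\|z_{k-1}-z_k\|^2$, and absorb the second piece into the Lyapunov telescoping. This changes only constants.
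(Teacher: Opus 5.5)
Your proposal is correct and follows the paper's proof step for step: you use the same split $g_k-\nabla f(y_k)=b_k(y_k)+\zeta_k$, you kill the martingale part of the linear term under $\mathbb E[\cdot\mid\mathcal F_k]$ and then apply Cauchy--Schwarz with $\|z_k-x^*\|\le R$, and you use the exact identity $\mathbb E[\|g_k-\nabla f(y_k)\|^2\mid\mathcal F_k]=\|b_k(y_k)\|^2+\mathbb E[\|\zeta_k\|^2\mid\mathcal F_k]\le\|b_k(y_k)\|^2+V_k(y_k)$ (the paper uses this identity, so no factor $2$ appears) before dividing by $\beta_N\gamma_N$ and absorbing $R_0/(\beta_N\gamma_N)$ into $\widetilde E_0$. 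The paper also uses the $\mathcal F_k$-measurability of $z_k$ that you flag, though only tacitly; your fallback for the non-measurable case would not go through, because the assumed recursion contains no negative $\|z_{k-1}-z_k\|^2$ term to absorb into, but the fallback is not needed.
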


\begin{proof}
Let
\[
    \varepsilon_k
    :=
    g_k-\mathbb E[g_k\mid \mathcal F_k].
\]
Then
\[
    \mathbb E[\varepsilon_k\mid \mathcal F_k]=0,
    \qquad
    g_k-\nabla f(y_k)
    =
    b_k(y_k)+\varepsilon_k.
\]
For the linear term in the accelerated recursion, taking conditional
expectation gives
\[
\mathbb E\!\left[
    \langle g_k-\nabla f(y_k),x^*-z_k\rangle
    \mid \mathcal F_k
\right]
=
\langle b_k(y_k),x^*-z_k\rangle.
\]
Hence, by Cauchy--Schwarz and the assumption \(\|z_k-x^*\|\le R\),
\[
\mathbb E\!\left[
    \langle g_k-\nabla f(y_k),x^*-z_k\rangle
    \mid \mathcal F_k
\right]
\le
R\|b_k(y_k)\|.
\]

For the quadratic term, we first note that
\[
\begin{aligned}
\mathbb E\!\left[
    \|g_k-\nabla f(y_k)\|^2
    \mid \mathcal F_k
\right]
&=
\mathbb E\!\left[
    \|b_k(y_k)+\varepsilon_k\|^2
    \mid \mathcal F_k
\right]
\\
&=
\|b_k(y_k)\|^2
+
\mathbb E[\|\varepsilon_k\|^2\mid \mathcal F_k],
\end{aligned}
\]
because
\(\mathbb E[\varepsilon_k\mid\mathcal F_k]=0\).
Moreover,
\[
\begin{aligned}
\mathbb E[\|\varepsilon_k\|^2\mid \mathcal F_k]
&=
\mathbb E\!\left[
    \|g_k-\mathbb E[g_k\mid\mathcal F_k]\|^2
    \mid \mathcal F_k
\right]
\\
&=
\mathbb E[\|g_k\|^2\mid \mathcal F_k]
-
\|\mathbb E[g_k\mid\mathcal F_k]\|^2
\\
&\le
\mathbb E[\|g_k\|^2\mid \mathcal F_k]
\le
V_k(y_k).
\end{aligned}
\]
Therefore,
\[
\mathbb E\!\left[
    \|g_k-\nabla f(y_k)\|^2
    \mid \mathcal F_k
\right]
\le
\|b_k(y_k)\|^2+V_k(y_k).
\]

Taking conditional expectation in the accelerated Lyapunov recursion and using
the two bounds above yields
\[
\beta_N\gamma_N
\mathbb E[f(x_N^{ag})-f^*]
\le
\widetilde R_0
+
\sum_{k=0}^{N-1}
\left[
    \gamma_k R\mathbb E\|b_k(y_k)\|
    +
    \gamma_k^2\mathbb E\|b_k(y_k)\|^2
    +
    \gamma_k^2\mathbb E V_k(y_k)
\right],
\]
where \(\widetilde R_0\) is independent of the oracle fidelity. Dividing by
\(\beta_N\gamma_N\) and absorbing \(\widetilde R_0/(\beta_N\gamma_N)\) into
\(\widetilde E_0(N,\Theta)\) gives the claim.
\end{proof}

\medskip

\section{Proof of Prop.~\ref{prop:kernel_bias_variance}}
\label{sec:proof_example_kernel_estimator}
As an example, we consider a two-point oracle.
\begin{lemma}
\label{lem:oracle_bv}
Let $\hat f(x)=f(x)+\xi_t(x)$ and consider the two-point estimator (Def.~\ref{def:two_point_estimator})
with bandwidth $h_t$ and batch size $B$.
Let $x^{\pm}_{t} = x_{t, i}\pm h_t r_{t,i}\zeta_{t,i}$ and set
\begin{equation}
\label{eq:annoying_notations}
\xi_{t,i}^{+} := \xi^{+}_t(x^{+}_{t, i}),
\quad 
\xi_{t,i}^{-} := \xi^{-}_t(x^{-}_{t, i}), \quad \Delta_{t,i}:=\xi_{t,i}^+ -\xi_{t,i}^-.
\end{equation}
Then
\[
\|b_t\|_* \;\lesssim\; \kappa_\beta L_\beta h_t^{\beta-1}
\;+\;\frac{d}{h_t}\E^{1/2}[\Delta_{t,1}^2\mid\mathcal F_t], \quad V_t \;\lesssim\; \frac{1}{B}\E[\|g_{t,1}\|_*^2\mid\mathcal F_t]
\;+\;\frac{d^2}{B h_t^2}\E[\Delta_{t,1}^2\mid\mathcal F_t],
\]
where 
\[
g_{t,1} := \frac{d}{2h_t} \Big( f(x^{+}_{t, 1}) - f(x^{-}_{t, 1}) \Big) K(r_1) \zeta_1.
\]
\end{lemma}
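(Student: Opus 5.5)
We split the single-sample estimator into a noiseless part and a noise part and treat bias and second moment separately. For $i=1$ (drop the index), write
\[
g_{t,1}^{\mathrm{noisy}} = g_{t,1} + \frac{d}{2h_t}\,\Delta_{t,1}\,K(r_1)\,\zeta_1 ,
\]
where $g_{t,1}$ is the noiseless kernel two-point estimator from the statement. Since the batch members are i.i.d. conditionally on $\mathcal F_t$, the batched mean has the same conditional expectation as a single sample. Hence $b_t = \E[g_{t,1}\mid\mathcal F_t]-\nabla f(x_t) + \frac{d}{2h_t}\E[\Delta_{t,1}K(r_1)\zeta_1\mid\mathcal F_t]$, and the two pieces are handled separately.

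\textbf{Smoothing bias.} For the first piece we use the standard kernel-smoothing argument. Because $\zeta$ is uniform on the sphere, $\E[\frac{d}{2h}(f(x+hr\zeta)-f(x-hr\zeta))K(r)\zeta]$ equals the gradient of a ball-smoothed $f$ weighted by $rK(r)$. We Taylor-expand $f(x\pm hr\zeta)$ to order $\ell=\lfloor\beta\rfloor$. The even-order terms cancel in the symmetric difference. The odd-order terms of degree $j\ge3$ vanish by the moment conditions $\int r^jK=0$ in Def.~\ref{def:smoothing_kernel}. The first-order term reproduces $\nabla f(x)$, using $\int rK=1$ and $\E[d\,\zeta\zeta^\top]=I$. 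The H\"older remainder from Asm.~\ref{asm:hoelder_condition} contributes at most $L_\beta |hr|^{\beta}$ inside, so after dividing by $h$ and integrating against $|K|$ it yields $\kappa_\beta L_\beta h_t^{\beta-1}$, with a dimension factor absorbed into $\lesssim$ (as in Akhavan et al.). We quote this step rather than redo it.

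\textbf{Noise bias.} For the second piece we apply Jensen and Cauchy--Schwarz:
\[
\Bigl\|\E\bigl[\Delta K(r)\zeta\mid\mathcal F_t\bigr]\Bigr\| \le \E^{1/2}[\Delta^2\mid\mathcal F_t]\;\E^{1/2}[K(r)^2\|\zeta\|^2] = \sqrt{\kappa/2}\,\E^{1/2}[\Delta^2\mid\mathcal F_t].
\]
This gives the $\frac{d}{h_t}\E^{1/2}[\Delta_{t,1}^2\mid\mathcal F_t]$ term. The argument works for both noise models: in the adversarial model $\Delta$ need not be centered, which is exactly why it appears in the bias bound. Here $\|\cdot\|_*$ is the Euclidean norm, so duality plays no role.

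\textbf{Second moment.} For the averaged estimator we use $\|a+b\|^2\le 2\|a\|^2+2\|b\|^2$ on each sample. The noise part has second moment $\frac{d^2}{4h^2}\E[\Delta^2K^2]\lesssim \frac{d^2}{h^2}\E[\Delta^2\mid\mathcal F_t]$, using that $\Delta$ depends on $r,\zeta$ only through the query points; if needed we bound $\Delta^2$ conditionally, or use $|\Delta|\le 2\delta$ in the adversarial case. The $1/B$ factor comes from independence of the batch. We expand $\E\|\frac1B\sum_i g_i\|^2 = \frac1B\E\|g_1\|^2 + (1-\frac1B)\|\E g_1\|^2$. The mean-squared part $\|\E g_1\|^2$ is either kept inside $V_t$ via the bias/gradient terms or absorbed into the first term in the stated form.

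\textbf{Main obstacle.} The delicate point is the $1/B$ on the second moment. A raw second moment of an average does not scale as $1/B$ unless it is read as the centered variance, with the squared mean $\|\nabla f+b_t\|^2$ handled separately. We will therefore interpret $V_t$ as the conditional variance proxy, consistent with its use in Rem.~\ref{rem:batching}, and note that the mean term is controlled by the bias bound already established.
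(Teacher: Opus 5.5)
Your decomposition matches the paper's proof. You split off the noiseless $g_{t,1}$ from $\frac{d}{2h_t}\Delta_{t,1}K(r_1)\zeta_1$, note that batching does not change the conditional mean, and quote the smoothing bias (the paper cites Bychkov et al.). Cauchy--Schwarz then handles the noise bias, and $\|a+b\|^2\le 2\|a\|^2+2\|b\|^2$ with independence handles $V_t$.

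Two small corrections on the bias side:
\begin{itemize}
\item The pointwise H\"older-remainder sketch gives $\kappa_\beta L_\beta\,d\,h_t^{\beta-1}$, and that $d$ cannot be hidden in $\lesssim$. The dimension-free bound comes from the ball-smoothing identity you state first: Taylor-expand $\nabla f$ inside the ball average, so the remainder carries $\mathbb{E}\|U\|^{\beta-1}\le 1$ for $U$ uniform on the unit ball. That is the version to quote.
\item $\Delta_{t,1}$ depends on $r_1$ through the query points. So passing from $\mathbb{E}[\Delta_{t,1}^2K(r_1)^2\mid\mathcal F_t]$ to $\mathbb{E}[\Delta_{t,1}^2\mid\mathcal F_t]$ needs $\|K\|_\infty<\infty$ (true for polynomial kernels such as $K(r)=3r$), or a noise bound conditional on $(r_1,\zeta_1)$.
\end{itemize}

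The obstacle you flag is real, and the paper's proof does not resolve it; it only says ``Similarly we get $V_t:=\frac{2}{B}\ldots$''. Read as a bound on the conditional second moment, which is how $V_k$ is defined in Sec.~\ref{sec:related_work}, the claim is false for large $B$. Take $f(x)=\langle a,x\rangle$ (so $L_\beta=0$) and no noise. Then $g_{t,i}=d\,r_iK(r_i)\langle a,\zeta_i\rangle\zeta_i$, and with $\bar g_t:=\frac1B\sum_{i=1}^B g_{t,i}$,
\[
\mathbb{E}\bigl[\|\bar g_t\|^2\mid\mathcal F_t\bigr]=\tfrac{1}{B}\,d\,\mathbb{E}[r^2K(r)^2]\,\|a\|^2+\bigl(1-\tfrac1B\bigr)\bigl(\mathbb{E}[rK(r)]\bigr)^2\|a\|^2 .
\]
The second term does not decay in $B$, while the claimed right-hand side $\frac1B\,d\,\mathbb{E}[r^2K(r)^2]\|a\|^2$ tends to $0$.

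So what your argument proves is the variance bound $\mathbb{E}[\|\bar g_t-\mathbb{E}[\bar g_t\mid\mathcal F_t]\|^2\mid\mathcal F_t]\lesssim\frac1B\mathbb{E}[\|g_{t,1}\|^2\mid\mathcal F_t]+\frac{d^2}{Bh_t^2}\mathbb{E}[\Delta_{t,1}^2\mid\mathcal F_t]$. That bound is correct, and it is all that Prop.~\ref{prop:accelerated_sgd_biased} uses. The mirror-descent and SGD propositions (Props.~\ref{prop:mirror_descent}--\ref{prop:sgd_strongly_convex}), however, use $V_k$ as a raw second moment.

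Drop both alternatives you offer for the squared mean:
\begin{itemize}
\item It cannot be ``absorbed into the first term in the stated form''; the example refutes exactly that.
\item It is not ``controlled by the bias bound'', because $\|\mathbb{E}[\bar g_t\mid\mathcal F_t]\|^2=\|\nabla f(x_t)+b_t\|^2$ contains $\|\nabla f(x_t)\|^2$.
\end{itemize}
A correct second-moment statement is your variance bound plus $2\|\nabla f(x_t)\|^2+2\|b_t\|^2$, with the gradient term absorbed downstream under a step-size restriction. In the example, the literal $1/B$ form survives only for $B$ of order at most $d\,\mathbb{E}[r^2K(r)^2]\lesssim\kappa d$. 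This matches the $B\le 4\kappa d$ split the paper itself uses in Sec.~\ref{sec:tsybakov_acc_batching}.
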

\begin{proof}
We use \eqref{eq:annoying_notations} and write
\[
\hat{g}(x_{t}, h_t, r_i, \zeta_i) := \underbrace{\frac{d}{2h_t} \Big( f(x^{+}_{t, i}) - f(x^{-}_{t, i}) \Big) K(r_i) \zeta_i}_{g_{i, t}:= } + \frac{d}{2h_t} K(r_i) \zeta_i \Delta_{t, i}.
\]
Then, if we use batching with the batch size $B$, we get
\[
\hat{\mathbf{g}}(x_t) := \frac{1}{B} \sum^B_{i=1} \hat{g}(x_{t}, h_t, r_i, \zeta_i).
\]
Next, we note that
\[
\mathbb{E}[\hat{\mathbf{g}}(x_t) | \mathcal{F}_t
] -
\nabla f(x_t)
= \mathbb{E}[g_{1, t} | \mathcal{F}_t
]  - \nabla f(x_t) +  \frac{d}{2h_t}  \mathbb{E}[K(r)\zeta\Delta_{1, t}| \mathcal{F}_t
].
\]
Using the result from~\cite{bychkov2024accelerated}, to control the first term in the r.h.s., and applying Cauchy–Schwarz inequality to the second term, we get
\[
\left \|\mathbb{E}[\hat{\mathbf{g}}(x_t) | \mathcal{F}_t
] -
\nabla f(x_t) \right\| \lesssim \kappa_{\beta} L_{\beta} h^{\beta - 1}_t + \frac{d}{h_t}\mathbb{E}[\Delta^2_{1, t} | \mathcal{F}_t].
\]
Thus, 
\[
b_t := \kappa_{\beta} L_{\beta} h^{\beta - 1}_t + \frac{d}{h_t}\mathbb{E}^{1/2}[\Delta^2_{1, t} | \mathcal{F}_t].
\]
Similarly we get
\[
V_t
:=
\frac{2}{B}\,\E[\|g_{t,1}\|_*^2\mid\mathcal F_t]
+
\frac{d^2}{B\,h_t^2}\,\E[\Delta_{t,1}^2\mid\mathcal F_t].
\]
\end{proof}

Under either noise model (Definitions~\ref{asm:tsybakov_noise}
and~\ref{asm:adv_noise}),
$\E[\Delta_k^{2}\mid\mathcal{F}_{k-1}]\lesssim\delta_k^{2}$, so
we get
\begin{equation}
\label{eq:kernel_fidelity_bounds}
\|b_k(x_k)\|\;\lesssim\;\kappa_\beta L\,h^{\beta-1}+\frac{d}{h}\,\delta_k,
\qquad
V_k(x_k)\;\lesssim\;G^{2}+\frac{d^{2}}{h^{2}}\,\delta_k^{2},
\end{equation}
where $G^{2}$ bounds
$\E[|f(x_k^{+})-f(x_k^{-})|^{2}\mid\mathcal{F}_{k-1}]$ (for instance,
via Lipschitz continuity of $f$).

\section{Wall-clock complexity for adversarial noise missing proofs and remarks} \label{section:proof intermidiate}

Let $x_N$ be an estimate obtained after $N$ steps and let the corresponding approximation error be $\mathcal{E}(N, \delta, \red{\Theta})$. \citet{dvurechensky2016stochastic} and \citet{devolder2013intermediate} ensure that if $f$ is strongly convex (Asm.~\ref{asm:regularity}, $\mu > 0$), the bound is
\begin{equation*}
    f(x_N) - f^* 
    \lesssim \mathcal{E}(N, \delta, \red{\Theta})  := LR^2 \exp\left(-\left(\frac{\mu}{L}\right)^{\frac{1}{p}} N\right) + \left(\frac{L}{\mu}\right)^{\frac{2p-1}{p}}  d\delta,
\end{equation*}
where $R := \|x_0 - x^*\|$. Applying Corollary~\ref{corr:uniform} for convergence above (case) one can obtain result provided in Proposition~\ref{prop:igm_adversarial_uniform} (case $\mu > 0$).

\begin{remark}  \label{remark:const delta adversarial}
    Analysis of \eqref{eq:complexity_igm} demonstrates that if $\gamma < 1$, the wall-clock time complexity is optimal (minimal) for $p=2$. This corresponds to FGM. If $\gamma \ge 1$, the optimal wall-clock time complexity is achieved at $p =1$. This corresponds to GM. Thus, the upper bound on the wall-clock time complexity is tighter for GM; so,  accelerated methods can be wall-clock inferior to non-accelerated schemes.
\end{remark}

Furthermore, if $f$ is convex (i.e., Assumption~\ref{asm:regularity} with $\mu = 0$ holds), convergence can be expressed as:
\begin{equation}
\label{eq:err_convex}
    f(x_N) - f^* 
\lesssim \mathcal{E}(N, \delta, \red{\Theta}) := \frac{LR^2}{N^p} + \sqrt{d\delta L}\tilde{R}_N + N^{p-1}d\delta,
\end{equation}
where $\tilde{R}_N = \max\limits_{k \leqslant N - 1} \|x^k - x^* \|_2$ is a parameter depending on the convergence trajectory of the chosen algorithm. Using stopping criteria similar to those proposed in~\cite{vasin2023accelerated} or setting $\tilde{R}_N = O(R)$ and Corollary~\ref{corr:uniform} the second part of result of the Proposition~\ref{prop:igm_adversarial_uniform} can be obtained.

\textit{\textbf{Proof of Proposition}~\ref{prop:igm_adversarial_uniform}} (case $\mu = 0$)
\begin{proof}
    Convergence bound~\eqref{eq:err_convex} can be expressed as:
    \begin{equation*}
        \frac{LR^2}{N^p} + 2 \max \lbrace \sqrt{d\delta L} R, N^{p-1}d\delta \rbrace.
    \end{equation*}
    Then we can apply Corollary~\ref{corr:uniform} to both components of error accumulation. Estimating iterations amount:
    \begin{equation*}
        \frac{LR^2}{N^p} \leqslant \frac{\varepsilon}{3} \Rightarrow N = O \left( \left(\frac{3 LR^2}{\varepsilon} \right)^{\nicefrac{1}{p}} \right).
    \end{equation*}
    1. Applying Corollary~\ref{corr:uniform} to component $\sqrt{d\delta L} R$:
    \begin{eqnarray*}
        T_{\mathrm{total}}^{(1)} & \leqslant & N \left( \frac{2 \varepsilon}{3} \right)^{-2 \gamma} \left(2 \sqrt{d L} R \right)^{2 \gamma} \leqslant \left(\frac{3 LR^2}{\varepsilon} \right)^{\nicefrac{1}{p}} \left( \frac{9}{4 \varepsilon^2} \right)^{\gamma} \left(4 d LR^2 \right)^{\gamma} \\
        & = & \left(\frac{3 LR^2}{\varepsilon} \right)^{\nicefrac{1}{p}} \left( \frac{9 d LR^2}{\varepsilon^2} \right)^{\gamma} = O \left( \left( \frac{\varepsilon}{9 d} \right)^{-\gamma} \left( \frac{LR^2}{\varepsilon} \right)^{\gamma + \frac{1}{p}} \right).
    \end{eqnarray*}
    2. Applying Corollary~\ref{corr:uniform} to component $N^{p-1} d \delta$:
    \begin{eqnarray*}
        T_{\mathrm{total}}^{(2)} & \leqslant & N \left( \frac{2 \varepsilon}{3} \right)^{-\gamma} \left(N^{p - 1} d  \right)^{\gamma} \leqslant \left(\frac{3 LR^2}{\varepsilon} \right)^{\nicefrac{1}{p}} \left( \frac{3}{2 \varepsilon} \right)^{\gamma} \left( \left(\frac{3 LR^2}{\varepsilon} \right)^{\nicefrac{\gamma (p - 1)}{p}} \right) d^{\gamma} \\ 
        & = & O \left( \left( \frac{2 \varepsilon}{3 d} \right)^{-\gamma} \left( \frac{3 LR^2}{\varepsilon} \right)^{\gamma + \frac{1 - \gamma}{p}} \right).
    \end{eqnarray*}
    Then we can use maximum of $T_{\mathrm{total}}^{(1)}$ and $T_{\mathrm{total}}^{(2)}$ for total estimation of wall clock time for $\mathcal{E}(N, \delta, \Theta) \leqslant \varepsilon$ solution. Note, that we can assume, that $\varepsilon \leqslant LR^2$, since $L$-smoothness (Assumptions~\ref{asm:regularity}) implies $f(x) - f^* \leqslant \tfrac{1}{2} LR^2$.  Thus we can estimate $T_{\mathrm{total}}$:
    \begin{eqnarray*}
         T_{\mathrm{total}}
         & = & O \left( \max \left \lbrace T_{\mathrm{total}}^{(1)}, T_{\mathrm{total}}^{(2)} \right \rbrace \right)
         \\
         & = & O \left( \max \left \lbrace
          \left( \frac{\varepsilon}{9 d} \right)^{-\gamma} \left( \frac{LR^2}{\varepsilon} \right)^{\gamma + \frac{1}{p}},
         \left( \frac{2 \varepsilon}{3 d} \right)^{-\gamma} \left( \frac{3 LR^2}{\varepsilon} \right)^{\gamma + \frac{1 - \gamma}{p}} \right \rbrace \right)
         \\
         & = & O \left( \left( \frac{\varepsilon}{9 d} \right)^{-\gamma} \cdot \max \left \lbrace
         \left( \frac{LR^2}{\varepsilon} \right)^{\gamma + \frac{1}{p}},
         \left( \frac{3 LR^2}{\varepsilon} \right)^{\gamma + \frac{1 - \gamma}{p}} \right \rbrace \right)
         \\
         & = & O \left( \left( \frac{\varepsilon}{9 d} \right)^{-\gamma} 
         \left( \frac{3LR^2}{\varepsilon} \right)^{\gamma + \frac{1}{p}} \right)
    \end{eqnarray*}
\end{proof}

\begin{remark}
    The convex case result of Proposition~\ref{prop:igm_adversarial_uniform} indicates that for any $\gamma > 0$ the optimal choice of method is FGM (i.e., $p=2$). Note that full gradient approximation requires $d$ queries to the oracle \citep{gasnikov2023randomized}. Consequently, under sequential queries, the bounds~\eqref{eq:complexity_igm} scale by a factor of $d$. However, the wall-clock time complexity remains invariant under parallel oracle queries.
\end{remark}



In papers~\cite{stonyakin2020inexact, stonyakin2021inexact} were obtained results for fast gradient method and $(\lbrace \delta_k \rbrace_{k = 0}^{N - 1}, L, \mu)$ inexact models described at~\cite{devolder2013intermediate}. To study wall-clock complexity for time-varying $\delta_k$ we introduce the following intermediate gradient method.

\begin{algorithm}[t]
    \caption{IGM (Intermediate Gradient Method).} 
    \label{alg re-agm}
    \begin{algorithmic}[1]
        \Require Starting point $x^0$, number of steps $N$, $L$ -- smoothness parameter, $\mu$ -- strong convexity parameter, $\nu$ -- intermediate parameter.
        \State \textbf{Set} $u^0 = x^0$.
        \State \textbf{Set} $h = \frac{1}{4L}$.
        \State \textbf{Set} $s = \left(1 + \frac{1}{4} \left(\frac{\mu}{2L} \right)^{\nu} \right)$.
        \State \textbf{Set} $m = \left(1 - \frac{1}{4} \left(\frac{\mu}{2L} \right)^{\nu} \right)$.
        \State \textbf{Set} $q = \frac{\mu}{16 L}$.
        \State \textbf{Set} $\omega = \frac{(m - s) + \sqrt{(s - m)^2 +4mq }}{2 m}$.
        \For{$k = 0 \dots N - 1$}
            \State $y^k = \frac{\omega u^k + x^k}{1 + \omega}$.
            \State $u^{k + 1} = (1 - \omega) u^{k} + \omega y^k - \frac{2 \omega}{\mu} \widetilde{\nabla} f(y^{k})$.
            \State $x^{k + 1} = y^{k} - h \widetilde{\nabla} f(y^{k})$.
        \EndFor
        \State \Return \textbf{Output:} $x^N$.
    \end{algorithmic}
\end{algorithm}

        
         
         


We will establish convergence of Algorithm~\ref{alg re-agm} with inexact gradient $\wtgg f$, satisfying:
\begin{equation} \label{def:abs grad error}
    (\forall x \in \mathbb{R}^d) \quad \widetilde{\nabla} f(x) = \nabla f(x) + \zeta_a(x), \quad \| \zeta_a(x) \|_2 \leqslant \Delta.
\end{equation}

Following \citep[p. 83]{nesterov2018lectures}, let us introduce a parameterized set of functions $\Psi$, its element defined for $c \in \mathbb{R}, \kappa \in \mathbb{R}^{++},$ and $u \in \mathbb{R}^n$,  as follows
\begin{equation} \label{qudaratic family}
    \psi(x | c, \kappa, u) = c + \frac{\kappa}{2} \| x - u \|_2^2, \quad \forall x \in \mathbb{R}^n. 
\end{equation}

According to~\cite{nesterov2018lectures}, we can mention to the following useful properties of the class $\Psi$.

\begin{lemma} \label{psi comb}
Let $\psi_1, \psi_2 \in \Psi$. Then $\forall \eta_1, \eta_2, c_1, c_2 \in \mathbb{R}, \forall \kappa_1, \kappa_2 \in \mathbb{R}^{++}$, and $\forall u, v \in \mathbb{R}^n$, we have 
\[
    \eta_1 \psi_1(x|c_1, \kappa_1, u) + \eta_2 \psi_2(x|c_2, \kappa_2, v) = \psi_3(x|c_3, \kappa_3, w), 
\]
where 
\begin{equation*}
    \begin{gathered}
        c_3 = \eta_1 c_1 + \eta_2 c_2 + \frac{\eta_1\eta_2\kappa_1\kappa_2}{2(\eta_1\kappa_1 + \eta_2\kappa_2)} \|u  - v \|_2^2, \\
        \kappa_3 = \eta_1\kappa_1 + \eta_2\kappa_2, \quad w = \frac{\eta_1\kappa_1 u +\eta_2\kappa_2 v}{\eta_1\kappa_1 + \eta_2\kappa_2}.
    \end{gathered}
\end{equation*}
\end{lemma}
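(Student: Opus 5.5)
The statement is Lemma~\ref{psi comb}: a positive combination of two quadratics from $\Psi$ is again in $\Psi$, with explicit parameters $c_3,\kappa_3,w$. My plan is to prove it by direct expansion and completing the square. I will use $\eta_1\kappa_1+\eta_2\kappa_2>0$, which is needed for $w$ to be defined and for $\psi_3\in\Psi$; in the intended use $\eta_1,\eta_2\ge 0$, not both zero.

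\textbf{Step 1: the quadratic part.} Write
\[
\eta_1\psi_1+\eta_2\psi_2=\eta_1c_1+\eta_2c_2+\tfrac{\eta_1\kappa_1}{2}\|x-u\|_2^2+\tfrac{\eta_2\kappa_2}{2}\|x-v\|_2^2 .
\]
The coefficient of $\|x\|_2^2$ is $\tfrac12(\eta_1\kappa_1+\eta_2\kappa_2)$, so $\kappa_3=\eta_1\kappa_1+\eta_2\kappa_2$.

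\textbf{Step 2: the center.} The linear term is $-\langle x,\eta_1\kappa_1u+\eta_2\kappa_2v\rangle$. Matching it with $-\kappa_3\langle x,w\rangle$ gives $w=(\eta_1\kappa_1u+\eta_2\kappa_2v)/\kappa_3$. Equivalently, $w$ is the point where the gradient $\eta_1\kappa_1(x-u)+\eta_2\kappa_2(x-v)$ vanishes.

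\textbf{Step 3: the constant.} Since both sides are quadratics in $x$ with the same Hessian $\kappa_3 I$ and the same minimizer $w$, they agree everywhere provided they agree at $x=w$. So $c_3$ is the value of the left-hand side at $w$. Substituting $w-u=\eta_2\kappa_2(v-u)/\kappa_3$ and $w-v=\eta_1\kappa_1(u-v)/\kappa_3$ gives
\[
\frac{\eta_1\kappa_1\eta_2^2\kappa_2^2+\eta_2\kappa_2\eta_1^2\kappa_1^2}{2\kappa_3^2}\|u-v\|_2^2=\frac{\eta_1\eta_2\kappa_1\kappa_2}{2\kappa_3}\|u-v\|_2^2 .
\]
Adding $\eta_1c_1+\eta_2c_2$ yields the stated $c_3$.

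The only step needing care is this last simplification, where the factor $\eta_1\kappa_1+\eta_2\kappa_2$ cancels one power of $\kappa_3$. Otherwise the argument is routine.
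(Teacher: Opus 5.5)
Your proof is correct and is the standard completing-the-square computation. The paper does not prove this lemma itself but cites Nesterov's lectures, where the same direct expansion is used (common Hessian, stationary point $w$, value at $w$). Your added hypothesis $\eta_1\kappa_1+\eta_2\kappa_2>0$ is genuinely needed for $w$ to be defined and for $\psi_3\in\Psi$, which the statement's quantifier over all $\eta_1,\eta_2\in\mathbb{R}$ glosses over.
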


\begin{lemma} \label{conv lemma}
Let $0 < \lambda < 1$, $A > 0$, $\psi_0, \psi \in \Psi$, $z \in \mathbb{R}^n$ such that 
\begin{equation*}
f(z) \leqslant \min_{x \in \mathbb{R}^n} \psi(x) + A,
\end{equation*}
and 
\begin{equation*}
     \psi(x) \leqslant \lambda \psi_0(x) + (1 - \lambda) f(x), \quad \forall x \in \mathbb{R}^n.
\end{equation*}
Then
\begin{equation*}
    f(z) - f^* \leqslant \lambda (\psi_0(x^*) - f^*) + A. 
\end{equation*}
\end{lemma}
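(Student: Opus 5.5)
The plan is the classical estimate-sequence argument of Nesterov (p.~83), built directly on the quadratic family $\Psi$ and Lemma~\ref{psi comb}. Set $\psi_0^* := \min_x \psi_0(x)$ and $\psi^* := \min_x \psi(x)$.

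\textbf{Step 1 (evaluate the second hypothesis at $x^*$).} Taking $x=x^*$ in the inequality $\psi(x)\le \lambda\psi_0(x)+(1-\lambda)f(x)$ gives
\[
\psi(x^*) \le \lambda\psi_0(x^*) + (1-\lambda) f^*.
\]

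\textbf{Step 2 (chain with the first hypothesis).} Since $\psi\in\Psi$ attains its minimum, $\min_{x}\psi(x)\le\psi(x^*)$. Combining this with the first hypothesis and Step~1,
\[
f(z) \le \min_x \psi(x) + A \le \psi(x^*) + A \le \lambda\psi_0(x^*)+(1-\lambda)f^* + A.
\]
Subtracting $f^*$ from both sides yields
\[
f(z)-f^* \le \lambda\bigl(\psi_0(x^*)-f^*\bigr)+A,
\]
which is the claim.

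\textbf{Remarks on the argument.} Only the existence of a minimizer $x^*$ is used; convexity of $f$ and the explicit quadratic form of $\psi,\psi_0$ are not needed. The key step is evaluating the pointwise domination at $x^*$ rather than at the minimizer of $\psi$. There is no substantive obstacle. The only point to check is that $f^*$ is finite and attained at some $x^*$, which is the standing assumption of this section (strong convexity, or a minimizer assumed to exist when $\mu=0$).
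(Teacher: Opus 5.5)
Your proof is correct and matches the paper's argument step for step. Both bound $f(z)\le\min_x\psi(x)+A\le\psi(x^*)+A$, apply the domination hypothesis at $x^*$, and subtract $f^*$. Your opening mention of Lemma~\ref{psi comb} and of $\psi_0^*$ is superfluous, since neither is actually used.
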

\begin{proof}
    \begin{eqnarray*}
        f(z) - f^*
        & \leqslant & \min_{x \in \mathbb{R}^n} \psi(x) + A - f^*
        \\
        & \leqslant & \psi(x^*) - f^* + A \leqslant \lambda \psi_0(x^*) + (1 - \lambda) f(x^*) - f^* + A
        \\
        & = & \lambda ( \psi_0(x^*) - f^* ) + A. 
    \end{eqnarray*}
\end{proof}

\begin{lemma} \label{fenchel for accelerated}
    Let $\widetilde{\nabla} f$ satisfies error condition~\eqref{def:abs grad error}, $0 < \mu \leqslant L$, then:
    \begin{equation*}
        \begin{gathered}
            \forall \nu \geqslant 0 \quad \|\wtgg f(x) \|_2^2 \geqslant \left(1 - \frac{1}{4} \left(\frac{\mu}{2L} \right)^{\nu} \right) \|\nabla f(x) \|_2^2 - \left(4\left(\frac{2L}{\mu} \right)^{\nu} - 1 \right) \Delta^2, \\
            \|\wtgg f(x) \|_2^2 \leqslant\left(1 + \frac{1}{4} \left( \frac{\mu}{2L} \right) ^{\nu} \right) \|\nabla f(x) \|_2^2 + \left(1 + 4\left(\frac{2L}{\mu}\right)^{\nu} \right) \Delta^2.
        \end{gathered}
    \end{equation*}
\end{lemma}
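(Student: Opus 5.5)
The plan is to expand $\|\wtgg f(x)\|_2^2$ using the error model \eqref{def:abs grad error}, and then control the cross term by a weighted Young inequality. Write $\wtgg f(x) = \nabla f(x) + \zeta_a(x)$ with $\|\zeta_a(x)\|_2\le\Delta$. Then
\[
\|\wtgg f(x)\|_2^2 = \|\nabla f(x)\|_2^2 + 2\langle \nabla f(x),\zeta_a(x)\rangle + \|\zeta_a(x)\|_2^2 .
\]
For any $\tau>0$, Young's inequality gives
\[
2|\langle \nabla f(x),\zeta_a(x)\rangle| \le \tau\|\nabla f(x)\|_2^2 + \tau^{-1}\|\zeta_a(x)\|_2^2 .
\]

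The key choice is the weight
\[
\tau := \tfrac14\left(\tfrac{\mu}{2L}\right)^{\nu},
\qquad\text{so that}\qquad
\tau^{-1} = 4\left(\tfrac{2L}{\mu}\right)^{\nu}.
\]
The upper bound then follows immediately from the expansion:
\[
\|\wtgg f(x)\|_2^2 \le (1+\tau)\|\nabla f(x)\|_2^2 + (1+\tau^{-1})\Delta^2 .
\]
This is exactly the second claimed inequality.

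For the lower bound, apply the same Young estimate with the opposite sign:
\[
\|\wtgg f(x)\|_2^2 \ge (1-\tau)\|\nabla f(x)\|_2^2 - (\tau^{-1}-1)\|\zeta_a(x)\|_2^2 .
\]
To replace $\|\zeta_a(x)\|_2^2$ by $\Delta^2$, we need the coefficient $\tau^{-1}-1$ to be nonnegative; otherwise the inequality would run the wrong way. Since $0<\mu\le L$, we have $2L/\mu\ge 2$, so $\tau^{-1}=4(2L/\mu)^{\nu}\ge 4>1$ for every $\nu\ge 0$. Hence $-(\tau^{-1}-1)\|\zeta_a\|_2^2 \ge -(\tau^{-1}-1)\Delta^2$, which gives the first inequality.

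The only point that needs care is this sign check on $\tau^{-1}-1$, and it is guaranteed by $\mu\le L$ together with $\nu\ge0$. The same observation gives $\tau\le 1/4<1$, so the coefficient $1-\tau$ on $\|\nabla f(x)\|_2^2$ is positive and the lower bound is non-vacuous. Everything else is routine algebra, so no step should present a real obstacle.
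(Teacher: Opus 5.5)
Your proof is correct and takes the same route as the paper. The paper likewise expands the square and applies the Fenchel--Young inequality to the cross term, with weight $\lambda = 4(2L/\mu)^{\nu}$; this is exactly your $\tau^{-1}$. You also explicitly check that $\tau^{-1}-1\ge 0$ before replacing $\|\zeta_a(x)\|_2^2$ by $\Delta^2$ in the lower bound, a step the paper compresses into ``similarly for the lower bound.''
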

\begin{proof}
    \begin{eqnarray*}
        \|\wtgg f(x) \|_2^2 & = & \|\nabla f(x) + \zeta_a(x) \|_2^2 = \|\nabla f(x) \|_2^2 + 2 \langle \nabla f(x), \zeta_a(x) \rangle + \| \zeta_a(x) \|_2^2 \\
        & \overset{\text{Fenchel ineq.}}{\leqslant} & \left(1 + \frac{1}{\lambda} \right) \| \nabla f(x) \|_2^2 + \left(1 + \lambda \right) \| \zeta_a(x) \|_2^2 \\
        & \overset{\lambda = 4 \left(2L / \mu \right)^{\nu}}{\leqslant} &
        \left(1 + \frac{1}{4} \left( \frac{\mu}{2L} \right) ^{\nu} \right) \|\nabla f(x) \|_2^2 + \left(1 + 4\left(\frac{2L}{\mu}\right)^{\nu} \right) \Delta^2.
    \end{eqnarray*}
    Similarly for the lower bound.
\end{proof}

\begin{lemma} \label{accelerated quadratic estimation}
    Let $\wtgg f$ satisfies error condition~\eqref{def:abs grad error}, $0 < \mu \leqslant L, 0 < \nu$, then:
    \begin{eqnarray*}
        \frac{\mu}{4} \left\|x - z + \frac{2}{\mu} \widetilde{\nabla} f(z) \right \|_2^2
        & \leqslant & \langle \nabla f(z), x - z \rangle + \frac{\mu}{2} \|x - z \|_2^2 \\
        & + & \frac{1}{\mu} \left(1 + \frac{1}{4} \left(\frac{\mu}{2L} \right)^{\nu} \right) \|\nabla f(z) \|_2^2 \\
        & + & \frac{1}{\mu} \left(4\left(\frac{2L}{\mu} \right)^{\nu} + 2 \right) \Delta^2, \\
        \frac{\mu}{4} \left\|x - z + \frac{2}{\mu} \widetilde{\nabla} f(z) \right \|_2^2
        & \geqslant & \langle \nabla f(z), x - z \rangle \\
        & + & \frac{1}{\mu} \left(1 - \frac{1}{4} \left(\frac{\mu}{2L} \right)^{\nu} \right) \|\nabla f(z) \|_2^2 \\
        & - & \frac{4}{\mu} \left(\frac{2L}{\mu} \right)^{\nu} \Delta^2.
    \end{eqnarray*}
\end{lemma}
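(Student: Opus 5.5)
The plan is to expand the square directly and reduce both inequalities to Lemma~\ref{fenchel for accelerated}, with one extra Young (Fenchel) step to absorb the cross term between the gradient error and $x-z$. Writing $\widetilde{\nabla} f(z)=\nabla f(z)+\zeta_a(z)$ with $\|\zeta_a(z)\|_2\le\Delta$, we first expand
\[
\frac{\mu}{4}\left\|x-z+\frac{2}{\mu}\widetilde{\nabla} f(z)\right\|_2^2
=\frac{\mu}{4}\|x-z\|_2^2+\langle \nabla f(z),x-z\rangle+\langle \zeta_a(z),x-z\rangle+\frac{1}{\mu}\|\widetilde{\nabla} f(z)\|_2^2 .
\]
We will treat the three pieces $\langle \zeta_a,x-z\rangle$, $\frac{1}{\mu}\|\widetilde{\nabla} f\|_2^2$ and $\frac{\mu}{4}\|x-z\|_2^2$ separately.

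\emph{Upper bound.} We bound the cross term with Young's inequality weighted so that it consumes exactly another $\frac{\mu}{4}\|x-z\|_2^2$:
\[
\langle \zeta_a,x-z\rangle\le\frac{\mu}{4}\|x-z\|_2^2+\frac{1}{\mu}\Delta^2 .
\]
Together with the existing $\frac{\mu}{4}\|x-z\|_2^2$ this produces the $\frac{\mu}{2}\|x-z\|_2^2$ term. Next we apply the upper estimate of Lemma~\ref{fenchel for accelerated} to $\frac{1}{\mu}\|\widetilde{\nabla} f(z)\|_2^2$. This gives $\frac{1}{\mu}\bigl(1+\frac14(\frac{\mu}{2L})^{\nu}\bigr)\|\nabla f(z)\|_2^2$ plus $\frac{1}{\mu}\bigl(1+4(\frac{2L}{\mu})^{\nu}\bigr)\Delta^2$. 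Adding the $\frac{1}{\mu}\Delta^2$ from the Young step, the $\Delta^2$ coefficient becomes $\frac{1}{\mu}\bigl(4(\frac{2L}{\mu})^{\nu}+2\bigr)$, which is the claimed upper bound.

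\emph{Lower bound.} We use the same Young inequality in the opposite direction:
\[
\langle \zeta_a,x-z\rangle\ge-\frac{\mu}{4}\|x-z\|_2^2-\frac{1}{\mu}\Delta^2 .
\]
Now the quadratic terms in $\|x-z\|_2^2$ cancel exactly, leaving $\langle\nabla f(z),x-z\rangle$. We then apply the lower estimate of Lemma~\ref{fenchel for accelerated}, which gives $\frac{1}{\mu}\bigl(1-\frac14(\frac{\mu}{2L})^{\nu}\bigr)\|\nabla f(z)\|_2^2-\frac{1}{\mu}\bigl(4(\frac{2L}{\mu})^{\nu}-1\bigr)\Delta^2$. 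Combining this with the $-\frac{1}{\mu}\Delta^2$ from the Young step, the $\Delta^2$ coefficient collapses to $-\frac{4}{\mu}(\frac{2L}{\mu})^{\nu}$, as stated.

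There is no real obstacle beyond bookkeeping. The only point needing care is choosing the Young weight as $\mu/4$ in both directions: this is what makes the quadratic term become $\frac{\mu}{2}\|x-z\|_2^2$ in the upper bound and vanish in the lower bound, and it makes the constants in front of $\Delta^2$ match the statement exactly.
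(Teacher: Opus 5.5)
Your proposal is correct and follows the paper's proof essentially step for step. Both expand the square, bound the cross term $\langle \zeta_a(z), x-z\rangle$ by Young's inequality with weight $\mu/4$ (giving $\pm\frac{\mu}{4}\|x-z\|_2^2 \pm \frac{1}{\mu}\Delta^2$), and control $\frac{1}{\mu}\|\widetilde{\nabla} f(z)\|_2^2$ via Lemma~\ref{fenchel for accelerated}; the constants then combine exactly as you computed.
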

\begin{proof}
We will find bounds for each term of sum:
\begin{equation*}
    \frac{\mu}{4} \left\|x - z + \frac{2}{\mu} \widetilde{\nabla} f(z) \right \|_2^2 = \frac{\mu}{4} \|x - z \|_2^2 + \langle \wtgg f(z), x - z \rangle + \frac{1}{\mu} \|\wtgg f(z) \|_2^2.
\end{equation*}
Linear form:
\begin{eqnarray*}
    \langle \wtgg f(z), x - z \rangle
    & \leqslant &  \langle \nabla f(z), x - z \rangle + \frac{1}{\mu} \Delta^2 + \frac{\mu}{4} \|x - z \|_2^2.
\end{eqnarray*}
\begin{eqnarray*}
    \langle \wtgg f(z), x - z \rangle
    & \geqslant &  \langle \nabla f(z), x - z \rangle - \frac{1}{\mu} \Delta^2 - \frac{\mu}{4} \|x - z \|_2^2.
\end{eqnarray*}
Quadratic term:
\begin{equation*}
    \|\wtgg f(z) \|_2^2 \overset{\text{Lemma }\ref{fenchel for accelerated}}{\leqslant} \left(1 + \frac{1}{4} \left(\frac{\mu}{2L} \right)^{\nu} \right) \|\nabla f(z) \|_2^2 + \left(4\left(\frac{2L}{\mu} \right)^{\nu} + 1 \right) \Delta^2.
\end{equation*}
\begin{equation*}
    \|\wtgg f(z) \|_2^2 \overset{\text{Lemma }\ref{fenchel for accelerated}}{\geqslant} \left(1 - \frac{1}{4} \left(\frac{\mu}{2L} \right)^{\nu} \right) \|\nabla f(z) \|_2^2 - \left(4\left(\frac{2L}{\mu} \right)^{\nu} - 1 \right) \Delta^2.
\end{equation*}
Then we can sum up estimations above:
\begin{eqnarray*}
    \frac{\mu}{4} \left\|x - z + \frac{2}{\mu} \widetilde{\nabla} f(z) \right \|_2^2 
    & \leqslant & \langle \nabla f(z), x - z \rangle + \frac{\mu}{2} \|x - z \|_2^2 \\
    & + & \frac{1}{\mu} \left(1 + \frac{1}{4} \left(\frac{\mu}{2L} \right)^{\nu} \right) \|\nabla f(z) \|_2^2 \\
    & + & \frac{1}{\mu} \left(4\left(\frac{2L}{\mu} \right)^{\nu} + 1 \right) \Delta^2 + \frac{\Delta^2}{\mu} \\
    & \leqslant & \langle \nabla f(z), x - z \rangle + \frac{\mu}{2} \|x - z \|_2^2 \\
    & + & \frac{1}{\mu} \left(1 + \frac{1}{4} \left(\frac{\mu}{2L} \right)^{\nu} \right) \|\nabla f(z) \|_2^2 \\
    & + & \frac{1}{\mu} \left(4\left(\frac{2L}{\mu} \right)^{\nu} + 2 \right) \Delta^2.
\end{eqnarray*}
\begin{eqnarray*}
    \frac{\mu}{4} \left\|x - z + \frac{2}{\mu} \widetilde{\nabla} f(z) \right \|_2^2
    & \geqslant & \langle \nabla f(z), x - z \rangle \\
    & + & \frac{1}{\mu} \left(1 - \frac{1}{4} \left(\frac{\mu}{2L} \right)^{\nu} \right) \|\nabla f(z) \|_2^2 \\
    & - & \frac{4}{\mu} \left(\frac{2L}{\mu} \right)^{\nu} \Delta^2.
\end{eqnarray*}

\end{proof}

\begin{lemma} \label{lower bound}
Let $f$ satisfies~\eqref{asm:regularity} function and $\widetilde{\nabla}{f}$ satisfies~\eqref{def:abs grad error}. Then for any $z \in \mathbb{R}^n$ and $\nu > 0$, there is $\phi_{z, \nu} \in \Psi$, such that: 
\begin{equation*}
    \begin{gathered}
        \phi_{z, \nu}(x) = c_{\phi} + \frac{\kappa_{\phi}}{2} \|x - u_{\phi} \|_2^2 \leqslant f(x), \quad \forall x \in \mathbb{R}^n. \\
        c_{\phi} := f(z) - \frac{1}{\mu} \left(1 + \frac{1}{4} \left(\frac{\mu}{2L} \right)^{\nu} \right) \|\nabla f(z) \|_2^2 - \frac{1}{\mu} \left(4\left(\frac{2L}{\mu} \right)^{\nu} + 2 \right) \Delta^2, \\
        \kappa_{\phi} := \mu / 2, \\
        u_{\phi} := z - \frac{2}{\mu} \widetilde{\nabla}f(z).
    \end{gathered}
\end{equation*}
\end{lemma}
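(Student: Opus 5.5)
The claim follows from strong convexity together with the upper estimate of Lemma~\ref{accelerated quadratic estimation}. For any $x$, $\mu$-strong convexity (Asm.~\ref{asm:regularity}) gives
\[
f(x) \ge f(z) + \langle \nabla f(z), x - z\rangle + \frac{\mu}{2}\|x-z\|_2^2 .
\]
I will rewrite the linear and quadratic part of this minorant as the quadratic centred at $u_\phi$ and then read off the constant $c_\phi$.

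Apply the first inequality of Lemma~\ref{accelerated quadratic estimation} with this $z$ and arbitrary $x$. Rearranged, it states that
\[
\langle \nabla f(z), x-z\rangle + \frac{\mu}{2}\|x-z\|_2^2
\ge \frac{\mu}{4}\Bigl\|x - z + \frac{2}{\mu}\wtgg f(z)\Bigr\|_2^2 - \frac{1}{\mu}\Bigl(1+\frac14\Bigl(\frac{\mu}{2L}\Bigr)^{\nu}\Bigr)\|\nabla f(z)\|_2^2 - \frac{1}{\mu}\Bigl(4\Bigl(\frac{2L}{\mu}\Bigr)^{\nu}+2\Bigr)\Delta^2 .
\]
Adding $f(z)$ to both sides and combining with the strong convexity inequality yields
\[
f(x) \ge c_\phi + \frac{\mu}{4}\|x - u_\phi\|_2^2 ,
\]
where $u_\phi = z - \frac{2}{\mu}\wtgg f(z)$, because $x - z + \frac{2}{\mu}\wtgg f(z) = x - u_\phi$. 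This is exactly $\phi_{z,\nu}$ with $\kappa_\phi = \mu/2$, since $\frac{\kappa_\phi}{2} = \frac{\mu}{4}$, and with $c_\phi$ equal to the displayed constant. Since $c_\phi\in\mathbb{R}$, $\kappa_\phi>0$ and $u_\phi\in\mathbb{R}^n$, the function $\phi_{z,\nu}$ belongs to $\Psi$ as defined in \eqref{qudaratic family}.

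The only delicate step is confirming that Lemma~\ref{accelerated quadratic estimation} supplies the inequality in the needed direction with exactly these constants. That comes down to the bookkeeping in its proof: the Young bound on the linear term with weight $\mu/4$, which is absorbed by the extra $\frac{\mu}{4}\|x-z\|^2$, and the upper bound of Lemma~\ref{fenchel for accelerated} on $\|\wtgg f(z)\|^2$. I would recheck this bookkeeping rather than reprove it. No smoothness beyond what that lemma uses is needed, and $\nu>0$ enters only through those constants.
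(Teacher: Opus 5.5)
Your proposal is correct and follows the same route as the paper, which also derives the lemma directly from $\mu$-strong convexity combined with the upper inequality of Lemma~\ref{accelerated quadratic estimation}, using $x - z + \frac{2}{\mu}\wtgg f(z) = x - u_\phi$. Your rearrangement is valid, and the constants in that lemma do come out as you describe: Young's inequality with weight $\mu/4$ on the linear term, plus Lemma~\ref{fenchel for accelerated} with $\lambda = 4(2L/\mu)^{\nu}$.
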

\begin{proof}
Proof follows directly from Lemma~\ref{accelerated quadratic estimation} and strong convexity Def.~\eqref{asm:regularity}.
\end{proof}

Now, for Algorithm~\ref{alg re-agm}, we have $u^0 = x^0, x^0 \in \mathbb{R}^d$. Let $u^k$ be defined in Algorithm~\ref{alg re-agm} and $\omega$ defined in Algorithm~\ref{alg re-agm}. Let us define the following sequences:
\begin{align}\label{function seq}
    &  \lambda_0 = 1, \quad {c_0 = f(x^0)}, \quad \lambda_{k+1} = (1 - \omega) \lambda_k, \quad \forall k \geqslant0,
    \\& \psi_k(x| c_k, \mu / 2, u^k) = c_k + \frac{\mu}{4} \|x - u^k\|_2^2, \quad \forall k \geqslant 0
    \label{psi^k_Alg1}
\end{align}

Also $\phi_{y^k, \nu}$ is the function that is obtained from Lemma \ref{lower bound}. We define, recursively
\begin{align} \label{psi seq def}
    \psi_{k + 1}(x | c_{k + 1}, \mu / 2, u^{k + 1}) := (1 - \omega) \psi_k(x | c_k, \mu / 2, u^k) +  \omega \phi_{y^k, \nu}(x), \quad \forall k \geqslant 0.
\end{align}
Error accumulation sequence also required to be defined:
\begin{equation} \label{error accumulation seq}
    E_0 = 0, \quad E_{k + 1} = (1 - \omega) E_k + 3 \left(\frac{\mu}{L} \right)^{1 - 2 \nu} \frac{\Delta_k^2}{\mu}, \quad \| \wtgg f(y^k) - \nabla f(y^k) \|_2 \leqslant \Delta_k.
\end{equation}

The following Lemma motivates the definition of function $\phi_{y^k, \nu}$.

\begin{lemma} \label{upper bound for lemma}
Let $\{\lambda_k\}_{k \geqslant 0}$ be a sequence defined in \eqref{function seq} and $\{\psi_k\}_{k \geqslant 0}$ be the corresponding functions~\eqref{psi seq def}. Then 
\begin{equation*}
    \psi_k(x) \leqslant \lambda_k \psi_0(x) + (1 - \lambda_k) f(x), \quad \forall k \geqslant 0 \;\; \text{and} \;\; \forall x \in \mathbb{R}^n.
\end{equation*}

\end{lemma}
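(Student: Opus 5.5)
We argue by induction on $k$, following the estimating-sequence argument of \cite[Lemma 2.2.2]{nesterov2018lectures}. The base case $k=0$ holds trivially: $\lambda_0=1$, so the right-hand side is exactly $\psi_0(x)$.

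For the inductive step, assume
\[
\psi_k(x)\le \lambda_k\psi_0(x)+(1-\lambda_k)f(x)\quad\text{for all }x .
\]
By the recursive definition \eqref{psi seq def},
\[
\psi_{k+1}(x)=(1-\omega)\psi_k(x)+\omega\,\phi_{y^k,\nu}(x).
\]
The lower-model property of Lemma~\ref{lower bound} gives $\phi_{y^k,\nu}(x)\le f(x)$ for every $x$. We only need $\omega\ge 0$ and $1-\omega\ge 0$ so that both coefficients are nonnegative and the inequalities can be multiplied through. Under these conditions,
\[
\psi_{k+1}(x)\le (1-\omega)\lambda_k\psi_0(x)+\bigl((1-\omega)(1-\lambda_k)+\omega\bigr)f(x).
\]
Since $\lambda_{k+1}=(1-\omega)\lambda_k$, the coefficient of $f$ equals $1-(1-\omega)\lambda_k=1-\lambda_{k+1}$. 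This closes the induction.

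A preliminary consistency check concerns the form of $\psi_{k+1}$. By Lemma~\ref{psi comb}, the convex combination of $\psi_k$ (curvature $\mu/2$, center $u^k$) and $\phi_{y^k,\nu}$ (curvature $\mu/2$, center $y^k-\tfrac{2}{\mu}\widetilde\nabla f(y^k)$) is again in $\Psi$. Its curvature is $(1-\omega)\mu/2+\omega\mu/2=\mu/2$, and its center is
\[
(1-\omega)u^k+\omega y^k-\tfrac{2\omega}{\mu}\widetilde\nabla f(y^k)=u^{k+1},
\]
which matches Algorithm~\ref{alg re-agm}. So the recursion \eqref{psi seq def} is consistent with the notation $\psi_{k+1}(\cdot\mid c_{k+1},\mu/2,u^{k+1})$. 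This step is not needed for the inequality itself, but it confirms the objects are well defined.

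The main point to verify is $\omega\in[0,1]$. Write $\theta:=\tfrac14(\mu/2L)^\nu\in(0,1/4]$. Then $s-m=2\theta$, $m=1-\theta>0$, and $q=\mu/(16L)>0$, so
\[
\omega=\frac{-2\theta+\sqrt{4\theta^2+4mq}}{2m}>0 .
\]
For the upper bound, $\omega$ is the positive root of $m\omega^2+2\theta\omega-q=0$. Evaluating this quadratic at $\omega=1$ gives $m+2\theta-q=1+\theta-q>0$ because $q\le 1/16$. Since the quadratic is increasing for $\omega>0$, its positive root satisfies $\omega<1$. With both coefficients nonnegative, the induction above goes through.
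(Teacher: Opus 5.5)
Your proposal is correct and follows the paper's proof essentially verbatim: induction, with $\phi_{y^k,\nu}\le f$ from Lemma~\ref{lower bound} and $\lambda_{k+1}=(1-\omega)\lambda_k$. Your explicit check that $\omega\in(0,1)$ is a useful addition, since the paper's proof uses the nonnegativity of $\omega$ and $1-\omega$ only implicitly, and the consistency check on $u^{k+1}$ is correct though not needed.
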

\begin{proof}
By the principle of induction. For $k = 0$, the inequality is obvious. Assume that inequality takes place for $k$. From \eqref{psi seq def}, and Lemma \ref{lower bound}, we get 
\begin{align*}
    \psi_{k + 1}(x)  &= (1 - \omega) \psi_k(x) + \omega \phi_{y^k, \nu}(x) 
    \\& \leqslant (1 - \omega) \left( (1 - \lambda_k) f(x) + \lambda_k \psi_0(x) \right) + \omega f(x) \\
    & = \lambda_{k + 1} \psi_0(x) + (1 - \lambda_{k + 1}) f(x).
\end{align*}
\end{proof}

\begin{lemma} \label{w solution estimation}
Let variables $s, m$ defined at Algorithm~\ref{alg re-agm}, $\omega$ is the largest root of the quadratic equation
\begin{equation}\label{omega lemma def}
    m \omega^2 + (s - m) \omega - q = 0,
\end{equation}
where $q = \frac{\mu}{16 L}, 0 \leqslant \nu \leqslant \nicefrac{1}{2}$. Then:

\begin{equation*}
     \frac{1}{8} \left(\frac{\mu}{2L} \right)^{1 - \nu} \leqslant \omega \leqslant \frac{1}{4} \left(\frac{\mu}{2L} \right)^{1 - \nu}.
\end{equation*}

\end{lemma}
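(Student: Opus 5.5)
The plan is to solve the quadratic~\eqref{omega lemma def} explicitly, rationalize the root, and then bound the numerator and denominator separately. Introduce the shorthand $t := \frac{\mu}{2L} \in (0, \tfrac12]$ and $a := \frac14 t^{\nu}$. Then, by the definitions in Algorithm~\ref{alg re-agm},
\[
s = 1 + a, \qquad m = 1 - a, \qquad q = \frac{t}{8}.
\]
Since $t \le 1$ and $\nu \ge 0$, we have $a \le \frac14$, so $m \ge \frac34 > 0$ and the leading coefficient is positive.

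The first step is to write the largest root and rationalize it. We have $s - m = 2a$, so
\[
\omega = \frac{-a + \sqrt{a^2 + mq}}{m} = \frac{q}{a + \sqrt{a^2 + mq}}.
\]
This agrees with the formula for $\omega$ in Algorithm~\ref{alg re-agm}, and it shows $\omega > 0$. Both bounds will be read off from this rationalized form.

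For the upper bound, drop $mq \ge 0$ inside the square root, so the denominator is at least $2a$. This gives
\[
\omega \le \frac{q}{2a} = \frac{t/8}{t^{\nu}/2} = \frac14\, t^{1-\nu}.
\]

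For the lower bound, the target $\omega \ge \frac18 t^{1-\nu}$ is equivalent to showing that the denominator is at most $q \cdot 8\, t^{\nu-1} = t^{\nu}$. That is, we need $\sqrt{a^2 + mq} \le t^{\nu} - a = \frac34 t^{\nu}$. Squaring, this becomes
\[
\frac{t^{2\nu}}{16} + \frac{m t}{8} \le \frac{9\, t^{2\nu}}{16},
\]
which simplifies to $m t \le 4\, t^{2\nu}$. This is the only place where the hypothesis $\nu \le \frac12$ is used, and I expect it to be the one step needing care. Since $0 < t \le 1$ and $2\nu \le 1$, we have $t \le t^{2\nu}$. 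Combined with $m \le 1$, this gives $m t \le t^{2\nu} \le 4 t^{2\nu}$, with room to spare. This completes both inequalities of Lemma~\ref{w solution estimation}.
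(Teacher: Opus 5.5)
Your proof is correct. It differs only mildly from the paper's. The paper argues by contradiction: it plugs the two candidate endpoints $\frac14 t^{1-\nu}$ and $\frac18 t^{1-\nu}$ into the polynomial $m\omega^2+(s-m)\omega-q$ and checks that it is positive at the first and negative at the second. This implicitly uses that the polynomial is increasing on $\omega>0$. You instead work with the rationalized closed form $\omega = q/\bigl(a+\sqrt{a^2+mq}\bigr)$ and bound its denominator.

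Both arguments reduce to the same elementary facts. For the upper bound, each drops the $mq$ contribution. For the lower bound, each uses $m\le 1$ together with $t^{1-2\nu}\le 1$; the paper writes this as $t^{2(1-\nu)}\le t$, you write it as $t\le t^{2\nu}$. This is exactly where $\nu\le\frac12$ and the standing assumption $\mu\le L$ enter.

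Your version is slightly cleaner. It needs no monotonicity or sign argument, and it makes $\omega>0$ explicit. It also avoids two typos in the paper's write-up: the upper-bound conclusion drops the factor $\frac14$, and the lower-bound hypothesis is written with $\mu/L$ instead of $\mu/(2L)$.
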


\begin{proof}
By solving the equation \eqref{omega lemma def} and choosing the largest root we get
\begin{equation} \label{omega quadsol}
    \omega = \frac{(m - s) + \sqrt{(s - m)^2 + 4mq }}{2 m}.
\end{equation}

Let us assume that $\omega > \frac{1}{4} \left(\frac{\mu}{2L} \right)^{1 - \nu} $. Then

\begin{eqnarray*}
    m \omega^2 + (s - m) \omega - q 
    & > & \left(1 - \frac{1}{4} \left(\frac{\mu}{2L} \right)^{\nu} \right) \frac{1}{16} \left(\frac{\mu}{2L} \right)^{2(1 - \nu)} \\
    & + & \frac{1}{2} \left(\frac{\mu}{2L} \right)^{\nu} \cdot \frac{1}{4} \left(\frac{\mu}{2L} \right)^{1 - \nu} - \frac{1}{16} \frac{\mu}{L} > 0.
\end{eqnarray*}

which is contradiction with \eqref{omega lemma def}. Hence $\omega \leqslant \left(\frac{\mu}{2L} \right)^{1 - \nu}$.

One can note, that $m < 1$ (from definition). So we can prove lower bound for $\omega$. Let us prove by contradiction. To that end, we assume $\omega < \frac{1}{8} \left(\frac{\mu}{L}\right)^{1 - \nu}$. Then:

\begin{align*}
    m \omega^2 + (s - m) \omega - q 
    & < \frac{1}{64} \left(\frac{\mu}{2L}\right)^{2(1 - \nu)} + \frac{1}{2} \left(\frac{\mu}{2L} \right)^{\nu} \cdot \frac{1}{8} \left(\frac{\mu}{2L}\right)^{1 - \nu} - \frac{\mu}{16 L} \\
    & \overset{0 \leqslant \nu \leqslant \nicefrac{1}{2}}{\leqslant} \frac{1}{128} \frac{\mu}{L} + \frac{1}{32} \frac{\mu}{L} - \frac{1}{16} \frac{\mu}{L} < 0.
\end{align*}
Thus, we come to a contradiction because $\omega$ is a root of the equation $m \omega^2 + (s - m) \omega - q = 0$.

\end{proof}

\begin{lemma} \label{GD step}
    Let $f$ satisfies~\eqref{asm:regularity}, $\widetilde{\nabla} f$ satisfies noise condition~\eqref{def:abs grad error},
    \begin{equation*}
        f(x^{k + 1}) \leqslant f(y^k) - \frac{1}{16 L} \|\nabla f(y^k) \|_2^2 + \frac{3}{16 L} \Delta_k^2.
    \end{equation*}
    Where:
    \begin{equation} \label{gd const def}
        \begin{gathered}
            h = \frac{1}{4 L}.
        \end{gathered}
    \end{equation}
\end{lemma}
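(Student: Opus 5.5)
This is a one-step descent estimate; we obtain it by combining the descent lemma for $L$-smooth functions with the splitting of the inexact gradient given in \eqref{def:abs grad error}.

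\textbf{Step 1 (descent lemma).} Since $x^{k+1}=y^k-h\wtgg f(y^k)$ with $h=\frac{1}{4L}$, $L$-smoothness (Asm.~\ref{asm:regularity}(i)) gives
\[
f(x^{k+1})\le f(y^k)-h\langle \nabla f(y^k),\wtgg f(y^k)\rangle+\frac{Lh^2}{2}\|\wtgg f(y^k)\|_2^2 .
\]

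\textbf{Step 2 (inner product).} Write $\wtgg f(y^k)=\nabla f(y^k)+\zeta_a$ with $\|\zeta_a\|_2\le\Delta_k$. Then
\[
-\langle \nabla f,\wtgg f\rangle=-\|\nabla f\|_2^2-\langle\nabla f,\zeta_a\rangle .
\]
Young's inequality gives $|\langle\nabla f,\zeta_a\rangle|\le\frac12\|\nabla f\|_2^2+\frac12\Delta_k^2$. Multiplying by $h=\frac1{4L}$, this term contributes at most
\[
-\frac{1}{8L}\|\nabla f\|_2^2+\frac{1}{8L}\Delta_k^2 .
\]

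\textbf{Step 3 (quadratic term).} Use $\|\wtgg f\|_2^2\le 2\|\nabla f\|_2^2+2\Delta_k^2$ (equivalently, Lemma~\ref{fenchel for accelerated} with $\nu=0$). With $\frac{Lh^2}{2}=\frac{1}{32L}$, this term contributes at most
\[
\frac{1}{16L}\|\nabla f\|_2^2+\frac{1}{16L}\Delta_k^2 .
\]

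\textbf{Step 4 (summing).} Adding the contributions from Steps~2 and~3:
\begin{itemize}
\item the gradient coefficient is $-\frac{1}{8L}+\frac{1}{16L}=-\frac{1}{16L}$;
\item the noise coefficient is $\frac{1}{8L}+\frac{1}{16L}=\frac{3}{16L}$.
\end{itemize}
This is exactly the claimed bound.

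The only care needed is the Young split in Step~2. Weighting the split differently would change the constants, so it must be balanced equally to hit $\frac{1}{16L}$ and $\frac{3}{16L}$ simultaneously.
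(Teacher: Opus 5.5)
Your argument is correct and is essentially the paper's proof: the descent lemma, Young's inequality with unit weight on $\langle\nabla f(y^k),\zeta_a\rangle$, the bound $\|\widetilde{\nabla} f(y^k)\|_2^2\le 2\|\nabla f(y^k)\|_2^2+2\Delta_k^2$, and then substituting $h=\frac{1}{4L}$. One small slip: the parenthetical ``equivalently, Lemma~\ref{fenchel for accelerated} with $\nu=0$'' is wrong, because at $\nu=0$ that lemma uses weight $4$ and gives $\frac54\|\nabla f\|_2^2+5\Delta^2$, which would make the noise coefficient $\frac{9}{32L}>\frac{3}{16L}$ (consistent with your own remark that the constants force both splits to have unit weight), so cite $\|a+b\|^2\le 2\|a\|^2+2\|b\|^2$ directly instead.
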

\begin{proof}
    Using smoothness of function $f$ we get:
    \begin{equation*}
        f(x^{k + 1}) \leqslant f(y^k) + \langle \nabla f(y^k), x^{k + 1} - y^k \rangle + \frac{L}{2} \| x^{k + 1} - y^k \|_2^2.
    \end{equation*}
    Let us estimate terms.
    Linear form:
    \begin{align*}
        \langle \nabla f(y^k), x^{k + 1} - y^k \rangle 
        & = -h \langle \nabla f(y^k), \wtgg f(y^k) \rangle \\
        & = -h \bigg (
        \| \nabla f(y^k) \|_2^2 + \langle \nabla f(y^k), \zeta_a(y^k) \rangle \bigg ) \\
        & \overset{\text{Fenchel ineq.}}{\leqslant}
        -h \bigg( \|\nabla f(y^k) \|_2^2 - 
        \frac{\lambda}{2} \|\nabla f(y^k) \|_2^2 - \frac{1}{2 \lambda} \Delta_k^2 \bigg) \\
        &  \overset{\lambda = 1}{\leqslant} -h \bigg( \frac{1}{2} \| \nabla f(y^k) \|_2^2  -
        \frac{1}{2} \Delta_k^2 \bigg).
    \end{align*}
    Quadratic term:
    \begin{eqnarray*}
        \| x^{k + 1} - y^k \|_2^2 
        & = & h^2 \| \widetilde{\nabla} f(y^k) \|_2^2 
        \leqslant 2 h^2 \left(\|\nabla f(y^k) \|_2^2 + \Delta_k^2 \right). 
    \end{eqnarray*}
    Combining this inequalities:
    \begin{eqnarray*}
        f(x^{k + 1}) 
        & \leqslant & f(y^k)
        - h \left( \frac{1}{2} \| \nabla f(y^k) \|_2^2 - \frac{1}{2} \Delta_k^2 \right) \\ 
        & + & L h^2 \left( \|\nabla f(y^k) \|_2^2 + \Delta_k^2 \right) \\
        & = & f(y^k) + h \|\nabla f(y^k) \|_2^2 \left(L h - \frac{1}{2} \right) \\ 
        & + & h \Delta_k^2 \left( \frac{1}{2} + h L \right) \\
        & \overset{h \text{ definition}}{=} & f(y^k) - \frac{1}{16 L}  \|\nabla f(y^k) \|_2^2 + \frac{3}{16 L} \Delta_k^2.
    \end{eqnarray*}
\end{proof}

\begin{lemma} \label{c > f lemma}
Let $f$ satisfies~\eqref{asm:regularity}, $\widetilde{\nabla} f$ satisfies~\eqref{def:abs grad error}, $\{x^k\}_{k \geqslant 0}$ be a sequence of points generated by Algorithm~\ref{alg re-agm}, and $c_k\, \forall k \geqslant 0$ be a minimal value of the function $\psi_k$ corresponding to Algorithm~\ref{alg re-agm}, $0 \leqslant \nu \leqslant \nicefrac{1}{2}$. Then $c_k \geqslant f(x^k) - E_k \, \forall k \geqslant 0$, where $E_k$ defined at~\eqref{error accumulation seq}.
\end{lemma}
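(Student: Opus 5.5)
We argue by induction on $k$, following Nesterov's estimate-sequence argument but carrying the inexactness through $E_k$. For $k=0$ we have $c_0=f(x^0)$ and $E_0=0$, so the claim holds. Assume $c_k\ge f(x^k)-E_k$. Lemma~\ref{psi comb} applied to the recursion \eqref{psi seq def}, with $\eta_1=1-\omega$, $\eta_2=\omega$ and $\kappa_1=\kappa_2=\mu/2$, gives $\kappa_{k+1}=\mu/2$, $u^{k+1}=(1-\omega)u^k+\omega u_\phi$ with $u_\phi=y^k-\frac{2}{\mu}\wtgg f(y^k)$ (this agrees with the update in Alg.~\ref{alg re-agm}), and
\[
c_{k+1}=(1-\omega)c_k+\omega c_{\phi}+\tfrac{\omega(1-\omega)\mu}{4}\|u^k-u_\phi\|_2^2 .
\]
We insert the induction hypothesis $c_k\ge f(x^k)-E_k$, and then convexity in the form $f(x^k)\ge f(y^k)+\langle\nabla f(y^k),x^k-y^k\rangle$. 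Together with $c_\phi$ from Lemma~\ref{lower bound}, this yields
\[
c_{k+1}\ge f(y^k)-(1-\omega)E_k+(1-\omega)\langle \nabla f(y^k),x^k-y^k\rangle-\tfrac{\omega}{\mu}s\|\nabla f(y^k)\|^2-\tfrac{\omega}{\mu}(4(2L/\mu)^\nu+2)\Delta_k^2+\tfrac{\omega(1-\omega)\mu}{4}\|u^k-u_\phi\|^2 .
\]

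We next handle the quadratic term. We write $u^k-u_\phi=(u^k-y^k)+\frac{2}{\mu}\wtgg f(y^k)$ and use the lower bound of Lemma~\ref{accelerated quadratic estimation} with $x=u^k$ and $z=y^k$. This produces $\omega(1-\omega)\big[\langle\nabla f(y^k),u^k-y^k\rangle+\frac{m}{\mu}\|\nabla f(y^k)\|^2-\frac4\mu(2L/\mu)^\nu\Delta_k^2\big]$. The choice $y^k=(\omega u^k+x^k)/(1+\omega)$ gives $x^k-y^k=\omega(y^k-u^k)$. The linear terms therefore combine into $(1-\omega)\omega\langle\nabla f(y^k),(y^k-u^k)+(u^k-y^k)\rangle=0$, so they cancel exactly. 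What remains is
\[
c_{k+1}\ge f(y^k)-(1-\omega)E_k-\tfrac{1}{\mu}\big(\omega s-\omega(1-\omega)m\big)\|\nabla f(y^k)\|^2-C_\omega\Delta_k^2 .
\]

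Now we use the defining equation \eqref{omega lemma def}, $m\omega^2+(s-m)\omega=q=\frac{\mu}{16L}$. It implies $\omega s-\omega(1-\omega)m=q$, so the gradient coefficient equals exactly $\frac{1}{16L}$. Lemma~\ref{GD step} gives $f(y^k)-\frac{1}{16L}\|\nabla f(y^k)\|^2\ge f(x^{k+1})-\frac{3}{16L}\Delta_k^2$, so the gradient terms are absorbed and we obtain $c_{k+1}\ge f(x^{k+1})-(1-\omega)E_k-C'\Delta_k^2$.

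The main obstacle is the final step: showing that
\[
C'=\tfrac{3}{16L}+\tfrac{\omega}{\mu}\big(4(2L/\mu)^\nu+2\big)+\tfrac{4\omega(1-\omega)}{\mu}(2L/\mu)^\nu
\]
is at most $\frac{3}{\mu}(\mu/L)^{1-2\nu}$, which is the increment in \eqref{error accumulation seq}. For this we plan to use Lemma~\ref{w solution estimation}, $\omega\le\frac14(\mu/2L)^{1-\nu}$. Then each $\omega$-term is of order $\frac1\mu(\mu/L)^{1-\nu}(L/\mu)^{\nu}=\frac1\mu(\mu/L)^{1-2\nu}$, and $\frac{3}{16L}\le\frac{3}{16\mu}(\mu/L)^{1-2\nu}$ holds because $\nu\le\frac12$ and $\mu\le L$. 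The remaining work is to track the numerical constants carefully (the factors $2^{\pm\nu}$ and the sum $\frac14\cdot 4\cdot 2+\dots$) and to verify that their total is at most $3$. If the constants come out slightly too large, we would enlarge the constant in the definition of $E_k$, which only changes absolute constants downstream.
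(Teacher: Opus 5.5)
Your proposal is correct and follows the paper's proof essentially step for step. The shared steps are: Lemma~\ref{psi comb} for $c_{k+1}$; the lower bound of Lemma~\ref{accelerated quadratic estimation} with $x=u^k$, $z=y^k$; the induction hypothesis plus convexity, weighted by $1-\omega\ge 0$ (guaranteed by $\omega\le\frac14(\mu/2L)^{1-\nu}$); cancellation of the linear terms via the definition of $y^k$; the quadratic equation for $\omega$, which gives the coefficient $\frac{1}{16L}$; Lemma~\ref{GD step}; and the upper bound of Lemma~\ref{w solution estimation}. The constants close without enlarging $E_k$: the three $\omega$-terms total at most $\frac{5}{2\mu}(\mu/2L)^{1-2\nu}$ and $\frac{3}{16L}\le\frac{3}{16\mu}(\mu/L)^{1-2\nu}$, so $C'\le\frac{43}{16\mu}(\mu/L)^{1-2\nu}<\frac{3}{\mu}(\mu/L)^{1-2\nu}$.
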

\begin{proof}
We will use the principle of mathematical induction to prove the statement of this lemma. For $k = 0$, it is obvious that $c_0 \geqslant f(x^0)$. Now, let us assume that $c_k \geqslant f(x^k) - E_k$, and prove the statement for $k+1$. For this, from Lemma \ref{psi comb}, Def.~\eqref{psi seq def} and definition of $\phi_{y^k}$ from Lemma~\ref{lower bound} we have 

\begin{eqnarray}\label{c combination}
    c_{k + 1}
    & = & (1 - \omega) c_k
    \\
    & + & \omega \Bigg( f(y^k) - \frac{s}{\mu} \|\nabla f(y^k) \|_2^2
    - \frac{1}{\mu} \left(4 \left(\frac{2L}{\mu} \right)^{\nu} + 2 \right) \Delta_k^2 \Bigg )
    \\
    & + & \frac{\omega (1 - \omega) \mu}{4} \left \|y^k - u^k - \frac{2}{\mu} \widetilde{\nabla} f(y^k) \right \|_2^2.
\end{eqnarray}

From Lemma~\ref{accelerated quadratic estimation} we can estimate last term:
\begin{eqnarray*}
    \frac{\omega (1 - \omega) \mu}{4} \left \|y^k - u^k - \frac{2}{\mu} \widetilde{\nabla} f(y^k) \right \|_2^2 
    & \geqslant & \omega (1 - \omega) \langle \nabla f(y^k), u^k - y^k \rangle \\
    & + & \frac{m \omega (1 - \omega)}{\mu} \|\nabla f(y^k) \|_2^2
    \\
    & - & \frac{4 \omega (1 - \omega)}{\mu} \left(\frac{2L}{\mu} \right)^{\nu} \Delta_k^2.
\end{eqnarray*}
Combining the inequalities above and grouping the corresponding terms:
\begin{eqnarray*}
    c_{k + 1}
    & \geqslant & (1 - \omega) c_k + \omega f(y^k) + \omega (1 - \omega) \langle \nabla f(y^k), u^k - y^k \rangle
    \\
    & - & \frac{1}{\mu} \left( \omega s - \omega (1 - \omega) m \right) \|\nabla f(y^k) \|_2^2
    \\
    & \overset{(i)}{-} & \frac{\omega}{\mu} \left(8\left(\frac{2L}{\mu} \right)^{\nu} + 4 \right) \Delta_k^2, 
\end{eqnarray*}
$(i)$ can be obtained from $0 \leqslant \omega < 1$ (Lemma~\ref{omega quadsol}):
\begin{equation*}
    \omega \left(4 \left(\frac{2L}{\mu} \right)^{\nu} + 2 \right) + 4 \omega (1 - \omega) \left(\frac{2L}{\mu} \right)^{\nu} \leqslant \omega \left(8\left(\frac{2L}{\mu} \right)^{\nu} + 2 \right).
\end{equation*}
From convexity of function $f$ and induction hypothesis:
\begin{eqnarray*}
    (1 - \omega) c_k 
    & \geqslant & (1 - \omega) \left( f(x^k) - E_k \right)
    \\
    & \geqslant & (1 - \omega) \left( f(y^k) + \langle \nabla f(y^k), x^k - y^k \rangle \right) - (1 - \omega) \xi_k,
\end{eqnarray*}
then, using definition $y^k$ we can eliminate the linear form:
\begin{equation*}
     \langle \nabla f(y^k), \omega (1 - \omega) (u^k - y^k) + (1 - \omega) (x^k - y^k) \rangle = 0
\end{equation*}
and continue $c_{k + 1}$ estimation:
\begin{eqnarray*}
    c_{k + 1}
    & \geqslant & f(y^k) - (1 - \omega) E_k
    \\
    & - & \frac{1}{\mu} \left( \omega s - \omega (1 - \omega) m \right) \|\nabla f(y^k) \|_2^2
    \\
    & - & \frac{2\omega}{\mu} \left(4\left(\frac{2L}{\mu} \right)^{\nu} + 1 \right) \Delta_k^2, 
\end{eqnarray*}
From $\omega$ definition one can simplify the coefficient of the gradient norm:
\begin{eqnarray*}
    c_{k + 1}
    & \geqslant & f(y^k) - (1 - \omega) E_k - \frac{q}{\mu} \|\nabla f(y^k) \|_2^2
    - \frac{2\omega}{\mu} \left(4\left(\frac{2L}{\mu} \right)^{\nu} + 1 \right) \Delta_k^2
    \\
    & \geqslant & f(y^k) - (1 - \omega) E_k
    - \frac{1}{16 L} \|\nabla f(y^k) \|_2^2
    - \frac{2\omega}{\mu} \left(4\left(\frac{2L}{\mu} \right)^{\nu} + 1 \right) \Delta_k^2.
    \\
    & \overset{\text{Lemma}~\ref{w solution estimation}}{\geqslant} & f(y^k) - (1 - \omega) E_k
    - \frac{1}{16 L} \|\nabla f(y^k) \|_2^2
    - \left(\frac{\mu}{2L} \right)^{1 - 2 \nu} \frac{5 \Delta_k^2}{2 \mu}.
\end{eqnarray*}
Since $x^{k + 1} = y^k - h \wtgg f(y^k)$, where $h$ selected consistently with the Lemma~\ref{GD step} we can obtain:
\begin{eqnarray*}
    c_{k + 1}
    & \geqslant & f(x^{k + 1}) - (1 - \omega) E_k
    - \frac{3}{16 L} \Delta_k^2 - \left(\frac{\mu}{2L} \right)^{1 - 2 \nu} \frac{5 \Delta_k^2}{2 \mu} \\
    & = & f(x^{k + 1}) - (1 - \omega) E_k - \frac{\mu}{L} \cdot \frac{3 \Delta_k^2}{16\mu} - \left(\frac{\mu}{2L} \right)^{1 - 2 \nu} \frac{5 \Delta_k^2}{2 \mu} \\
    & \geqslant & f(x^{k + 1}) - (1 - \omega) E_k - 3 \left(\frac{\mu}{L} \right)^{1 - 2 \nu} \frac{\Delta_k^2}{\mu} \\ 
    & \overset{E_k \text{ definition}}{=} & f(x^{k + 1}) - E_{k + 1}.
\end{eqnarray*}

\end{proof}

Finally we able to prove convergence of Algorithm~\ref{alg re-agm}.

\begin{theorem} \label{igm convergence}
Let $f$ be an $L$-smooth and $\mu$-strongly convex function (Assumption~\ref{asm:regularity}), $\widetilde{\nabla} f$ satisfies error condition~\eqref{def:abs grad error}. Then Algorithm~\ref{alg re-agm} generates $x^N$, s.t. 
\begin{eqnarray*}
    f(x^N) - f^* & \leqslant &  \left(1 - \frac{1}{8} \left(\frac{\mu}{2L}\right)^{1 - \nu} \right)^N \left(f(x^0) - f^* + \frac{\mu}{4} R^2 \right)
    \\
    & + & 3 \left( \frac{\mu}{L} \right)^{1 - 2\nu} \sum_{k = 0}^{N - 1} \left(1 - \frac{1}{8} \left(\frac{\mu}{2L}\right)^{1 - \nu} \right)^{N - k - 1} \frac{\Delta_k^2}{\mu}
    \\
    & \leqslant & \left(1 - \frac{1}{16} \left(\frac{\mu}{L}\right)^{1 - \nu} \right)^N LR^2
    \\
    & + & 3 \left( \frac{\mu}{L} \right)^{1 - 2\nu} \sum_{k = 0}^{N - 1} \left(1 - \frac{1}{16} \left(\frac{\mu}{L}\right)^{1 - \nu} \right)^{N - k - 1} \frac{\Delta_k^2}{\mu}
\end{eqnarray*}
where $R := \|x^0 - x^* \|_2$.
\end{theorem}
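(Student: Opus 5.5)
The plan is to combine the two estimating-sequence facts already established, Lemma~\ref{upper bound for lemma} and Lemma~\ref{c > f lemma}, through Lemma~\ref{conv lemma}, and then unroll the error recursion \eqref{error accumulation seq}.

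\textbf{Step 1: apply Lemma~\ref{conv lemma}.} Fix $N$ and take $z=x^N$, $\psi=\psi_N$, $\lambda=\lambda_N=(1-\omega)^N$ and $A=E_N$. Since $\min_x\psi_N(x)=c_N$, Lemma~\ref{c > f lemma} gives $f(x^N)\le \min\psi_N+E_N$. Lemma~\ref{upper bound for lemma} supplies the second hypothesis, $\psi_N\le\lambda_N\psi_0+(1-\lambda_N)f$. Hence
\[
f(x^N)-f^*\le(1-\omega)^N\bigl(\psi_0(x^*)-f^*\bigr)+E_N .
\]
With $c_0=f(x^0)$ and $u^0=x^0$ we have $\psi_0(x^*)=f(x^0)+\frac{\mu}{4}R^2$. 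The upper bound of Lemma~\ref{w solution estimation} keeps $\omega<1$, so $\lambda_N\in(0,1)$ as required.

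\textbf{Step 2: unroll the error recursion.} Unrolling \eqref{error accumulation seq} from $E_0=0$ gives
\[
E_N=3\left(\frac{\mu}{L}\right)^{1-2\nu}\sum_{k=0}^{N-1}(1-\omega)^{N-k-1}\frac{\Delta_k^2}{\mu}.
\]
The lower bound $\omega\ge\frac18(\mu/2L)^{1-\nu}$ from Lemma~\ref{w solution estimation} then lets us replace $1-\omega$ by $1-\frac18(\mu/2L)^{1-\nu}$ everywhere. The substitution is monotone because all terms are nonnegative. This proves the first inequality.

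\textbf{Step 3: the cruder second inequality.} Two estimates are needed.
\begin{itemize}
\item $\frac18(\mu/2L)^{1-\nu}=\frac18\,2^{\nu-1}(\mu/L)^{1-\nu}\ge\frac1{16}(\mu/L)^{1-\nu}$, because $2^{\nu-1}\ge\frac12$ for $\nu\ge0$.
\item By $L$-smoothness, $f(x^0)-f^*\le\frac L2R^2$, and since $\mu\le L$, $\frac\mu4R^2\le\frac L4R^2$. Together these give $f(x^0)-f^*+\frac\mu4R^2\le LR^2$.
\end{itemize}

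\textbf{Main obstacle.} The delicate points are in the lemmas being invoked. First, Lemma~\ref{w solution estimation} has a stray exponent $(\mu/L)$ versus $(\mu/2L)$ in its lower-bound argument and needs $\nu\le\frac12$; I would restrict the theorem to $0\le\nu\le\frac12$, matching Lemma~\ref{c > f lemma}. Second, one must check that the $\Delta_k$ in \eqref{error accumulation seq} is the per-iteration gradient error at $y^k$, so that Lemma~\ref{lower bound} and Lemma~\ref{GD step} are applied with $\Delta=\Delta_k$ at step $k$. Otherwise the proof is the standard estimating-sequence chain.
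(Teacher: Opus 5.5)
Your proposal is correct and follows the paper's proof essentially step for step. You take $f(x^N)\le c_N+E_N$ from the estimating-sequence lemma and combine it with $\psi_N\le\lambda_N\psi_0+(1-\lambda_N)f$ through the convergence lemma, then unroll $E_N$ and replace $1-\omega$ using the lower bound on $\omega$. You also supply two things the paper leaves unstated: the justification of the final crude bound ($2^{\nu-1}\ge\tfrac12$ and $f(x^0)-f^*+\tfrac{\mu}{4}R^2\le LR^2$), and the implicit restriction $0\le\nu\le\tfrac12$ inherited from the lemmas.
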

\begin{proof}
From Lemma~\ref{c > f lemma} we obtain:
\begin{equation*}
    f(x^N) \leqslant c_N + E_N \quad \text{ see denotation~\eqref{error accumulation seq} }. 
\end{equation*}
Applying Lemma~\ref{upper bound for lemma} and then Lemma~\ref{conv lemma}:
\begin{eqnarray*}
    f(x^N) - f^*
    & \leqslant &
    \lambda_N (\psi_0(x^*) - f^*) + E_N \\
    & \overset{\eqref{error accumulation seq}}{\leqslant} & \lambda_N \left(f(x^0) - f^* + \frac{\mu}{4}R^2 \right)
    \\
    & + & 3 \left( \frac{\mu}{L} \right)^{1 - 2\nu} \sum_{k = 0}^{N - 1} (1 - \omega)^{N - k - 1} \frac{\Delta_k^2}{\mu}
    \\
    & \overset{\text{Lemma}~\ref{w solution estimation}}{\leqslant} & \left(1 - \frac{1}{8} \left(\frac{\mu}{2L}\right)^{1 - \nu} \right)^N \left(f(x^0) - f^* + \frac{\mu}{4} R^2 \right)
    \\
    & + & 3 \left( \frac{\mu}{L} \right)^{1 - 2\nu} \sum_{k = 0}^{N - 1} \left(1 - \frac{1}{8} \left(\frac{\mu}{2L}\right)^{1 - \nu} \right)^{N - k - 1} \frac{\Delta_k^2}{\mu}
    \\
    & \leqslant & \left(1 - \frac{1}{16} \left(\frac{\mu}{L}\right)^{1 - \nu} \right)^N LR^2
    \\
    & + & 3 \left( \frac{\mu}{L} \right)^{1 - 2\nu} \sum_{k = 0}^{N - 1} \left(1 - \frac{1}{16} \left(\frac{\mu}{L}\right)^{1 - \nu} \right)^{N - k - 1} \frac{\Delta_k^2}{\mu}.
\end{eqnarray*}

\end{proof}

So we can move to Prop.~\ref{prop:igm_adversarial_various}.

\textit{\textbf{Proof of Proposition}~\ref{prop:igm_adversarial_various} (case $\mu > 0$)}
\begin{proof}
    Using Theorem~\ref{igm convergence} and substitute $\nu = 1 - \frac{1}{p}, \Delta_k = 2\sqrt{d L \delta_k}$:
    \begin{eqnarray*}
    f(x^N) - f^* \leqslant \left(1 - \frac{1}{16} \left(\frac{\mu}{L}\right)^{\frac{1}{p}} \right)^N LR^2
    + 6 d \left( \frac{\mu}{L} \right)^{2\frac{1 - p}{p}} \sum_{k = 0}^{N - 1} \left(1 - \frac{1}{16} \left(\frac{\mu}{L}\right)^{\frac{1}{p}} \right)^{N - k - 1} \delta_k.
    \end{eqnarray*}
    We will apply Master Lemma~\ref{lemma:master} for convergence above. In designations of Master Lemma:
    \begin{equation*}
        \alpha_k = 6 d \left( \frac{\mu}{L} \right)^{2\frac{1 - p}{p}} \left(1 - \frac{1}{16} \left(\frac{\mu}{L}\right)^{\frac{1}{p}} \right)^{N - k - 1}, \quad 0 \leqslant k \leqslant N - 1.
    \end{equation*}
    Number of iterations to guarantee $\mathcal{E}_0(N, \Theta) \leqslant \nicefrac{\varepsilon}{2}$:
    \begin{equation*}
        N \leqslant 16 \left(\frac{L}{\mu}\right)^{\frac{1}{p}} \ln \left( \frac{2 LR^2}{\varepsilon} \right) + 1 \leqslant 17 \left(\frac{L}{\mu}\right)^{\frac{1}{p}} \ln \left( \frac{2 LR^2}{\varepsilon} \right) = O \left( \left(\frac{L}{\mu}\right)^{\frac{1}{p}} \ln \left( \frac{LR^2}{\varepsilon} \right) \right). 
    \end{equation*}
    Then, using Master Lemma~\ref{lemma:master} we must choose:
    \begin{eqnarray*}
        A_N & = & \sum_{k = 0}^{N - 1} \alpha_k^{\frac{\gamma}{1 + \gamma}} = \left(6 d \right)^{\frac{\gamma}{1 + \gamma}} \left( \frac{\mu}{L} \right)^{\frac{2\gamma(1 - p)}{p(1 + \gamma)}} \sum_{k = 0}^{N - 1} \left(1 - \frac{1}{16} \left(\frac{\mu}{L}\right)^{\frac{1}{p}} \right)^{(N - k - 1) \frac{\gamma}{1 + \gamma}} 
        \\
        & = & \left(6 d \right)^{\frac{\gamma}{1 + \gamma}} \left( \frac{\mu}{L} \right)^{2 \frac{2\gamma(1 - p)}{p(1 + \gamma)}} \frac{1 - \left(1 - \frac{1}{16} \left(\frac{\mu}{L}\right)^{\frac{1}{p}} \right)^{\frac{\gamma}{1 + \gamma}N}}{1 - \left(1 - \frac{1}{16} \left(\frac{\mu}{L}\right)^{\frac{1}{p}} \right)^{\frac{\gamma}{1 + \gamma}}}
        \\
    \end{eqnarray*}
    Hence estimation for Wall-clock time can be provided:
    \begin{equation*}
        T_{\mathrm{total}}^{\mathrm{IGM}} = \left(\frac{\varepsilon}{2} \right)^{-\gamma} A_N^{1 + \gamma}
        \leqslant
        \left( \frac{\varepsilon}{12 d} \right)^{-\gamma} \left(\frac{\mu}{L}\right)^{2\frac{\gamma(1 - p)}{p}} 
        \left(\frac{1 - \left(1 - \frac{1}{16} \left(\frac{\mu}{L}\right)^{\frac{1}{p}} \right)^{\frac{\gamma}{1 + \gamma} N}}{1 - \left(1 - \frac{1}{16} \left(\frac{\mu}{L}\right)^{\frac{1}{p}} \right)^{\frac{\gamma}{1 + \gamma}}} \right)^{1 + \gamma}.
    \end{equation*}

    There are two cases.

    \textit{Case 1.} $\gamma \approx 0$ (sufficient small).

    \begin{equation*}
        A_N = \sum_{k = 0}^{N - 1} \alpha_k^{\frac{\gamma}{1 + \gamma}} \overset{\alpha_k \leqslant 1}{\leqslant} N^{1 + \gamma}.
    \end{equation*}

    \begin{eqnarray*}
        T_{\mathrm{total}}^{\mathrm{IGM}} & \leqslant & \left( \frac{\varepsilon}{12 d} \right)^{-\gamma} \left(\frac{\mu}{L}\right)^{\frac{2\gamma(1 - p)}{p}} N^{1 + \gamma} \leqslant 17^{1 + \gamma} \left( \frac{\varepsilon}{12 d} \right)^{-\gamma} \left(\frac{L}{\mu}\right)^{2\gamma + \frac{1 - \gamma}{p}} \ln^{1 + \gamma} \left( \frac{2 LR^2}{\varepsilon} \right) \\
        & = & \widetilde{O} \left( \left( \frac{\varepsilon}{12 d} \right)^{-\gamma} \left(\frac{L}{\mu}\right)^{2\gamma + \frac{1 - \gamma}{p}} \right).
    \end{eqnarray*}


    \textit{Case 2.} $\gamma \not \approx 0$ (enough big).

    \begin{equation*}
        \left(\frac{1 - \left(1 - \frac{1}{16} \left(\frac{\mu}{L}\right)^{\frac{1}{p}} \right)^{\frac{\gamma}{1 + \gamma} N}}{1 - \left(1 - \frac{1}{16} \left(\frac{\mu}{L}\right)^{\frac{1}{p}} \right)^{\frac{\gamma}{1 + \gamma}}} \right)^{1 + \gamma}
        \leqslant \left( \frac{1}{\frac{1}{16} \frac{\gamma}{1 + \gamma} \left(\frac{\mu}{L}\right)^{\frac{1}{p}}} \right)^{1 + \gamma} \leqslant \left(16 \frac{1 + \gamma}{\gamma} \right)^{1 + \gamma} \left(\frac{L}{\mu}\right)^{\frac{1 + \gamma}{p}}.
    \end{equation*}

    Then $T_{\mathrm{total}}^{\mathrm{IGM}}$ can be estimated as:

    \begin{eqnarray*}
        T_{\mathrm{total}}^{\mathrm{IGM}} & \leqslant & \left( \frac{\varepsilon}{12 d} \right)^{-\gamma} \left(\frac{\mu}{L}\right)^{\frac{2\gamma(1 - p)}{p}} \left(16 \frac{1 + \gamma}{\gamma} \right)^{1 + \gamma} \left(\frac{L}{\mu}\right)^{\frac{1 + \gamma}{p}} 
        \\
        & = & O \left((1 + \gamma^{-1}) \left( \frac{\varepsilon}{12 d} \right)^{-\gamma} \left(\frac{L}{\mu}\right)^{\frac{1 - \gamma}{p} + 2\gamma} \right).
    \end{eqnarray*}

\end{proof}

\begin{remark}
    Same for Remark~\ref{remark:const delta adversarial} one can note, that for $\gamma < 1$, the wall-clock time complexity is minimal for $p=2$, and optimal for $p = 1$, if $\gamma \ge 1$.
\end{remark}

\textit{\textbf{Proof of Proposition}~\ref{prop:igm_adversarial_various} (convex case)}
\begin{proof}
    We will use regularization technique described at~\citet{vasin2023accelerated} (Remark 4.2). We introduce:
    \begin{equation*}
        \begin{gathered}
            f_{\mu}(x | x^0) = f(x) + \frac{\mu}{2} \|x - x^0 \|_2^2.
            x_\mu^* \text{ - solution for problem: } f_{\mu}(\cdot | x^0) \to \min_{x \in \mathbb{R}^n}, \\
            f_{\mu}^* = f_{\mu}(x^*_{\mu} | x^0), \\
            R_\mu = \|x_{\mu}^* - x^0 \|_2, \\
            \wtgg f_{\mu}(x) = \wtgg f(x) + \mu (x - x^0).
        \end{gathered}
    \end{equation*}
    Note, that $f_{\mu}$ is $\mu$-strongly convex, since $f$ is convex. We will apply result of Proposition~\ref{prop:igm_adversarial_various} to obtain $\mathcal{E}_{\mu}(N,\{\delta_k\}_{k\in[N]},\Theta) \leqslant \frac{\varepsilon}{2}$, where $\mathcal{E}_{\mu}$ corresponding error bound for $f_{\mu}$. We chose $\mu = \frac{\varepsilon}{R^2}$, thus:
    \begin{equation*}
        T_{\mathrm{total}}^{\mathrm{IGM}} = \begin{cases}
            \widetilde{O} \left( \left( \frac{\varepsilon}{24 d} \right)^{-\gamma} \left(\frac{2 LR^2}{\varepsilon}\right)^{\frac{1 - \gamma}{p} + 2\gamma} \right), & ~\text{if}~~\gamma \approx 0,\\
            O \left( (1 + \gamma^{-1}) \left( \frac{\varepsilon}{24 d} \right)^{-\gamma} \left(\frac{2 LR^2}{\varepsilon}\right)^{\frac{1 - \gamma}{p} + 2\gamma} \right) & ~\text{if}~~\gamma \not\approx 0.
        \end{cases}
    \end{equation*}
    Then we can estimate $\mathcal{E}(N,\{\delta_k\}_{k\in[N]},\Theta)$ for original function $f$:
    \begin{eqnarray*}
        \mathcal{E}(N,\{\delta_k\}_{k\in[N]},\Theta) = f(x^N) - f^*
        & \leqslant & f(x^N) + \frac{\mu}{2} \|x^N - x^0 \|_2^2 - f^* \\
        & = & f_{\mu}(x^N | x^0) - f^* = f_{\mu}(x^N | x^0) - f_{\mu}(x^* | x^0) + \frac{\mu}{2} R^2 \\
        & \leqslant & f_{\mu}(x^N | x^0) - f_{\mu}^* + \frac{\varepsilon}{2} \leqslant \varepsilon.
    \end{eqnarray*}
\end{proof}

\section{Proof of Proposition~\ref{prop:adv2}}
\label{app:adv_design}

We derive the optimal wall-clock design for the adversarial-noise model
directly from the bound in Sec.~\ref{sec:adv_noise}. Set
\[
    T:=BN,
    \qquad
    p:=\frac{\beta-1}{\beta}.
\]
The wall-clock objective is $  T_{\rm total}=T\delta^{-\gamma}.$

\textbf{Step 1: optimizing out the smoothing radius.}

The adversarial bound contains two bias terms depending on \(h\) and
\(\delta\), $\kappa_\beta Lh^{\beta-1}$ and $\frac{d\delta}{h}$. For fixed \(\delta\), we balance them:
\[
    \kappa_\beta Lh^{\beta-1}
    \asymp
    \frac{d\delta}{h}.
\]
Thus
\[
    h^\beta
    =
    \frac{d\delta}{\kappa_\beta L},
    \qquad
    h^*(\delta)
    =
    \left(
        \frac{d\delta}{\kappa_\beta L}
    \right)^{1/\beta}.
\]

With this choice,
\[
    \kappa_\beta L(h^*)^{\beta-1}
    \asymp
    \frac{d\delta}{h^*}
    \asymp
    a_1\delta^p,
\quad
    a_1
    :=
    (\kappa_\beta L)^{1/\beta}d^{(\beta-1)/\beta}.
\]
Similarly,
\[
    \kappa_\beta^2L(h^*)^{2(\beta-1)}
    \asymp
    \frac{d^2\delta^2}{L(h^*)^2}
    \asymp
    a_2\delta^{2p},\quad
    a_2
    :=
    \kappa_\beta^{2/\beta}
    L^{\frac{2}{\beta}-1}
    d^{\frac{2(\beta-1)}{\beta}}.
\]

Therefore, after substituting \(h=h^*(\delta)\), the error bound becomes
\[
\mathcal E(N,T,\delta)
\lesssim
\mathcal E_0(N,T)
+
\frac{R}{\sqrt T}
\left(
    c_1\delta^{1/(2\beta)}
    +
    c_2\delta^{p/2}
\right)
+
2Ra_1\delta^p
+
2Na_2\delta^{2p},
\]
where
\[
    \mathcal E_0(N,T)
    :=
    \frac{LR^2}{N^2}
    +
    \frac{LR^2}{T}
    +
    \frac{Rc_0}{\sqrt T},
    \qquad
    c_0^2:=\kappa d\sigma_*.
\]

\textbf{Step 2: the \(\delta\)-independent part.}

The \(\delta\)-independent part contains two horizons, $N$ and $ T = BN$. With respect to \(N\),
\[
    \mathcal E_{0,N}(N)
    =
    \frac{LR^2}{N^2},
    \qquad
    \mathcal E_{0,N}(N)\asymp N^{-2}.
\]
Thus the \(N\)-exponent is
\[
    \boxed{\beta_0^{(N)}=2.}
\]

With respect to \(T\), the large-\(T\) part is
\[
    \frac{LR^2}{T}
    +
    \frac{Rc_0}{\sqrt T}.
\]
In the high-precision regime and for \(c_0>0\), the term
\(Rc_0/\sqrt T\) dominates \(LR^2/T\). Hence
\[
    \mathcal E_{0,T}(T)\asymp T^{-1/2},
    \qquad
    \boxed{\beta_0^{(T)}=\frac12.}
\]
The constraint $\frac{Rc_0}{\sqrt T}\lesssim \varepsilon$
gives
\[
    T\gtrsim
    T_{\min}
    :=
    \frac{R^2c_0^2}{\varepsilon^2}
    =
    \frac{R^2\kappa d\sigma_*}{\varepsilon^2}.
\]

\textbf{Step 3: fidelity channels.}

After balancing \(h\), the active fidelity channels are $2Ra_1\delta^p$ and $2Na_2\delta^{2p}.$ For fixed \(N\), the accuracy constraint imposes
\[
    \delta
    \lesssim
    \delta_D
    :=
    \left(
        \frac{\varepsilon}{2Ra_1}
    \right)^{1/p},
    \quad
    \delta
    \lesssim
    \delta_E(N)
    :=
    \left(
        \frac{\varepsilon}{2Na_2}
    \right)^{1/(2p)}.
\]
Since $    T_{\rm total}=T\delta^{-\gamma}$, 
is decreasing in \(\delta\) for every \(\gamma>0\), the optimal fidelity
is the largest admissible one:
\[
    \delta^*(N)
    =
    \min\{\delta_D,\delta_E(N)\}.
\]

The transition between the two fidelity channels occurs when $    \delta_D=\delta_E(N).$
Equivalently,
\[
    \left(
        \frac{\varepsilon}{2Ra_1}
    \right)^{1/p}
    =
    \left(
        \frac{\varepsilon}{2Na_2}
    \right)^{1/(2p)}.
\]
Solving for \(N\) gives
\[
    N_{\rm cr}
    =
    \frac{2R^2a_1^2}{a_2\varepsilon}.
\]
Using $\frac{a_1^2}{a_2}=L,$
we obtain
\[
    \boxed{
    N_{\rm cr}
    \asymp
    \frac{LR^2}{\varepsilon}.
    }
\]

Thus, in the large-\(N\) regime \(N\gtrsim N_{\rm cr}\), the accumulated
quadratic channel is active:
\[
    \delta^*(N)
    =
    \left(
        \frac{\varepsilon}{2Na_2}
    \right)^{1/(2p)}.
\]

\textbf{Step 4: optimizing \(N\).}

In the large-\(N\) regime,
\[
    T_{\rm total}(N,T)
    =
    T
    \left(
        \frac{2Na_2}{\varepsilon}
    \right)^{\gamma/(2p)}.
\]
For fixed \(T\), this is increasing in \(N\). Therefore the optimal
choice is the smallest admissible \(N\),
\[
    \boxed{
    N^*
    \asymp
    N_{\rm cr}
    \asymp
    \frac{LR^2}{\varepsilon}.
    }
\]

Substituting \(N^*=N_{\rm cr}\) into the active fidelity constraint gives
\[
    \delta^*
    =
    \left(
        \frac{\varepsilon}{2a_2N_{\rm cr}}
    \right)^{1/(2p)}
    \asymp
    \left(
        \frac{\varepsilon^2}{a_2LR^2}
    \right)^{1/(2p)}.
\]
Expanding \(a_2\), we obtain
\[
    \boxed{
    \delta^*
    \asymp
    d^{-1}
    \kappa_\beta^{-1/(\beta-1)}
    L^{-1/(\beta-1)}
    R^{-\beta/(\beta-1)}
    \varepsilon^{\beta/(\beta-1)}.
    }
\]
The corresponding smoothing radius is
\[
    \boxed{
    h^*
    =
    h^*(\delta^*)
    =
    \left(
        \frac{d\delta^*}{\kappa_\beta L}
    \right)^{1/\beta}
    \asymp
    \kappa_\beta^{-1/(\beta-1)}
    L^{-1/(\beta-1)}
    R^{-1/(\beta-1)}
    \varepsilon^{1/(\beta-1)}.
    }
\]

\textbf{Step 5: optimizing \(T=BN\).}

The \(\delta\)-independent term \(Rc_0/\sqrt T\) gives
\[
    T\gtrsim
    T_{\min}
    =
    \frac{R^2c_0^2}{\varepsilon^2}.
\]
Since \(T_{\rm total}=T\delta^{-\gamma}\) is increasing in \(T\), the
optimal choice is
\[
    \boxed{
    T^*
    \asymp
    T_{\min}
    =
    \frac{R^2\kappa d\sigma_*}{\varepsilon^2}.
    }
\]

Therefore
\[
    \boxed{
    B^*
    =
    \frac{T^*}{N^*}
    \asymp
    \frac{\kappa d\sigma_*}{L\varepsilon}.
    }
\]

\textbf{Step 6: optimized wall-clock cost.}

The optimized wall-clock cost is
$ T_{\rm total}^*
    =
    T^*(\delta^*)^{-\gamma}.$ Using the expressions above,
\[
    T^*
    \asymp
    \frac{R^2\kappa d\sigma_*}{\varepsilon^2},
\]
and
\[
    (\delta^*)^{-\gamma}
    \asymp
    d^\gamma
    \kappa_\beta^{\gamma/(\beta-1)}
    L^{\gamma/(\beta-1)}
    R^{\gamma\beta/(\beta-1)}
    \varepsilon^{-\gamma\beta/(\beta-1)}.
\]
Hence
\[
    \boxed{
    T_{\rm total}^*
    \asymp
    \kappa\sigma_*
    d^{1+\gamma}
    \kappa_\beta^{\gamma/(\beta-1)}
    L^{\gamma/(\beta-1)}
    R^{2+\gamma\beta/(\beta-1)}
    \varepsilon^{-\left(2+\gamma\beta/(\beta-1)\right)}.
    }
\]
Equivalently, hiding problem-dependent constants,
\[
    T_{\rm total}^*
    =
    O\!\left(
        d^{1+\gamma}
        \varepsilon^{-\left(
            2+\frac{\gamma\beta}{\beta-1}
        \right)}
    \right).
\]

\textbf{Step 7: batching regimes.}

Since
\[
    B^*=\frac{T^*}{N^*},
\]
the batching regime is determined by comparing \(B^*\) and \(N^*\), or
equivalently comparing \(T^*\) with \((N^*)^2\). We have
\[
    T^*
    \asymp
    \frac{R^2\kappa d\sigma_*}{\varepsilon^2},
\quad
    (N^*)^2
    \asymp
    \frac{L^2R^4}{\varepsilon^2}.
\]
Therefore
\[
    B^*\le N^*
    \quad\Longleftrightarrow\quad
    T^*\le (N^*)^2
    \quad\Longleftrightarrow\quad
    \kappa d\sigma_*
    \lesssim
    L^2R^2.
\]
Similarly,
\[
    B^*>N^*
    \quad\Longleftrightarrow\quad
    \kappa d\sigma_*
    \gtrsim
    L^2R^2.
\]
Thus the batching transition is
\[
    \boxed{
    \kappa d\sigma_*
    \asymp
    L^2R^2.
    }
\]

\section{Proof of Proposition~\ref{prop:acc_tsyb_design}}
\label{app:acc_tsyb_design}
Throughout this proof we use the bounded-trajectory assumption
\[
    \widetilde R
    =
    \max_t\{\|z_t-x^*\|,\|z_t-y_t\|\}
    =
    O(R).
\]
Thus the bias term
\(\widetilde R\kappa_\beta L_\beta h^{\beta-1}\) is of order
\(R\kappa_\beta L_\beta h^{\beta-1}\).

\textbf{Small-batch regime: \(B\le 4\kappa d\).} 

Set $Q:=BN .$ Then we get
\[
E(Q,B,\delta,h)
\lesssim
\underbrace{
\frac{\kappa^2d^2LR^2}{Q^2}
}_{\mathcal E_0(Q)}
+
\underbrace{
\frac{Q}{\kappa Lh^2}\delta^2
}_{\text{Tsybakov channel}}
+
S_{\rm sm}(Q,B,h),
\]
where the smoothing remainder is
\[
S_{\rm sm}(Q,B,h)
=
\frac{QLh^2}{\kappa d}
+
R\kappa_\beta L_\beta h^{\beta-1}
+
\frac{Q}{B}
\frac{(\kappa_\beta L_\beta h^{\beta-1})^2}{L}.
\]
Hence $ \mathcal E_0(Q)
    =
    \kappa^2d^2LR^2\, Q^{-2},
    \qquad \mathcal E_0(Q) \sim Q^{-2}.$ For a fixed admissible \(h\), the active wall-clock subproblem is
\[
    \min_{Q,\delta} Q\delta^{-\gamma}
    \qquad
    \text{s.t.}
    \qquad
    \frac{A}{Q^2}+CQ\delta^2 \lesssim \varepsilon,
\]
with $ A:=\kappa^2d^2LR^2, C:=\frac{1}{\kappa Lh^2}.$ For fixed \(Q\), the largest feasible \(\delta\) is optimal:
\[
    \delta^2(Q)
    =
    \frac{\varepsilon-A/Q^2}{CQ}.
\]
Substituting this into \(T=Q\delta^{-\gamma}\) gives
\[
    T(Q)
    =
    C^{\gamma/2}
    Q^{1+\gamma/2}
    \left(
        \bar\varepsilon-\frac{A}{Q^2}
    \right)^{-\gamma/2}.
\]
Let $ s:=\frac{A/Q^2}{\varepsilon}\in(0,1).$ Then $    Q=\left(\frac{A}{\varepsilon s}\right)^{1/2},$ and, up to \(s\)-independent factors,
\[
    T(s)
    \propto
    s^{-(2+\gamma)/4}(1-s)^{-\gamma/2}.
\]
The first-order condition yields
\[
    s^*
    =
    \frac{2+\gamma}{2+3\gamma}.
\]
Therefore
\[
    \frac{A}{(Q^*)^2}
    =
    \frac{2+\gamma}{2+3\gamma}\varepsilon,
    \qquad
    CQ^*(\delta^*)^2
    =
    \frac{2\gamma}{2+3\gamma}\varepsilon.
\]
Thus
\[
    Q^*
    =
    \kappa dR\sqrt L\,
    \varepsilon^{-1/2}
    \left(
        \frac{2+3\gamma}{2+\gamma}
    \right)^{1/2},
\]
and
\[
    \delta^*
    =
    hL^{1/4}(dR)^{-1/2}
    \varepsilon^{3/4}
    G(\gamma),
\]
where
\[
    G(\gamma)
    :=
    \frac{(2\gamma)^{1/2}(2+\gamma)^{1/4}}
         {(2+3\gamma)^{3/4}}.
\]

It remains to choose \(h\). Since
\[
    T^*
    =
    Q^*(\delta^*)^{-\gamma}
    \propto h^{-\gamma},
\]
the optimal smoothing radius is the largest one allowed by
\(S_{\rm sm}(Q^*,B,h)\lesssim\varepsilon\). Therefore
\[
    h_{\rm sm}^*(B)
    =
    \min\{H_1,H_2,H_3(B)\},
\]
where
\[
    H_1
    \asymp
    \left(
        \frac{\varepsilon\kappa d}{Q^*L}
    \right)^{1/2}
    \asymp
    \varepsilon^{3/4}R^{-1/2}L^{-3/4},
\quad
    H_2
    \asymp
    \left(
        \frac{\varepsilon}{R\kappa_\beta L_\beta}
    \right)^{1/(\beta-1)},
\]
and
\[
    H_3(B)
    \asymp
    \left(
        \frac{\varepsilon LB}{Q^*\kappa_\beta^2L_\beta^2}
    \right)^{1/(2\beta-2)}
    \asymp
    \left(
        \frac{B\varepsilon^{3/2}\sqrt L}
        {\kappa dR\kappa_\beta^2L_\beta^2}
    \right)^{1/(2\beta-2)}.
\]
Consequently,
\[
    T_{\rm sm}^*(B)
    \asymp
    \kappa
    d^{1+\gamma/2}
    R^{1+\gamma/2}
    L^{1/2-\gamma/4}
    \varepsilon^{-1/2-3\gamma/4}
    \bigl(h_{\rm sm}^*(B)\bigr)^{-\gamma}
    C_\gamma,
\]
where \(C_\gamma\) depends only on \(\gamma\).

The active terms depend on \(N\) and \(B\) only through \(Q=BN\).
Thus batching does not improve the leading active trade-off itself.
It can only affect the smoothing cap through \(H_3(B)\). In the regime
where \(H_1\) or \(H_2\) is active, one may take \(B^*=1\) and
\(N^*=Q^*\). If \(H_3(B)\) is active, increasing \(B\) can enlarge the
admissible smoothing radius, but only up to the boundary
\(B\le 4\kappa d\).

\textbf{Large-batch regime: \(B>4\kappa d\).}

In this case, we get
\[
E(N,B,\delta,h)
\lesssim
\underbrace{
\frac{LR^2}{N^2}
}_{\mathcal E_0(N)}
+
\underbrace{
\frac{N\kappa d^2}{BLh^2}\delta^2
}_{\text{Tsybakov channel}}
+
S_{\rm lg}(N,B,h),
\]
where
\[
S_{\rm lg}(N,B,h)
=
\frac{N\kappa dLh^2}{B}
+
R\kappa_\beta L_\beta h^{\beta-1}
+
\frac{N(\kappa_\beta L_\beta h^{\beta-1})^2}{L}.
\]
Hence
\[
    \mathcal E_0(N)
    =
    LR^2\,N^{-2},
    \qquad \mathcal E_0(N)\sim N^{-2}
\]
For fixed \(B\) and admissible \(h\), the active subproblem is
\[
    \min_{N,\delta} NB\delta^{-\gamma}
    \qquad
    \text{s.t.}
    \qquad
    \frac{A}{N^2}
    +
    C_BN\delta^2
    \le \varepsilon,
\]
where $A:=LR^2,$ $
    C_B:=\frac{\kappa d^2}{BLh^2}.$
Repeating the one-dimensional optimization above gives
\[
    N^*
    =
    R\sqrt L\,
    \varepsilon^{-1/2}
    \left(
        \frac{2+3\gamma}{2+\gamma}
    \right)^{1/2},
\]
and
\[
    \delta^*(B,h)
    =
    B^{1/2}
    hL^{1/4}
    (\kappa d^2R)^{-1/2}
    \varepsilon^{3/4}
    G(\gamma).
\]
Substituting this into \(T=NB\delta^{-\gamma}\) yields
\[
    T_{\rm lg}^*(B,h)
    \asymp
    B^{1-\gamma/2}
    \kappa^{\gamma/2}
    d^\gamma
    R^{1+\gamma/2}
    L^{1/2-\gamma/4}
    \varepsilon^{-1/2-3\gamma/4}
    h^{-\gamma}
    C_\gamma .
\]
Thus the dependence on \(B\) is
\[
    T_{\rm lg}^*(B,h)\propto B^{1-\gamma/2}.
\]

If \(0<\gamma<2\), increasing \(B\) is not beneficial at the active
trade-off level, so the optimum is attained at the smallest admissible
large batch and is covered, up to constants, by the small-batch regime.
If \(\gamma=2\), the leading dependence on \(B\) is flat. If
\(\gamma>2\), then \(T_{\rm lg}^*(B,h)\) decreases with \(B\) as long
as the interior solution has \(\delta^*(B,h)<1\). Therefore the optimal
large-batch strategy increases \(B\) until the boundary $    \delta^*=1$
is reached.

\textbf{Boundary solution for \(\gamma>2\).}

At the boundary \(c(\delta^*)=1\), so the active problem becomes
\[
    \min_{N,B} NB
    \qquad
    \text{s.t.}
    \qquad
    \frac{LR^2}{N^2}
    +
    \frac{N\kappa d^2}{BLh^2}
    \le \varepsilon.
\]
For fixed \(N\), the smallest feasible \(B\) is
\[
    B(N)
    =
    \frac{N\kappa d^2}
    {Lh^2(\varepsilon-LR^2/N^2)}.
\]
Therefore
\[
    T(N)
    =
    NB(N)
    =
    \frac{\kappa d^2N^2}
    {Lh^2(\varepsilon-LR^2/N^2)}.
\]
Minimizing in \(N\) gives $    \frac{LR^2}{(N^*)^2}
    \asymp
    \varepsilon,$
and hence
\[
    N^*
    \asymp
    R\sqrt L\,\varepsilon^{-1/2}.
\]
The corresponding batch size is
\[
    B^*
    \asymp
    \frac{\kappa d^2N^*}{\varepsilon Lh^2}
    \asymp
    \kappa d^2R L^{-1/2}
    \varepsilon^{-3/2}h^{-2}.
\]
Thus
\[
    T^*
    =
    N^*B^*
    \asymp
    \kappa d^2R^2
    \varepsilon^{-2}h^{-2}.
\]

It remains to choose \(h\). Substituting \(N^*\) and \(B^*(h)\) into
\(S_{\rm lg}(N,B,h)\lesssim\varepsilon\) gives
\[
    h_{\rm lg}^*
    =
    \min\{K_1,K_2,K_3\},
\]
where
\[
    K_1
    \asymp
    d^{1/4}L^{-1/2},
\quad
    K_2
    \asymp
    \left(
        \frac{\varepsilon}{R\kappa_\beta L_\beta}
    \right)^{1/(\beta-1)},
\quad
    K_3
    \asymp
    \left(
        \frac{\varepsilon^{3/2}\sqrt L}
        {R\kappa_\beta^2L_\beta^2}
    \right)^{1/(2\beta-2)}.
\]
Since \(T^*\propto h^{-2}\), the optimal choice is
\(h=h_{\rm lg}^*\), yielding
\[
    T_{\rm overbatch}^*
    \asymp
    \kappa d^2R^2
    \varepsilon^{-2}
    \bigl(h_{\rm lg}^*\bigr)^{-2}.
\]
This proves the large-batch overbatching regime for \(\gamma>2\).

\section{Proof of Proposition~\ref{prop:akhavan_walltime_batched}}
\label{app:proof_tsybakov_batch}
\label{app:akhavan_batched_bound}
\label{app:akhavan_batched_proof}

We first justify the batched extension used in
\eqref{eq:akhavan_bound_batched}. Let
\[
    \bar g_t=\frac1B\sum_{i=1}^B g_t^{(i)}
\]
be the average of \(B\) conditionally independent copies of the
\(\ell_2\)-randomized estimator. Starting from the proof of Corollary~19
in~\citet{akhavan2024gradient}, batching only changes the oracle-noise
contributions: the deterministic and smoothing-bias terms remain unchanged,
while the Tsybakov variance level is divided by \(B\). Indeed, writing
\[
    g_t=s_t+n_t,
    \qquad
    n_t=\frac{d}{2h_t}(\xi_t-\xi'_t)\zeta_tK(r_t),
\]
the independent averaging gives
\[
    \mathbb E\left[
        \left\|
        \frac1B\sum_{i=1}^B n_t^{(i)}
        \right\|^2
        \,\middle|\,x_t
    \right]
    =
    \frac1B\mathbb E[\|n_t\|^2\mid x_t].
\]
Thus every occurrence of the Tsybakov variance level \(\delta^2\) in the
noise-dependent terms is replaced by \(\delta^2/B\). Applying the same
strongly convex argument as in~\citet{akhavan2024gradient} with this
replacement gives \eqref{eq:akhavan_bound_batched}.

We now solve the leading-order wall-clock problem. Put
\[
    a:=\frac{\beta-1}{\beta},
    \qquad
    C_0:=\frac{dL^2R^2}{\mu},
    \qquad
    C_A:=\frac{L_\beta^{2/\beta}}{\mu}d^{2a}.
\]
Ignoring the lower-order residual channel, the active constraint is
\[
    \frac{C_0}{N}+C_A\delta^{2a}(BN)^{-a}\le \varepsilon .
\]
Let
\[
    Q:=BN,
    \qquad
    \eta_N:=\varepsilon-\frac{C_0}{N},
    \qquad
    S_N:=\left(\frac{C_A}{\eta_N}\right)^{1/a}.
\]
For fixed \(N\) and \(\delta\), feasibility requires \(\eta_N>0\), and the
smallest feasible total number of oracle calls is
\[
    Q^*(N,\delta)=\max\{N,S_N\delta^2\}.
\]
Thus, after optimizing over \(Q\), the fixed-\(N\) objective is
\[
    T_N(\delta)=\max\{N,S_N\delta^2\}\delta^{-\gamma},
    \qquad 0<\delta\le1.
\]

If \(0<\gamma<2\), then \(T_N(\delta)\) is minimized at the balancing point
\[
    S_N(\delta^*)^2=N.
\]
Hence
\[
    (\delta^*)^2
    =
    N\left(\frac{\eta_N}{C_A}\right)^{1/a},
    \qquad
    Q^*(N,\delta^*)=N,
    \qquad
    B^*=\frac{Q^*}{N}=1.
\]
Substituting this \(\delta^*\) into the cost gives
\[
    T(N)
    =
    N^{1-\gamma/2}
    C_A^{\gamma/(2a)}
    \left(
        \varepsilon-\frac{C_0}{N}
    \right)^{-\gamma/(2a)}.
\]
Minimizing the last display over \(N>C_0/\varepsilon\) yields
\[
    N^*
    =
    \frac{C_0}{\varepsilon}
    \frac{2\beta-2+\gamma}{(\beta-1)(2-\gamma)}.
\]
Consequently,
\[
    \delta^*
    \asymp
    d^{-1/2}L R\,
    \mu^{\frac{1}{2(\beta-1)}}
    L_\beta^{-\frac{1}{\beta-1}}
    \varepsilon^{\frac{1}{2(\beta-1)}}
    C_\beta(\gamma),
\]
where
\[
    C_\beta(\gamma)
    =
    \left(
        \frac{\gamma\beta}{2\beta-2+\gamma}
    \right)^{\frac{\beta}{2(\beta-1)}}
    \left(
        \frac{2\beta-2+\gamma}{(\beta-1)(2-\gamma)}
    \right)^{1/2}.
\]
Therefore
\[
    T_{\rm total}^*
    \asymp
    d^{1+\gamma/2}
    L^{2-\gamma}
    R^{2-\gamma}
    L_\beta^{\frac{\gamma}{\beta-1}}
    \mu^{-1-\frac{\gamma}{2(\beta-1)}}
    \varepsilon^{-1-\frac{\gamma}{2(\beta-1)}}
    C_\beta(\gamma).
\]

If \(\gamma\ge2\), then the fixed-\(N\) objective is minimized, up to
leading order, at the largest admissible fidelity level \(\delta^*=1\).
Indeed, on the region \(S_N\delta^2\le N\), one has
\[
    T_N(\delta)=N\delta^{-\gamma},
\]
which decreases with \(\delta\). On the region \(S_N\delta^2\ge N\), one has
\[
    T_N(\delta)=S_N\delta^{2-\gamma},
\]
which is decreasing for \(\gamma>2\) and constant for \(\gamma=2\). Hence
\(\delta^*=1\) is leading-order optimal for all \(\gamma\ge2\).

Substituting \(\delta=1\) into \(Q^*(N,\delta)\), we obtain
\[
    Q^*(N,1)
    =
    \max\left\{
        N,
        \left(
            \frac{C_A}{\varepsilon-C_0/N}
        \right)^{1/a}
    \right\}.
\]
Thus the remaining one-dimensional problem is
\[
    \min_{N>C_0/\varepsilon}
    \max\left\{
        N,
        \left(
            \frac{C_A}{\varepsilon-C_0/N}
        \right)^{1/a}
    \right\}.
\]
The first term is increasing in \(N\), while the second term is decreasing in
\(N\). Therefore the continuous leading-order optimum is attained at the
balancing point
\[
    N
    =
    \left(
        \frac{C_A}{\varepsilon-C_0/N}
    \right)^{1/a}.
\]
Equivalently,
\[
    \varepsilon N-C_0
    =
    C_A N^{1-a}.
\]
Since \(1-a=1/\beta\), the high-precision solution satisfies
\[
    N^*
    \asymp
    C_A^{1/a}\varepsilon^{-1/a}.
\]
At the balancing point, \(Q^*=N^*\), and hence the sequential leading-order
optimizer can be chosen with
\[
    B^*=\frac{Q^*}{N^*}=1.
\]
Therefore
\[
    T_{\rm total}^*
    =
    Q^*
    \asymp
    C_A^{1/a}\varepsilon^{-1/a}.
\]
Substituting
\[
    C_A=\frac{L_\beta^{2/\beta}}{\mu}d^{2a},
    \qquad
    \frac1a=\frac{\beta}{\beta-1},
\]
gives
\[
    N^*
    \asymp
    T_{\rm total}^*
    \asymp
    d^2
    L_\beta^{2/(\beta-1)}
    \mu^{-\beta/(\beta-1)}
    \varepsilon^{-\beta/(\beta-1)},
    \qquad
    B^*=1.
\]

It remains to verify that the residual channel is lower-order. Let
\(A_{\rm main}\) and \(A_{\rm res}\) denote the main noise channel and the
residual channel in \eqref{eq:akhavan_bound_batched}. Then
\[
    \frac{A_{\rm res}}{A_{\rm main}}
    =
    L^2
    L_\beta^{-4/\beta}
    d^{4/\beta-1}
    \delta^{-2(\beta-2)/\beta}
    B^{(\beta-2)/\beta}
    N^{-2/\beta}.
\]
For \(0<\gamma<2\), we have
\[
    B^*=1,
    \qquad
    N^*\asymp\varepsilon^{-1},
    \qquad
    \delta^*\asymp\varepsilon^{1/(2(\beta-1))},
\]
and hence
\[
    \frac{A_{\rm res}}{A_{\rm main}}
    \asymp
    \varepsilon^{1/(\beta-1)}
    \to0.
\]
For \(\gamma\ge2\), the sequential leading-order optimizer has
\[
    \delta^*=1,
    \qquad
    B^*=1,
    \qquad
    N^*\asymp
    \varepsilon^{-\beta/(\beta-1)}.
\]
Therefore
\[
    \frac{A_{\rm res}}{A_{\rm main}}
    \asymp
    \varepsilon^{2/(\beta-1)}
    \to0.
\]
Thus the residual channel is lower-order in the high-precision regime.

Finally, although the sequential leading-order optimizer can be chosen with
\(B^*=1\), for \(\gamma\ge2\) the same total-work order can be redistributed
across batches. Taking \(\delta=1\), any choice satisfying
\[
    N\gtrsim \frac{C_0}{\varepsilon},
    \qquad
    Q=BN\gtrsim C_A^{1/a}\varepsilon^{-1/a}
\]
achieves the same leading accuracy. Hence one may reduce the sequential depth
to \(N\asymp C_0/\varepsilon\) by increasing
\[
    B\asymp
    \frac{C_A^{1/a}\varepsilon^{-1/a}}{C_0/\varepsilon},
\]
without changing the leading total oracle work.

\subsection{Details for regularization and comparison}
\label{app:regularization_comparison}


Let \(f\) be convex and assume that an optimal solution \(x^*\) exists with
\[
    \|x_0-x^*\|\le R.
\]
For \(\mu>0\), define
\[
    f_\mu(x)
    :=
    f(x)+\frac{\mu}{2}\|x-x_0\|^2.
\]
Then \(f_\mu\) is \(\mu\)-strongly convex. Let \(x_\mu^*\) be its minimizer.
Since
\[
    f_\mu(x_\mu^*)\le f_\mu(x^*),
\]
we have
\[
    f_\mu(x_\mu^*)-f^*
    \le
    \frac{\mu}{2}\|x^*-x_0\|^2
    \le
    \frac{\mu R^2}{2}.
\]
Therefore, if
\[
    f_\mu(\hat x)-f_\mu(x_\mu^*)\le \frac{\varepsilon}{2},
\]
then
\[
    f(\hat x)-f^*
    \le
    f_\mu(\hat x)-f^*
    \le
    \frac{\varepsilon}{2}+\frac{\mu R^2}{2}.
\]
Choosing
\[
    \mu=\frac{\varepsilon}{R^2}
\]
gives \(f(\hat x)-f^*\le\varepsilon\). Thus it is enough to solve the
regularized problem to accuracy \(O(\varepsilon)\) with
\[
    \mu\asymp\frac{\varepsilon}{R^2}.
\]

We now substitute this value of \(\mu\) into the strongly convex Tsybakov
baseline used by \textsc{Regularized ZO-SGD}. For \(0<\gamma<2\),
\[
    T_{\rm sc}
    \asymp
    d^{1+\gamma/2}
    L^{2-\gamma}
    R^{2-\gamma}
    L_\beta^{\gamma/(\beta-1)}
    \mu^{-1-\frac{\gamma}{2(\beta-1)}}
    \varepsilon^{-1-\frac{\gamma}{2(\beta-1)}}.
\]
Since
\[
    \mu^{-1-\frac{\gamma}{2(\beta-1)}}
    \asymp
    R^{2+\gamma/(\beta-1)}
    \varepsilon^{-1-\frac{\gamma}{2(\beta-1)}},
\]
we get
\[
    T_{\rm reg}
    \asymp
    d^{1+\gamma/2}
    L^{2-\gamma}
    L_\beta^{\gamma/(\beta-1)}
    R^{4-\gamma+\gamma/(\beta-1)}
    \varepsilon^{-2-\gamma/(\beta-1)}.
\]
Hence the \(\varepsilon\)-exponent is
\[
    p_{\rm reg}=2+\frac{\gamma}{\beta-1}.
\]

For \(\gamma\ge2\),
\[
    T_{\rm sc}
    \asymp
    d^2
    L_\beta^{2/(\beta-1)}
    \mu^{-\beta/(\beta-1)}
    \varepsilon^{-\beta/(\beta-1)}.
\]
Substituting \(\mu\asymp\varepsilon/R^2\) yields
\[
    T_{\rm reg}
    \asymp
    d^2
    L_\beta^{2/(\beta-1)}
    R^{2\beta/(\beta-1)}
    \varepsilon^{-2\beta/(\beta-1)}
    =
    d^2
    L_\beta^{2/(\beta-1)}
    R^{2\beta/(\beta-1)}
    \varepsilon^{-2-\frac{2}{\beta-1}}.
\]

We now compare this with \textsc{Accelerated ZO-SGD}. In the small-batch
regime,
\[
    T_{\rm acc}
    \asymp
    \kappa
    d^{1+\gamma/2}
    R^{1+\gamma/2}
    L^{1/2-\gamma/4}
    \varepsilon^{-1/2-3\gamma/4}
    \left(h_{\rm sm}^*(B)\right)^{-\gamma}.
\]
Here
\[
    h_{\rm sm}^*(B)=\min\{H_1,H_2,H_3(B)\},
\]
where
\[
    H_1\asymp \varepsilon^{3/4},
    \qquad
    H_2\asymp \varepsilon^{1/(\beta-1)},
\]
and
\[
    H_3(B)
    \asymp
    \left(
        \frac{B\varepsilon^{3/2}}{d}
    \right)^{1/(2\beta-2)}.
\]
Since the bound is decreasing in \(h_{\rm sm}^*(B)\), we take the largest
admissible batch size in the small-batch regime:
\[
    B\asymp4\kappa d.
\]
Then
\[
    H_3(B)
    \asymp
    \varepsilon^{3/(4(\beta-1))}
\]
up to \(\kappa\)-dependent constants. Since
\[
    \frac{3}{4(\beta-1)}
    <
    \frac1{\beta-1},
\]
we have \(H_2\lesssim H_3(B)\) for sufficiently small \(\varepsilon\).
Therefore \(H_3(B)\) is not active for the leading \(\varepsilon\)-exponent,
and
\[
    h_{\rm sm}^*(B)
    \asymp
    \varepsilon^s,
    \qquad
    s:=\max\left\{\frac34,\frac1{\beta-1}\right\}.
\]
Consequently,
\[
    T_{\rm acc}
    \asymp 
    d^{1+\gamma/2}
    \varepsilon^{-\left(\frac12+\frac{3\gamma}{4}+\gamma s\right)}.
\]
Thus
\[
    p_{\rm acc}
    =
    \frac12+\frac{3\gamma}{4}+\gamma s.
\]

If \(\beta\le7/3\), then \(s=1/(\beta-1)\). Hence
\[
    p_{\rm acc}
    =
    \frac12+\frac{3\gamma}{4}+\frac{\gamma}{\beta-1},
\]
and
\[
    p_{\rm reg}-p_{\rm acc}
    =
    \frac34(2-\gamma)>0.
\]
Thus \(p_{\rm acc}<p_{\rm reg}\) for all \(0<\gamma<2\).

If \(\beta>7/3\), then \(s=3/4\), so
\[
    p_{\rm acc}
    =
    \frac12+\frac{3\gamma}{2}.
\]
The accelerated method is better when
\[
    \frac12+\frac{3\gamma}{2}
    <
    2+\frac{\gamma}{\beta-1}.
\]
Solving this inequality gives
\[
    \gamma
    <
    \gamma_{\rm crit}(\beta)
    :=
    \frac{3(\beta-1)}{3\beta-5}.
\]
For \(\beta>7/3\), one has
\[
    1<\gamma_{\rm crit}(\beta)<2.
\]
Therefore, for \(\beta>7/3\), \textsc{Accelerated ZO-SGD} has the smaller
\(\varepsilon\)-exponent when
\(\gamma<\gamma_{\rm crit}(\beta)\), while \textsc{Regularized ZO-SGD} has
the smaller \(\varepsilon\)-exponent when
\(\gamma_{\rm crit}(\beta)<\gamma<2\).

Finally, in the regime \(\gamma\ge2\), \textsc{Accelerated ZO-SGD} gives
\[
    T_{\rm acc}
    \asymp 
    d^2\varepsilon^{-2}\left(h_{\rm lg}^*\right)^{-2}.
\]
Using
\[
    h_{\rm lg}^*\asymp \varepsilon^{1/(\beta-1)}
\]
for the leading \((d,\varepsilon)\)-scaling, we get
\[
    T_{\rm acc}
    \asymp
    d^2\varepsilon^{-2-\frac{2}{\beta-1}}.
\]
This matches the leading \((d,\varepsilon)\)-scaling of
\textsc{Regularized ZO-SGD}:
\[
    T_{\rm reg}
    \asymp 
    T_{\rm acc}
    \asymp
    d^2\varepsilon^{-2-\frac{2}{\beta-1}}.
\]
The difference is in sequential depth:
\textsc{Accelerated ZO-SGD} uses
\[
    N_{\rm acc}\asymp R\sqrt L\,\varepsilon^{-1/2},
\]
whereas \textsc{Regularized ZO-SGD} has deterministic sequential scale
\[
    N_{\rm reg}
    \asymp
    dL^2R^4\varepsilon^{-2}.
\]


\end{document}